\DeclareMathOperator\supp{supp}
\DeclareSymbolFont{symbolsA}{U}{txsya}{m}{n}
\DeclareSymbolFont{symbolsC}{U}{txsyc}{m}{n}
\DeclareMathSymbol{\multimapdot}{\mathrel}{symbolsC}{20}
\DeclareMathSymbol{\multimapdotinv}{\mathrel}{symbolsC}{21}
\DeclareMathSymbol{\multimap}{\mathrel}{symbolsA}{40}
\DeclareMathSymbol{\multimapinv}{\mathrel}{symbolsC}{18}
\date{}
\title{A Point-free Perspective on Lax extensions and Predicate liftings}
\author[1]{Sergey Goncharov}
\author[2]{Dirk Hofmann}
\author[1]{Pedro Nora}
\author[1]{Lutz Schröder}
\author[1]{Paul Wild}
\affil[1]{\textit{Friedrich-Alexander-Universität Erlangen-Nürnberg, Erlangen, Germany}}
\affil[ ]{\{sergey.goncharov, pedro.nora, lutz.schroeder, paul.wild\}@fau.de}
\affil[2]{\textit{Center for Research and Development in Mathematics and Applications, University of Aveiro, Portugal}}
\affil[ ]{dirk@ua.pt}
\begin{document}

\maketitle

\begin{abstract}
Lax extensions of set functors play a key role in various
  areas including topology, concurrent systems, and modal logic, while
  predicate liftings provide a generic semantics of modal
  operators. We take a fresh look at the connection between lax
  extensions and predicate liftings from the point of view of
  quantale-enriched relations. Using this perspective, we show in
  particular that various fundamental concepts and results arise
  naturally and their proofs become very elementary.  Ultimately, we
  prove that every lax extension is induced by a class of predicate
  liftings; we discuss several implications of this result.
\end{abstract}

\section{Introduction}

A \emph{lax extension} of a given \(\SET\)-functor~$\ftF$ acts on
relations in a way that is (laxly) compatible on the one hand with the
action of~$\ftF$ on sets and maps, and on the other hand with the
algebraic structure on relations, in particular composition. Lax
extensions are a well-established tool in mathematics and computer
science. For example, they are essential in the theory of
\emph{monoidal topology} to encode the type of a space
\citep{HST14}; and in the theory of coalgebras, they encode the
type of a simulation \citep{HJ04} as well as the type of a
cover modality, in the semantics of Moss-type coalgebraic logics
\citep{MV15}.

A more expressive form of coalgebraic modal logic is based on the notion
of predicate lifting, which allows capturing the standard syntax and
semantics of many forms of modal logic found in the literature in a
uniform fashion \citep{CKP+11}.
The connection between the two approaches to coalgebraic modal logic
is governed by the connection between lax extensions and predicate
liftings.  Special predicate liftings, the so-called \emph{Moss
  liftings}, can be extracted from the Barr
extension~\citep{Lea08,KL09}. This principle has been extended to a
large class of lax extensions~\citep{MV15}, and further to the
quantative setting~\citep{WS20}.
Conversely, lax extensions can be constructed from predicate liftings
using the so-called \emph{Kantorovich extension} \citep{WS20}, even in
quantalic generality~\citep{WS21}. Finally, Moss liftings and the
Kantorovich extension lead to a representation theorem (see
Theorem~\ref{p:60} below) for specific ``fuzzy'' (i.e.~$[0,1]$-valued)
lax extensions \citep{WS20}, which is instrumental in deriving a
quantitative Henessy-Milner-type theorem stating essentially that
behavioural distance on coalgebraic systems coincides with logical
distance under suitable bounding assumptions on the functor~\citep{KonigMikaMichalski18,WildSchroder22,FGH+23}.

Although lax extensions apply to relations, the connection between lax
extensions and predicate liftings has previously been expressed
primarily in the language of functions.  In contrast, the cornerstone
of the present work is the principle that the connection between lax
extensions and predicate liftings is best expressed in the language of
relations. More specifically, we work in the language of
quantale-enriched relations as a way of unifying the work developed in
the classical setting and the emerging work in quantitative settings.

\subsection{Main contributions}

The main contribution of this paper is to show that every \emph{quantale-valued} lax extension of an \emph{arbitrary} \(\SET\)-functor is induced by its \emph{class} of Moss liftings.
This generalizes results in the literature  that rely on specific properties of the unit interval and on cardinality constraints on \(\SET\)-functors \citep{WS20,WS21,MV15}.
By tackling this problem from the point of view of relations instead of functions, we obtain elegant proofs and new insights.
For instance, we introduce the notion of predicate lifting for a lax extension which leads to a simple description of Moss lifting that goes beyond the realm of accessible functors and is independent of functor presentations, which feature centrally in previous approaches \citep{Lea08,MV15,WS20}.

The representation result obtained here explains the importance of the canonical extensions of generalized monotone neighborhood functors in the process of constructing quantale-valued lax extensions (in analogy to the two-valued case~\cite{MV15});
it is a stepping stone to connecting the coalgebraic approaches to behavioural distance via quantale-valued lax extensions and via liftings to categories of quantale-enriched categories, respectively \citep{GHN+23},
and similarly to connecting -- in quantalic generality -- the approaches to coalgebraic logics via lax extensions and via predicate liftings;
 and it helps pave the way to obtaining expressive (monotone) quantale-valued coalgebraic modal logics \citep{GHN+23,FGH+23}.

\subsection{Roadmap}

After briefly reviewing the necessary background in Section~\ref{sec:q-rel+cat}, we show in Section~\ref{sec:lax->pl} how to extract predicate liftings from a lax extension in a canonical way.
This leads to the notion of predicate lifting of a lax extension, which generalizes the notion of Moss lifting.
We characterize the predicate liftings of a lax extension as the ones that obey a Yoneda-type formula involving the lax extension and conclude that the Moss liftings correspond to special representable \(\V\)-functors.

In Sections~\ref{sec:pl->lax} and \ref{sec:small-lax}, we revisit the
Kantorovich extension \citep{WS20,WS21}.  The main technical
contributions of these sections are connected with one of the main
results of \citet{WS20}:
\begin{theorem}
  \label{p:60}
  Every finitarily separable fuzzy lax extension of a \(\SET\)-functor is induced by its set of Moss liftings.
\end{theorem}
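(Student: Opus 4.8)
The plan is to read the representation as the equality $L=L_\Lambda$, where $\Lambda$ is the set of Moss liftings of the given lax extension $L$ and $L_\Lambda$ is its Kantorovich extension, and to obtain the two inequalities from the two halves of the theory developed in Sections~\ref{sec:lax->pl} and~\ref{sec:pl->lax}. For the inequality $L\le L_\Lambda$ I would invoke directly the characterisation from Section~\ref{sec:lax->pl}: since every Moss lifting is a predicate lifting of $L$, it obeys the Yoneda-type formula, which is precisely the compatibility making it an admissible test in the infimum defining the Kantorovich extension. As $L_\Lambda$ is by design the largest lax extension turning each member of $\Lambda$ into a predicate lifting, and $L$ is one such, this direction is essentially formal.

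The substance is the reverse inequality $L_\Lambda\le L$, and here I would argue point-free, fixing a $\V$-relation $r\colon X\relto Y$ together with $\xi\in\ftF X$ and $\eta\in\ftF Y$ and exhibiting a single Moss lifting whose contribution to the Kantorovich infimum already sits below $Lr(\xi,\eta)$. Finitarity lets me factor $\xi=\ftF\iota(\xi_0)$ through a finite $\iota\colon X_0\hookrightarrow X$, so it suffices to test against the finite ``Yoneda family'' of predicates indexed by $X_0$, namely $p_{x_0}=X_0(x_0,-)$ and $q_{x_0}=r(x_0,-)$, assembled into maps $\langle\vec p\rangle\colon X\to\V^{X_0}$ and $\langle\vec q\rangle\colon Y\to\V^{X_0}$. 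Taking the representing element $\Phi=\ftF\langle\vec p\rangle(\xi)$ and the induced Moss lifting $\lambda^\Phi$, the laxness of $L$ on the canonical $\V$-category structure $D$ of $\V^{X_0}$ gives $\lambda^\Phi_X(\vec p)(\xi)=LD(\ftF\langle\vec p\rangle\xi,\ftF\langle\vec p\rangle\xi)\ge k$ by reflexivity, while compatibility of $L$ with functions together with lax functoriality rewrites $\lambda^\Phi_Y(\vec q)(\eta)=LD(\ftF\langle\vec p\rangle\xi,\ftF\langle\vec q\rangle\eta)$ as $L(\langle\vec q\rangle^{*}\cdot D\cdot\langle\vec p\rangle_{*})(\xi,\eta)$. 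On the finite support $X_0$ the relation $\langle\vec q\rangle^{*}\cdot D\cdot\langle\vec p\rangle_{*}$ restricts back to $r$, so this value is at most $Lr(\xi,\eta)$; since $\lambda^\Phi_X(\vec p)(\xi)\ge k$, the internal hom $[\lambda^\Phi_X(\vec p)(\xi),\lambda^\Phi_Y(\vec q)(\eta)]$ is bounded by $Lr(\xi,\eta)$, as required.

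The main obstacle I anticipate is not any single calculation but the correct placement of the hypotheses. Finitarity is what licenses the reduction to a finite index set $X_0$, and hence to a genuine finite-arity lifting $\lambda^\Phi$. Separability, on the other hand, is what should guarantee that this representable $\lambda^\Phi$ is in fact a predicate lifting of $L$---equivalently, that it obeys the Yoneda-type formula of Section~\ref{sec:lax->pl} and so qualifies as one of the Moss liftings in $\Lambda$---and that the representable family is tight enough to realise $r$ exactly on the support rather than merely to dominate it, so that no slack survives the interchange with the quantalic infimum. I would therefore isolate, as the technical heart, the verification that the evaluation relations on $\V^{X_0}$ detect $L$ finitarily and that the extracted representables are admissible; once this density is established the laxness axioms transport it along $\ftF$ and close the gap, yielding $L_\Lambda\le L$ and, with the formal inequality above, the desired representation $L=L_\Lambda$.
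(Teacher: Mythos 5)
Your formal direction \(\eF\le\eF^{M}\) is fine and matches Proposition~\ref{p:80}/Corollary~\ref{p:27}. The reverse inequality, however, has the Kantorovich formula oriented backwards. By formula~\eqref{p:44}, every clause of \(\eF^{\mu}r\) evaluated at \((\fx,\fy)\) has the shape \(\hom\bigl(\mu(g)(\fy),\mu(g\cdot r)(\fx)\bigr)\) for a test \(g\colon Y\relto\kappa\) on the \emph{codomain}, pulled back along \(r\). Your pair --- the Yoneda predicates \(p\) on \(X\) with \(p^{\flat}=\iota^{\circ}\) and \(q\) on \(Y\) with \(q^{\flat}=(r\cdot\iota)^{\circ}\) --- satisfies \(q^{\flat}=p^{\flat}\cdot r^{\circ}\), so \((p,q)\) is an admissible test pair for \(r^{\circ}\), not for \(r\); your bound \(\hom\bigl(\lambda^{\Phi}(p)(\fx),\lambda^{\Phi}(q)(\fy)\bigr)\le\eF r(\fx,\fy)\) therefore shows \(\eF^{M}(r^{\circ})(\fy,\fx)\le\eF r(\fx,\fy)\), i.e.\ \(\eF^{M}\le\deF\), the \emph{dual} extension. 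Since a lax extension need not coincide with its dual, together with the formal direction you only obtain \(\eF\le\eF^{M}\le\deF\), which does not close the loop. The paper's Lemma~\ref{p:19} runs the argument on the other side: for \(\fy\in\ftF Y\) and a mono \(i\colon Y\to\kappa\), set \(\fk=\ftF i(\fy)\) and test with \(g=i\), \(g\cdot r=i\cdot r\); then \(\mu^{\fk}(i)\multimapdot\mu^{\fk}(i\cdot r)\le\fy^{\circ}\multimapdot(\fy^{\circ}\cdot\eF r)\), and Proposition~\ref{p:28} (infimum over all \(\fy\in\ftF Y\)) recovers \(\eF r\) exactly, giving Theorem~\ref{d:thm:1}.

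Second, you place the hypotheses incorrectly. The exact factorisation \(\fx=\ftF\iota(\fx_{0})\) through a finite subset is \(\omega\)-accessibility of \(\ftF\) --- the hypothesis of the \emph{simpler} version mentioned right after the theorem --- and in the corrected (codomain-side) argument the reduction must anyway be applied to the representing element \(\fk\in\ftF\kappa\) of the Moss lifting, not to the argument \(\fx\) (cf.\ Proposition~\ref{p:100} and Corollary~\ref{p:23}). Under finitary separability no exact factorisation is available: one only has \emph{density} of \(\ftF_{\omega}X\) in \((\ftF X,\eF 1_{X})\), and replacing factorisation by approximation is precisely the content of Lemma~\ref{p:29}, namely \(\mu^{\fk}(\varphi)=\mu^{\fk}(1_{\kappa})\cdot i\cdot i^{\circ}\cdot\eF\varphi\), followed by the infimum over the finitary approximants \(i(\fl)\), \(\fl\in\ftF_{\omega}\kappa\), in the theorem preceding Corollary~\ref{p:33}. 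This is exactly the step you defer as ``the technical heart'' without an argument, so the proposal proves at best the finitary special case. A smaller misattribution: separability is not needed to make \(\lambda^{\Phi}\) admissible --- every Moss lifting is induced by \(\eF\) by construction (Lemma~\ref{p:52}, Proposition~\ref{p:110}); its sole role is the density reduction from the class of all Moss liftings to a set of finite-ary ones.
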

\noindent A simpler version of this result states that every fuzzy
lax extension of a \emph{finitary} \(\SET\)-functor is induced by
its set of Moss liftings.  Intuitively, a \emph{finitarily separable}
lax extension is a lax extension under which the functor can be
approximated by its finitary part.

In this regard, we show in Section~\ref{sec:pl->lax} that \emph{every}
lax extension is induced by its \emph{class} of
\emph{infinitary} Moss liftings, and in Section~\ref{sec:small-lax}
that the role of \(\lambda\)-accessibility is to ensure that it is
sufficient to consider a \emph{set} of Moss liftings of arity less
than \(\lambda\). Consequently, we obtain that every lax extension of
an arbitrary functor is an initial lift of canonical extensions of
generalized monotone neighbourhood functors (Corollary~\ref{p:35}), a
generalization of Theorem~\ref{p:60} above -- whose proof was tied to
the unit interval -- to arbitrary (commutative) quantales
(Corollary~\ref{p:33}), as well as a generalization of the following
key result \citep{MV15} to arbitrary (commutative) quantales and
functors (Corollary~\ref{d:cor:1}):
\begin{theorem}
  \label{p:36}
  A finitary \(\SET\)-functor admits a lax extension to \(\REL\) that preserves identities if and only if it admits a separating set of monotone predicate liftings.
\end{theorem}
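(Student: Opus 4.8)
The plan is to establish the two implications via the two constructions developed in Sections~\ref{sec:lax->pl} and~\ref{sec:pl->lax}, treating them as mutually inverse through the representation of a lax extension by its Moss liftings. For the implication ``separating monotone liftings $\Rightarrow$ identity-preserving lax extension'' I would pass through the Kantorovich extension; for the converse I would extract the Moss liftings of the given extension and use finitarity to cut the resulting class down to a set.

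So suppose first that $\Lambda$ is a separating set of monotone predicate liftings of $\ftF$, and let $\eF_\Lambda$ be its Kantorovich extension. By Section~\ref{sec:pl->lax} this is a lax extension, monotonicity of the liftings being exactly what secures laxness; and since laxness already gives $1_{\ftF X}\le\eF_\Lambda(1_X)$, only the reverse inequality is at issue. Unfolding the Kantorovich formula shows that $\eF_\Lambda(1_X)$ is nothing but the $\V$-relation obtained by pulling the identity on $\prod_{\lambda,p}\V$ back along the cone $i=(\lambda_X(p))_{\lambda,p}\colon\ftF X\to\prod_{\lambda,p}\V$, that is, $i^{*}\cdot i_{*}$. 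By the elementary relational calculus of the introduction, $i^{*}\cdot i_{*}$ equals the discrete structure $1_{\ftF X}$ precisely when $i$ is an embedding, and this embedding condition is exactly the separation of $\Lambda$. Hence $\eF_\Lambda(1_X)=1_{\ftF X}$, so $\eF_\Lambda$ preserves identities.

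For the converse, let $\eF$ be a lax extension of $\ftF$ that preserves identities. Section~\ref{sec:lax->pl} assigns to $\eF$ its class of Moss liftings, which are monotone predicate liftings, and by the representation theorem $\eF$ is recovered as their Kantorovich extension. Here I would invoke the hypothesis that $\ftF$ is finitary: by the accessibility analysis of Section~\ref{sec:small-lax}, for an $\omega$-accessible functor the Moss liftings of finite arity already represent $\eF$, so the class may be replaced by a \emph{set} $\Lambda$. Reading the computation of the previous paragraph backwards, the chain $1_{\ftF X}=\eF(1_X)=\eF_\Lambda(1_X)=i^{*}\cdot i_{*}$ forces the cone $i$ to be an embedding for every set $X$, which is precisely the assertion that $\Lambda$ is separating.

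The step I expect to be the main obstacle is the passage from a class to a set in the converse direction: showing that finitarity of $\ftF$ permits retaining only the Moss liftings of finite arity without destroying the representation. This is exactly where the point-free, $\V$-relational formulation earns its keep, since presenting the Moss liftings as representable $\V$-functors turns the accessibility argument into a statement about factoring cones through finite stages, whereas in the ``language of functions'' the corresponding step was entangled with explicit functor presentations. By contrast, the equivalence between separation and preservation of identities is, once the representation is available, just the single diagonal computation above read in its two directions, and specialising the quantale to $\V=\two$ recovers the classical statement verbatim.
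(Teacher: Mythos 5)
Your second implication is essentially the paper's own argument and is correct: Theorem~\ref{d:thm:1} represents \(\eF\) by its class of Moss liftings, Proposition~\ref{p:100} uses finitarity (\(\omega\)-accessibility) to cut this class down to the \emph{set} of Moss liftings of finite arity, and from \(1_{\ftF X}=\eF 1_X=\eF^{M_\omega}1_X\) one indeed deduces that the cone \((\overline{\mu})_{\mu}\) is jointly injective (the paper phrases this step via separatedness of \((\DD{\kappa}X,\eDD{\kappa}1_X)\) from Example~\ref{p:47}(\ref{d:exs:1}), which is the same computation). The first implication, however, contains a genuine error: it is \emph{not} true that the Kantorovich extension of a separating set preserves identities, i.e.\ that joint injectivity of your cone \(i\) forces \(i^{*}\cdot i_{*}=1_{\ftF X}\). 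Injectivity of \(i\) only makes the initial structure \(\eF_\Lambda 1_X\) \emph{separated}: for \(\fx\neq\fy\) it rules out \(k\le\eF_\Lambda 1_X(\fx,\fy)\) and \(k\le\eF_\Lambda 1_X(\fy,\fx)\) holding simultaneously, but it does not force these values to be \(\bot\). The paper's own Example~\ref{p:47}(\ref{p:45}) is a counterexample: the lifting \(\Diamond\) of the powerset functor is separating, yet \(\widehat{\ftP}^\Diamond 1_X\) relates \(A\) to \(B\) iff \(B\subseteq A\), a non-discrete order, so \(\widehat{\ftP}^\Diamond\) does not preserve identities. Your phrase ``\(i^{*}\cdot i_{*}\) equals the discrete structure precisely when \(i\) is an embedding'' conflates a mono cone (which is all that separation gives) with a fully faithful map whose \emph{domain} carries the discrete structure; only the latter yields \(i^{*}\cdot i_{*}=1_{\ftF X}\), and nothing in the hypotheses provides it.

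The missing step can be supplied for \(\V=\two\), which is the setting of Theorem~\ref{p:36}: having shown \((\ftF X,\eF_\Lambda 1_X)\) separated, pass to the symmetrisation \(\seF_\Lambda\) of \(\eF_\Lambda\) (Section~\ref{sec:prel-lax}). Since \(1_X^\circ=1_X\), one has \(\seF_\Lambda 1_X=\eF_\Lambda 1_X\cap{(\eF_\Lambda 1_X)}^\circ\), the symmetric part of a preorder containing the diagonal, which by the anti-symmetry just established equals \(1_{\ftF X}\); so \(\seF_\Lambda\) is the desired identity-preserving lax extension (equivalently, one may enlarge \(\Lambda\) by the dual liftings). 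Note that this repair is specific to \(\two\): over a general quantale, separatedness only forbids \(k\le\seF 1_X(\fx,\fy)\) off the diagonal rather than forcing the value \(\bot\), which is precisely why the paper states its generalisation, Corollary~\ref{d:cor:1}, with ``\((\ftF X,\eF 1_X)\) separated'' in place of ``identity-preserving'', and recovers the classical statement through the remark preceding that corollary.
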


\section{Preliminaries}
\label{sec:q-rel+cat}

We briefly review the theory of quantale-enriched categories from the point of view of quantale-enriched relations, as well as lax extensions and predicate liftings, and establish some notation and basic results.
We warn the reader that there are several approaches to these topics in the literature, often with particular notations and nomenclature;
we follow the conventions of \citet{HST14}.

\subsection{Quantale-enriched relations and categories}

A \df{quantale}, more precisely a commutative unital quantale, is a complete
lattice $\V$ that carries the structure of a commutative monoid
\((\V,\otimes,k)\) such that for every $u \in \V$ the map $u \otimes - \colon \V
\to \V$ preserves suprema. Therefore, in a quantale every map $u \otimes -
\colon \V \to \V$ has a right adjoint $\hom(u,-) \colon \V \to \V$, which is
characterized by
\[
  u \otimes v \le w \iff v \le \hom(u,w),
\]
for all $v, w \in \V$. A quantale is \df{non-trivial} if the least
element \(\bot\) of \(\V\) does not coincide with the greatest element
\(\top\). Moreover, a quantale is \df{integral} if \(\top\) is the
unit of the monoid operation \(\otimes\) of \(\V\), which we refer to
as tensor or multiplication.

\begin{remark}
  In categorical parlance, a quantale is a commutative monoid in the monoidal category of complete lattices and suprema-preserving maps.
\end{remark}

\begin{examples}
  \label{p:70}
  Quantales are common in mathematics and computer science.
  \begin{enumerate}
  \item Every frame becomes a quantale with \(\otimes = \wedge\) and
    \(k = \top\).
  \item \label{p:72} Every left continuous \(t\)-norm \citep{AFS06} defines a
    quantale on the unit interval equipped with its natural order.  The
      following are some typical examples.
    \begin{enumerate}
      \item The product \(t\)-norm has tensor given by multiplication and
        \[
        	\hom(u,v) =
        	\begin{cases}
        		\min(\frac{v}{u},1) & \text{if $u \neq 0$,} \\
        		1                   & \text{otherwise.}
        	\end{cases}
        \]
        Via the map \([0,\infty]\to[0,1],u \mapsto e^{-u}\), this
        quantale is isomorphic to the quantale $[0,\infty]$ of
        extended non-negative real numbers used by \citet{Law73} to
        define (generalized) metric spaces.
      \item The infimum \(t\)-norm has tensor given by infimum and
        \[
	  \hom(u,v) =
	  \begin{cases}
	  	1 & \text{if $u \le v$,} \\
	  	v & \text{otherwise.}
	  \end{cases}
        \]
      \item \label{p:73} The \L{}ukasiewicz \(t\)-norm has tensor
        given by $u \otimes v = \max(0,u+v-1)$ and
        $\hom(u,v) = \min(1,1-u+v)$.  This quantale is isomorphic to
        the quantale that is used implicitly in the usual treatment of
        $1$-bounded metric spaces.  More concretly, this quantale is
        isomorphic to the quantale based on the unit interval equipped
        with the dual of the natural order and with tensor given by
        truncated addition, $u\oplus v = \min(u+v,1)$; the map
        \([0,1] \to [0,1] \colon u \mapsto 1-u\) provides an
        isomorphism.
    \end{enumerate}
  \item Every commutative monoid \((M, \cdot, e)\) generates a quantale
    structure on \((\ftP M, \bigcup)\), the free quantale on~\(M\). The
    tensor \(\otimes\) on \(\ftP M\) is defined by
    \[
      A \otimes B = \{ a \cdot b \mid a \in A \text{ and } b \in B\},
    \]
    for all \(A,B \subseteq M\). The unit of this multiplication is the set
    \(\{e\}\).
  \end{enumerate}
\end{examples}
For a quantale $\V$ and sets $X$, $Y$, a \df{$\V$-relation} -- an enriched
relation -- from $X$ to $Y$ is a map $X \times Y \to \V$; we let
$X \relto Y$ denote the space of such maps, and in particular write $r\colon X\relto Y$ to indicate that~$r$ is a $\V$-relation from~$X$ to~$Y$. As for ordinary relations, $\V$-relations can be composed via
``matrix multiplication''. That is, for $r \colon X \relto Y$ and
$s \colon Y \relto Z$, the composite $s \cdot r \colon X \relto Z$ is calculated
pointwise by
\[
  (s \cdot r)(x,z)=\bigvee_{y \in Y} r(x,y) \otimes s(y,z),
\]
for $x \in X$, $z \in Z$. The collection of all sets and
\(\V\)-relations between them forms the category \(\Rels{\V}\). For
each set \(X\), the identity morphism on \(X\) is the \(\V\)-relation
\(1_X \colon X \relto X\) that sends every diagonal element to~\(k\)
and all the others to~\(\bot\).

\begin{examples}
  The category of relations enriched in the two-element frame is the usual
  category \(\REL\) of sets and relations.  Relations enriched in
  left continuous \(t\)-norms are often called \emph{fuzzy} or
  \emph{quantitative} relations.
\end{examples}

We can compare \(\V\)-relations of type \(X \relto Y\) using the \df{pointwise order},
\[
  r \leq s \iff \forall (x,y) \in X \times Y,\, r(x,y) \leq s(x,y).
\]
Every hom-set of \(\Rels{\V}\) becomes a complete lattice when equipped with
this order, and an easy calculation shows that \(\V\)-relational composition
preserves suprema in each variable. Therefore, \(\Rels{\V}\) is a quantaloid and
enjoys pleasant properties inherited from \(\V\). In particular, precomposition
and poscomposition with a \(\V\)-relation \(r \colon X \relto Y \) define maps
with right adjoints that compute Kan lifts and extensions, respectively. The
\df{lift of a \(\V\)-relation} \(s \colon Z \relto Y\) along
\(r \colon X \relto Y\) is the \(\V\)-relation
\(r \multimapdot s \colon Z \relto X\) defined by the property
\[
  r \cdot t \leq s \iff t \leq r \multimapdot s,
\]
for every \(t \colon Z \relto X\),
\begin{displaymath}
  \begin{tikzcd}[row sep=huge, column sep=huge]
    Z & {} \\
    X & Y
    \ar[from=1-1, to=2-1, dotted, "r \multimapdot s"]
    \ar[from=1-1, to=2-1, bend right=60, "t"']
    \ar[from=1-1, to=2-1, bend right=30, phantom, "\scaleobj{0.7}{\leq}"']
    \ar[from=1-1, to=2-2, "s"] \ar[from=2-1, to=2-2, "r"']
    \ar[from=2-1, to=1-2, near start, phantom, "\scaleobj{0.7}\leq"].
  \end{tikzcd}
\end{displaymath}
We can compute lifts explicitly as
\[
  (r \multimapdot s) (z,x) = \bigwedge_{y \in Y} \hom(r(x,y), s(z,y)).
\]

Dually, the \df{extension of a \(\V\)-relation} \(s \colon Y \relto Z\) along
\(r \colon Y \relto X\) is the \(\V\)-relation
\(s \multimapdotinv r \colon Y \relto Z\) defined by the property
\[
  t \cdot r \leq s \iff t \leq s \multimapdotinv r,
\]
for every \(t \colon Y \relto Z\),
\begin{displaymath}
  \begin{tikzcd}[row sep=huge, column sep=huge]
    Z & {} \\
    X & Y
    \ar[from=2-1, to=1-1, dotted, "s \multimapdotinv r"']
    \ar[from=2-1, to=1-1, bend left=60, "t"]
    \ar[from=2-1, to=1-1, bend left=30, phantom, "\scaleobj{0.7}{\leq}"]
    \ar[from=2-2, to=1-1, "s"'] \ar[from=2-2, to=2-1, "r"]
    \ar[from=2-1, to=1-2, near start, phantom, "\scaleobj{0.7}\leq"].
  \end{tikzcd}
\end{displaymath}
Elementwise, we obtain
\[
  (s \multimapdotinv r) (x,z) = \bigwedge_{y \in Y} \hom(r(y,x), s(y,z)).
\]

Being a quantaloid, \(\Rels{\V}\) is a \emph{2-category} and, therefore, it
makes sense to talk about adjoint \(\V\)-relations; as usual,
\(l \colon X \relto Y\) is left adjoint to \(r \colon Y \relto X\), written
\(l \dashv r\), if \(l \cdot r \leq 1_Y\) and \(1_X \leq r \cdot l \).

\begin{proposition}
  \label{p:2}
  Consider \(\V\)-relations \(p \colon W \relto Y\), \(q \colon V \relto Y\),
  \(r, r' \colon X \relto Y\) and \(s \colon Y \relto Z\).
  \begin{enumerate}
  \item \label{p:12}
    \(r \leq r' \implies q \multimapdot r \leq q \multimapdot r'\).
  \item \label{p:4}
    \(r \leq r' \implies r' \multimapdot q \leq r \multimapdot q\).
  \item \label{p:8}
    \((p \multimapdot q) \cdot (q \multimapdot r) \leq p \multimapdot r\)
  \item \label{p:14}
    \(q \multimapdot r \leq (s \cdot q) \multimapdot (s \cdot r)\).
  \end{enumerate}
\end{proposition}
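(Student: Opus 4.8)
The plan is to prove all four items uniformly from the defining adjunction $r \cdot (-) \dashv r \multimapdot (-)$, without ever unfolding the elementwise $\hom$-formula. The one fact that does essentially all the work is the counit of this adjunction: instantiating the characterisation $r \cdot t \leq s \iff t \leq r \multimapdot s$ at $t = r \multimapdot s$ gives
\[
  r \cdot (r \multimapdot s) \leq s
\]
for every pair $r$, $s$ of matching type. I will combine this with two structural properties that hold because $\Rels{\V}$ is a quantaloid, namely that $\V$-relational composition is monotone in each variable and associative.

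Item~\ref{p:12} is immediate: $q \multimapdot (-)$ is a right adjoint, hence monotone, so $r \leq r'$ yields $q \multimapdot r \leq q \multimapdot r'$. For item~\ref{p:4} I would transpose across the adjunction for $r$. Since composition is monotone, $r \leq r'$ gives $r \cdot (r' \multimapdot q) \leq r' \cdot (r' \multimapdot q)$, and the counit bounds the right-hand side by $q$; thus $r \cdot (r' \multimapdot q) \leq q$, and transposing this back through $r \cdot (-) \dashv r \multimapdot (-)$ delivers $r' \multimapdot q \leq r \multimapdot q$.

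The two composite laws are handled the same way, by transposing and then applying the counit. For item~\ref{p:8} it suffices to check the transpose $p \cdot \bigl[(p \multimapdot q) \cdot (q \multimapdot r)\bigr] \leq r$; by associativity the left-hand side equals $\bigl[p \cdot (p \multimapdot q)\bigr] \cdot (q \multimapdot r)$, and two applications of the counit, interleaved with monotonicity of composition, bound it by $q \cdot (q \multimapdot r) \leq r$. Transposing returns the stated inequality. For item~\ref{p:14} I would likewise reduce to $(s \cdot q) \cdot (q \multimapdot r) \leq s \cdot r$; associativity rewrites this as $s \cdot \bigl[q \cdot (q \multimapdot r)\bigr]$, which is at most $s \cdot r$ by the counit $q \cdot (q \multimapdot r) \leq r$ together with monotonicity of $s \cdot (-)$.

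I do not expect a genuine obstacle here, since every item is a formal consequence of the 2-categorical structure of $\Rels{\V}$. The only points that require care are the contravariance in item~\ref{p:4}—this is precisely where the antitonicity of $\hom$ in its first argument is hidden—and the bookkeeping of composition order and of the types of the various $\V$-relations, so that each transposition is applied to the correct instance of the adjunction. A parallel argument directly from the formula $r \multimapdot s\,(z,x) = \bigwedge_{y} \hom(r(x,y), s(z,y))$, using monotonicity of $\hom$ in its second variable and antitonicity in its first, is available as a sanity check, but the adjoint calculus above is shorter and more transparent.
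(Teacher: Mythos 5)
Your proof is correct and follows exactly the route the paper intends: its proof of Proposition~\ref{p:2} simply asserts that all claims follow from the universal properties of lifts and extensions, and your adjoint calculus (counit $r \cdot (r \multimapdot s) \leq s$ plus transposition, monotonicity, and associativity of composition) is precisely the spelled-out version of that argument. All four transpositions are type-correct and the inequalities check out, so there is nothing to amend.
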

\begin{proof}
  All claims follow straightforwardly from the universal properties of lifts and
  extensions.
\end{proof}

If \(\V\) is non-trivial, we can see \(\Rels{\V}\) as an extension of \(\SET\)
through the faithful functor \({(-)}_\circ \colon \SET \to \Rels{\V}\) that acts
as identity on objects and interprets a function \(f \colon X \to Y\) as the
\(\V\)-relation \(f_\circ \colon X \relto Y\) that sends every element of the
graph of \(f\) to \(k\) and all the others to \(\bot\). To avoid unnecessary use
of subscripts usually we write \(f\) instead of \(f_\circ\).

The canonical isomorphism \(X \times Y \simeq Y \times X\) in \(\SET\) induces a
contravariant involution in \(\Rels{\V}\)
\[
  {(-)}^\circ \colon {\Rels{\V}}^\op \longrightarrow \Rels{\V}
\]
that maps objects identically and sends a \(\V\)-relation \(r \colon X \relto Y\)
to the \(\V\)-relation \(r^\circ \colon Y \relto X\) defined by
\(r^\circ(y,x) = r(x,y)\), the \df{converse} of \(r\).

\begin{remark}
  \label{p:40}
  The converse of a function \(f \colon X \to Y\) yields an adjunction
  \(f \dashv f^\circ\) in \(\Rels{\V}\) (and this property of
  functions distinguishes them among \(\V\)-relations precisely when
  the quantale \(\V\) is integral and lean
  \citep[Proposition~III.1.2.1]{HST14}). For every set \(Z\), this
  adjunction extends to the following ones:
  \begin{enumerate}
    \item \label{p:41} \( f \cdot (-) \dashv f^\circ \cdot (-) \colon
      \Rels{\V}(Z,X) \to \Rels{\V}(Z,Y)\);
    \item \((-) \cdot f^\circ \dashv (-) \cdot f \colon
      \Rels{\V}(X,Z) \to \Rels{\V}(Y,Z)\).
  \end{enumerate}
\end{remark}

The next results collect some useful facts about the interplay
between extensions, functions, relations, and involution.

\begin{proposition}
  \label{p:5}
  Let \(r \colon X \relto Z\) and \(s \colon Y \relto Z\) be
  \(\V\)-relations, and let \(f \colon A \to X\) and
  \(g \colon B \to Y\) be functions. Then the following holds.
  \begin{enumerate}
    \item \label{p:39}
      \({(s \multimapdot r)}^\circ = r^\circ \multimapdotinv s^\circ\);
    \item \label{p:42} \(g^{\circ}\cdot(s\blackright r)=(s\cdot g)\blackright r\);
    \item \label{p:67} \((s\blackright r)\cdot f=s\blackright(r\cdot f)\).
  \end{enumerate}
\end{proposition}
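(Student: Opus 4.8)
The plan is to prove each of the three identities directly from the universal properties of Kan lifts and extensions, together with the adjunctions $f \dashv f^\circ$ that functions enjoy in $\Rels{\V}$ (Remark~\ref{p:40}). Throughout, the strategy is the same: to show two $\V$-relations are equal, I would show each satisfies the universal property characterising the other, i.e.\ I would test against an arbitrary $\V$-relation $t$ and chase the defining adjunction inequalities back and forth. This avoids any elementwise computation with the explicit $\hom$-formulas.

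For part~\ref{p:39}, the claim ${(s \multimapdot r)}^\circ = r^\circ \multimapdotinv s^\circ$ relates the lift to the extension via converse. I would start from the defining property of the lift, $r \cdot t \leq s \iff t \leq s \multimapdot r$, and apply the contravariant involution ${(-)}^\circ$, using that it reverses composition, $(r \cdot t)^\circ = t^\circ \cdot r^\circ$, and is order-preserving (indeed an order isomorphism on each hom-lattice). This turns the left-hand inequality into $t^\circ \cdot r^\circ \leq s^\circ$, which by the defining property of the extension is equivalent to $t^\circ \leq s^\circ \multimapdotinv r^\circ$. Comparing with $t \leq s \multimapdot r$, and noting $t^\circ \leq s^\circ \multimapdotinv r^\circ \iff t \leq {(s^\circ \multimapdotinv r^\circ)}^\circ$, I obtain that ${(s \multimapdot r)}$ and ${(s^\circ \multimapdotinv r^\circ)}^\circ$ have the same universal property, hence are equal; taking converses once more yields the stated form after matching the roles of $r,s$ and $r^\circ, s^\circ$.

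For parts~\ref{p:42} and~\ref{p:67}, I would exploit the adjunctions of Remark~\ref{p:40} that let composition with a function (or its converse) slide across a Kan lift. For~\ref{p:67}, $(s\blackright r)\cdot f=s\blackright(r\cdot f)$, I would test against $t \colon Z \relto A$: by the lift property, $t \leq s \blackright (r \cdot f) \iff (r \cdot f) \cdot t \leq s$, while $t \leq (s \blackright r) \cdot f$ should be rephrased using the adjunction $(-)\cdot f^\circ \dashv (-)\cdot f$ so that precomposition by $f$ on the outside is transferred onto $r$; reassociating $(r\cdot f)\cdot t = r \cdot (f \cdot t)$ then lines the two conditions up. Part~\ref{p:42}, $g^{\circ}\cdot(s\blackright r)=(s\cdot g)\blackright r$, is the dual pattern, now using the adjunction $g \cdot (-) \dashv g^\circ \cdot (-)$ from item~\ref{p:41}: testing against $t$, the condition $r \cdot t \leq s \cdot g$ is moved across the adjunction to $r \cdot (\text{something}) \leq s$, matching the lift defining $g^\circ \cdot (s \blackright r)$.

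The main obstacle I anticipate is purely bookkeeping: keeping the variances straight so that the lift/extension duality of~\ref{p:39} is applied in the correct direction, and making sure in~\ref{p:42} and~\ref{p:67} that I invoke the \emph{correct} one of the two adjunctions in Remark~\ref{p:40}, since $f$ appears on different sides of the composite. There is no deep difficulty here — each identity is a short adjunction chase — so the real work is presenting the equivalences cleanly rather than overcoming any genuine conceptual hurdle. As a sanity check, I would optionally verify~\ref{p:42} or~\ref{p:67} against the explicit formula $s \blackright r\,(z,x) = \bigwedge_{y} \hom(r(x,y), s(z,y))$, since the slickness of the point-free argument is precisely the payoff the paper is advertising.
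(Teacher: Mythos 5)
Your overall strategy is exactly the paper's: the paper proves all three identities ``by uniqueness of adjoints'' (for~\ref{p:42} it observes that both \((s\cdot g)\blackright-\) and \(g^{\circ}\cdot(s\blackright-)\) are right adjoint to \((s\cdot g)\cdot-\)), and your testing-against-\(t\) chase is just this same uniqueness argument spelled out. However, as written your proposal consistently misquotes the defining property of the lift, and this is precisely the variance slip you flagged as a risk: with the paper's convention, \(s \multimapdot r\) is characterised by \(s \cdot t \leq r \iff t \leq s \multimapdot r\), \emph{not} by \(r \cdot t \leq s\). This is not cosmetic. In part~\ref{p:39} your chase with the swapped convention produces \((s^\circ \multimapdotinv r^\circ)\) rather than \(r^\circ \multimapdotinv s^\circ\), and the phrase ``after matching the roles of \(r,s\)'' papers over the discrepancy; with the correct convention the chase is direct: \(t \leq s \multimapdot r \iff s\cdot t \leq r \iff t^\circ\cdot s^\circ \leq r^\circ \iff t^\circ \leq r^\circ \multimapdotinv s^\circ\), with no role-matching needed. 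In part~\ref{p:67} your displayed condition \((r\cdot f)\cdot t \leq s\) with \(t \colon Z \relto A\) is not even type-correct (the composite has type \(Z \relto Z\) while \(s \colon Y \relto Z\)); the correct test is \(t \colon A \relto Y\) with \(s\cdot t \leq r\cdot f\), and then the adjunction \((-)\cdot f^\circ \dashv (-)\cdot f\) gives \(t \leq (s\blackright r)\cdot f \iff s\cdot t\cdot f^\circ \leq r \iff s\cdot t \leq r\cdot f\). Similarly in~\ref{p:42} the condition should read \((s\cdot g)\cdot t \leq r\), and the adjunction \(g\cdot(-) \dashv g^\circ\cdot(-)\) of Remark~\ref{p:40}\ref{p:41} converts \(t \leq g^\circ\cdot(s\blackright r)\) into \(s\cdot g\cdot t \leq r\). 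Once the convention is fixed, every chase goes through as you outline and the proof coincides with the paper's; but as literally stated, several of your inequalities are false or ill-typed, so the write-up would not survive without that correction.
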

\begin{proof}
  All claims are consequences of the uniqueness of adjoints. For example, to
  show \ref{p:42} it is sufficient to observe that from Remark~\ref{p:40},
  it follows that both \((s\cdot g)\blackright-\) and
  \(g^{\circ}\cdot(s\blackright -)\) are right adjoint to \((s\cdot g)\cdot-\).
\end{proof}

\begin{proposition}
  \label{p:28}
  Let \(r \colon X \relto A\) and \(s \colon Y \relto A\) be \(\V\)-relations.
  Then
  \[
    s \multimapdot r = \bigwedge_{a \in A} (a^\circ \cdot s) \multimapdot
    (a^\circ \cdot r),
  \]
  where \(a^\circ\) denotes the converse of the map \(a \colon 1 \to A\) that
  selects the element \(a\).
\end{proposition}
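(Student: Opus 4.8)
The plan is to prove the identity by a Yoneda-style argument carried out entirely in the calculus of lifts: rather than comparing the two \(\V\)-relations elementwise, I would show that they have the same down-set in the complete lattice \(\Rels{\V}(X,Y)\), i.e.\ that for an arbitrary \(t \colon X \relto Y\) one has \(t \le s \multimapdot r\) if and only if \(t \le \bigwedge_{a \in A}(a^\circ \cdot s)\multimapdot(a^\circ\cdot r)\). Since \(\Rels{\V}(X,Y)\) is a poset, this coincidence of representable down-sets forces the two \(\V\)-relations to be equal, so both inequalities fall out of a single chain of equivalences.

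To run the chain, first apply the defining universal property of the lift to rewrite \(t \le s\multimapdot r\) as \(s\cdot t \le r\). The central step is then to test this inequality between \(\V\)-relations of type \(X \relto A\) \emph{one point of \(A\) at a time}: I claim that for \(u,v \colon X \relto A\) one has \(u \le v\) if and only if \(a^\circ \cdot u \le a^\circ \cdot v\) for every \(a \in A\). Granting this, \(s \cdot t \le r\) is equivalent to the family of inequalities \((a^\circ\cdot s)\cdot t \le a^\circ \cdot r\) (using associativity \(a^\circ\cdot(s\cdot t)=(a^\circ\cdot s)\cdot t\)), and a second application of the universal property of lifts turns each of these into \(t \le (a^\circ\cdot s)\multimapdot(a^\circ \cdot r)\). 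Quantifying over \(a\) and recalling that the meet is the infimum of the family yields exactly \(t \le \bigwedge_{a}(a^\circ\cdot s)\multimapdot(a^\circ\cdot r)\), which closes the chain.

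The main obstacle, and really the only non-formal ingredient, is the pointwise testing lemma \(u\le v \iff \forall a,\ a^\circ\cdot u \le a^\circ\cdot v\); this is where the set-theoretic nature of the codomain \(A\) enters, encoding that \(A\) is the coproduct of its points in \(\Rels{\V}\). I would establish it from the resolution of the identity \(1_A = \bigvee_{a\in A} a\cdot a^\circ\), which follows immediately from the definition of the identity \(\V\)-relation (the composite \(a\cdot a^\circ\) is \(k\) on the pair \((a,a)\) and \(\bot\) elsewhere, so the join over \(a\) recovers the diagonal); monotonicity of composition then gives \(u = \bigvee_a a\cdot(a^\circ\cdot u) \le \bigvee_a a\cdot(a^\circ\cdot v)=v\) from the hypotheses. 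This holds for an arbitrary quantale, with no integrality assumption. As a sanity check one could also verify the claim elementwise, since both sides evaluate at \((x,y)\) to \(\bigwedge_{a\in A}\hom(s(y,a),r(x,a))\); and the inequality ``\(\le\)'' alone is immediate from item~\ref{p:14} of Proposition~\ref{p:2} applied with the converse \(a^\circ\) postcomposed into both arguments.
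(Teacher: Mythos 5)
Your proof is correct. For comparison: the paper states Proposition~\ref{p:28} \emph{without} proof, evidently regarding it as immediate from the elementwise formula for lifts --- since \((a^\circ\cdot s)(y,\ast)=s(y,a)\) and \((a^\circ\cdot r)(x,\ast)=r(x,a)\), both sides evaluate at \((x,y)\) to \(\bigwedge_{a\in A}\hom(s(y,a),r(x,a))\), which is exactly the two-line computation you relegate to a closing sanity check. Your main argument takes a genuinely different, point-free route: the resolution of the identity \(1_A=\bigvee_{a\in A}a\cdot a^\circ\) (valid in any quantale, since composition preserves suprema in each variable and no integrality is needed) yields your pointwise-testing lemma \(u\le v\iff\forall a,\ a^\circ\cdot u\le a^\circ\cdot v\), and the rest is the adjunction \(s\cdot(-)\dashv s\multimapdot(-)\) together with associativity and the Yoneda-style principle that two elements of a poset with the same down-sets coincide. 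All steps check out, including the type bookkeeping (\(t\le s\multimapdot r\iff s\cdot t\le r\) for \(t\colon X\relto Y\)) and your observation that the inequality \(\le\) alone already follows from Proposition~\ref{p:2}(\ref{p:14}) with \(a^\circ\) postcomposed. What your version buys is conceptual transparency in the spirit of the paper's point-free programme: it isolates the only non-formal ingredient (that \(A\) decomposes as the join of its points, \(1_A=\bigvee_a a\cdot a^\circ\)), so the identity transfers verbatim to any enriched setting where such a resolution is available; what the implicit elementwise computation buys is brevity. Either argument would serve the paper.
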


\begin{corollary}
  Let \(r \colon X \relto Y\), \(s \colon Y \relto Y\) be \(\V\)-relations, and
  \(y \colon 1 \to Y\) a function. Then
  \[
    y^\circ \cdot (s \multimapdot r) = y^\circ \cdot (y^\circ \cdot s)
    \multimapdot (y^\circ \cdot r).
  \]
\end{corollary}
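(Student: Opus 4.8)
The plan is to peel the outer composition with \(y^\circ\) off both sides using the sliding law for functions through lifts (Proposition~\ref{p:5}\ref{p:42}), reducing everything to lifts along \(\V\)-relations whose domain is the singleton \(1\), and then to read off the result from the pointwise decomposition of lifts in Proposition~\ref{p:28}. The guiding principle, in the point-free spirit of the paper, is that precomposing a lift with the converse \(y^\circ\) of a function is harmless because \(y^\circ \cdot (-)\) is a right adjoint (Remark~\ref{p:40}\ref{p:41}) and therefore commutes with the infima defining lifts.

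First I would rewrite the left-hand side. As \(y \colon 1 \to Y\) is a function, Proposition~\ref{p:5}\ref{p:42} with \(g := y\) yields \(y^\circ \cdot (s \multimapdot r) = (s \cdot y) \multimapdot r\), a lift along \(s \cdot y \colon 1 \relto Y\). Expanding this by Proposition~\ref{p:28} with \(A := Y\) presents it as the infimum \(\bigwedge_{a \in Y} (a^\circ \cdot s \cdot y) \multimapdot (a^\circ \cdot r)\) of lifts between copies of \(1\), i.e.\ of scalars in \(\V\).

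For the right-hand side I would apply Proposition~\ref{p:5}\ref{p:42} a second time, now treating the inner lift \((y^\circ \cdot s) \multimapdot (y^\circ \cdot r)\) as the relation to be transported, obtaining \(y^\circ \cdot \big[(y^\circ \cdot s) \multimapdot (y^\circ \cdot r)\big] = \big((y^\circ \cdot s) \cdot y\big) \multimapdot (y^\circ \cdot r)\). Here \((y^\circ \cdot s) \cdot y\) and \(y^\circ \cdot r\) are precisely the components of \(s\) and \(r\) singled out by the point \(y\), so the right-hand side is the single lift indexed by \(a = y\) in the family appearing on the left.

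The heart of the matter — and the step I expect to be the main obstacle — is then to identify this distinguished component with the whole infimum produced on the left, that is, to verify that the terms with \(a \neq y\) do not drag the infimum below the \(a = y\) term. This is exactly where the defining interaction between \(s\) and \(r\) must enter, and I would discharge it through the monotonicity and composition laws for lifts in Proposition~\ref{p:2}, bounding the remaining terms by means of the unit and counit inequalities of the adjunction \(y \dashv y^\circ\). Once this comparison is carried out purely 2-categorically, the argument, like the opening injectivity proof, transfers unchanged from \(\two\) to an arbitrary quantale \(\V\).
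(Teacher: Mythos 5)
Your reductions are correct as far as they go, and they follow exactly the route the paper has in mind: by Proposition~\ref{p:5}(\ref{p:42}), the left-hand side equals \((s\cdot y)\multimapdot r\), which Proposition~\ref{p:28} (with \(A=Y\)) decomposes as \(\bigwedge_{a\in Y}(a^\circ\cdot s\cdot y)\multimapdot(a^\circ\cdot r)\); a second application of Proposition~\ref{p:5}(\ref{p:42}) identifies the right-hand side with the single component \((y^\circ\cdot s\cdot y)\multimapdot(y^\circ\cdot r)\) indexed by \(a=y\). Note that this already yields the inequality \(\leq\) for free, since an infimum lies below each of its components; what remains is the \emph{opposite} comparison, namely that the \(a=y\) component lies below all the others.

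That remaining step---the one you defer to Proposition~\ref{p:2} and the unit/counit of \(y\dashv y^\circ\)---is a genuine gap, and it cannot be closed as you propose. The adjunction gives only \(y\cdot y^\circ\le 1_Y\) and \(1_1\le y^\circ\cdot y\), and there is no ``defining interaction between \(s\) and \(r\)'' to exploit: both are arbitrary \(\V\)-relations. Pointwise, the inequality you need is \(\hom(s(y,y),r(x,y))\le\hom(s(y,a),r(x,a))\) for every \(a\in Y\), and this fails already for \(\V=\two\): take \(Y=\{y,z\}\), \(s(y,y)=\bot\), \(s(y,z)=\top\), and \(r\) constantly \(\bot\); then the \(a=y\) component is \(\top\) while the \(a=z\) component is \(\bot\), so on this example the displayed identity itself reads \(\bot=\top\). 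Hence the gap is not an artefact of your strategy: without extra hypotheses (it does go through when, e.g., \(1_Y\le s\) and \(s\cdot r\le r\), as when \(s\) is a \(\V\)-category structure and \(r\) is compatible with it in the sense of the condition \(b\cdot\varphi=\varphi\) from the introduction) the two sides genuinely differ, and the only unconditional content your reduction certifies is the inequality \(\leq\). For comparison, the paper states the corollary without proof as an immediate consequence of Proposition~\ref{p:28}: since \(y^\circ\cdot(-)\) is a right adjoint by Remark~\ref{p:40}(\ref{p:41}) and so preserves infima, what that argument literally yields is \(y^\circ\cdot(s\multimapdot r)=\bigwedge_{a\in Y}(a^\circ\cdot s\cdot y)\multimapdot(a^\circ\cdot r)\), with the infimum over all points retained; your calculation is valuable precisely because it makes visible that the printed right-hand side is only the \(a=y\) member of that family.
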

\begin{proposition}
  \label{p:10}
  Let \(r \colon X \relto Y\) and \(s \colon A \relto B \) be \(\V\)-relations,
  and let \(f \colon X \to A\) and \(g \colon Y \to B\) be functions such that
  \(g \cdot r \leq s \cdot f \). Then, \(r \cdot f^\circ \leq g^\circ \cdot s\).
\end{proposition}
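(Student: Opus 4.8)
The statement is a \emph{mate} identity: it transposes a lax square of $\V$-relations across the two functions $f$ and $g$. My plan is therefore to reduce it to the adjunctions $f \dashv f^\circ$ and $g \dashv g^\circ$ in the quantaloid $\Rels{\V}$, as recorded in Remark~\ref{p:40}. The only structural facts I need are that $\V$-relational composition is monotone in each variable, and that for a function viewed as a $\V$-relation one always has the counit inequality $f \cdot f^\circ \le 1_A$ (which rests only on $k \otimes k = k$, since $k$ is the unit of $\otimes$).

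First I would whisker the hypothesis $g \cdot r \le s \cdot f$ on the right by $f^\circ \colon A \relto X$. Monotonicity of composition gives $g \cdot r \cdot f^\circ \le s \cdot f \cdot f^\circ$, and applying the counit $f \cdot f^\circ \le 1_A$ on the right-hand side yields $s \cdot f \cdot f^\circ \le s$. Chaining these, I obtain $g \cdot (r \cdot f^\circ) \le s$, an inequality between $\V$-relations of type $A \relto B$.

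It then remains to transpose across $g$. By Remark~\ref{p:40}\ref{p:41}, applied to $g$ in place of $f$, the map $g \cdot (-)$ is left adjoint to $g^\circ \cdot (-)$ on the relevant hom-lattices, so that $g \cdot t \le u$ is equivalent to $t \le g^\circ \cdot u$. Instantiating $t = r \cdot f^\circ$ and $u = s$ turns the inequality of the previous step into $r \cdot f^\circ \le g^\circ \cdot s$, which is exactly the claim.

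I do not anticipate a genuine obstacle here: the argument is a purely formal 2-categorical manipulation, and the one place that needs attention is simply keeping track of the domains and codomains of the composites so that each whiskering and each application of the adjunction is well-typed. A pointwise verification via the explicit formulas for composition and converse is available as a fallback, but the adjoint calculus makes the computation essentially bookkeeping-free.
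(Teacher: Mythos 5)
Your proof is correct and is essentially the paper's own argument: the paper's one-line chain \(r \cdot f^\circ \leq g^\circ \cdot g \cdot r \cdot f^\circ \leq g^\circ \cdot s \cdot f \cdot f^\circ \leq g^\circ \cdot s\) is exactly your whiskering by \(f^\circ\) followed by the counit \(f \cdot f^\circ \le 1_A\), with your final adjunction transpose across \(g\) unfolded into the explicit insertion of the unit \(1_Y \le g^\circ \cdot g\). No substantive difference.
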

\begin{proof}
  \(r \cdot f^\circ \leq g^\circ \cdot g \cdot r \cdot f^\circ \leq g^\circ
  \cdot s \cdot f \cdot f^\circ \leq g^\circ \cdot s\).
\end{proof}

Category theory underlines preordered sets as the fundamental ordered
structures.  For an arbitrary quantale \(\V\), the same role is taken
by \(\V\)-categories. Analogously to the classical case, we say that a
\(\V\)-relation \(r \colon X \relto X\) is \df{reflexive} if
\(1_X \leq r\), and \df{transitive} if \( r \cdot r \leq r\). A
\df{$\V$-category} is a pair $(X,a)$ consisting of a set \(X\) of
objects and a reflexive and transitive \(\V\)-relation
\(a \colon X \relto X\); a \df{\(\V\)-functor} \((X,a) \to (Y,b)\) is
map \(f \colon X \to Y \) such that \(f \cdot a \leq b \cdot
f\). Clearly, \(\V\)-categories and \(\V\)-functors define a category,
denoted as \(\Cats{\V}\).

\begin{remark}
  As the nomenclature suggests, the notions of \(\V\)-category and
  \(\V\)-functor come from enriched category theory \citep{Kel82, Law73,
    Stu14}. In fact, unravelling the definition of reflexive and transitive
  \(\V\)-relation yields the typical definition of quantale-enriched category: a
  pair \((X,a)\) consisting of a set $X$ and a map
  $a \colon X \times X \to \V$ that satisfies the
  inequalities
  \begin{align*}
    k \le a(x,x) && \text{and} && a(x,y) \otimes a(y,z) \leq a(x,z)
  \end{align*}
  for all $x,y,z \in X$. Similarly, a $\V$-functor $f \colon (X,a) \to (Y,b)$ is a map
  $f \colon X \to Y$ such that, for all $x,y \in X$,
  \[
    a(x,y) \le b(f(x),f(y)).
  \]
\end{remark}

\begin{examples}
  The following are some familiar examples of quantale-enriched categories.
  \begin{enumerate}
  \item
      The category $\Cats{\two}$ is equivalent to the category $\ORD$ of preordered
      sets and monotone maps.
    \item  Metric, ultrametric and bounded metric spaces \`a la
      \citet{Law73} can be seen as categories enriched in left continuous
      \(t\)-norms:
      \begin{enumerate}
        \item With multiplication \(*\), \(\Cats{[0,1]_*}\) is equivalent to the
          category \(\MET\) of (generalized) \df{metric spaces} and non-expansive
          maps.
        \item With infimum \(\wedge\), \(\Cats{[0,1]_\wedge}\) is equivalent to
          the category $\UMET$ of (generalized) \df{ultrametric spaces} and
          non-expansive maps.
        \item With the \df{\L{}ukasiewicz tensor} \(\luk\),
          $\Cats{[0,1]_{\luk}}$ is equivalent to the category $\BMET$ of
          (generalized) \df{bounded-by-$1$ metric spaces} and non-expansive
          maps.
      \end{enumerate}
    \item Categories enriched in a free quantale \(\ftP M\) on a
      monoid \(M\) (such as $M=\Sigma^*$ for some alphabet~$\Sigma$)
      can be interpreted as labelled transition systems with labels
      in~$M$: in a \(\ftP M\)-category \((X,a)\), the objects represent
      the states of the system, and we can read \(m\in a(x,y)\) as an
      $m$-labelled transition from~\(x\) to~\(y\).
  \end{enumerate}
\end{examples}

\begin{definition}
	The \df{dual} of a \(\V\)-category \((X,a)\) is the \(\V\)-category \({(X,a)}^\op = (X,a^\circ)\).
\end{definition}

\begin{remark}
  \label{p:1}
  The quantale $\V$ becomes a $\V$-category under the canonical structure
  $\hom \colon \V \times \V \to \V$. In fact, for every set $S$, we can form the
  $S$-power $\V^S$ of $\V$ which has as underlying set all functions
  $h \colon S \to \V$ and $\V$-category structure $[-,-]$ given by
  \[
    [h,l] = \bigwedge_{s \in S} \hom(h(s),l(s)),
  \]
  for all $h,l \colon S \to \V$. For instance, for the quantale
  \(([0,1],\oplus,0)\), where~\([0,1]\) is equipped with the dual of the
  natural ordering (Example~\ref{p:70}(\ref{p:73})), this distance on
  \(\Rels{[0,1]}(X,Y)=[0,1]^{X\times Y}\) is given in terms of the
  \emph{natural} order on \([0,1]\) by
  \[
    [r,s] = \sup \big\{\max(s(x,y)-r(x,y),0) \mid (x,y) \in X \times Y\big\}.
  \]
\end{remark}

Every $\V$-category $(X,a)$ carries a natural order defined by
\[
  x \le y \text{ if } k \le a(x,y),
\]
which induces a faithful functor $\Cats{\V} \to \ORD$. A \(\V\)-category
\((X,a)\) is \df{separated} if
\begin{displaymath}
  (k\le a(x,y)\quad\&\quad k\leq a(y,x))\implies x=y
\end{displaymath}
for all \(x,y\in X\).
That is, \((X,a)\) is separated if the natural order
defined above is anti-symmetric.

\begin{remark}\label{d:rem:1}
  The natural order of the $\V$-category $\V$ is just the order of the quantale
  $\V$. The natural order of $\V^S$ is calculated pointwise, and as such is
  complete. Furthermore, the \(\V\)-category~\(\V^{S}\) is complete in the sense
  of enriched category theory; in particular, \(\V^{S}\) has tensors, which are
  given by \((u\otimes h)(s)=u\otimes h(s)\), for \(u\in\V\) and \(h\in\V^{S}\).
  Tensors are compatible with composition, that is,
  \((u \otimes f) \cdot g = u \otimes (f \cdot g)\) for all  \(u \in \V\),
  \(f \colon S \to \V\), and \(g \colon S' \to S\).
  Here we recall that a \(\V\)-category \((X,a)\) is tensored if, for
  every \(x\in X\), the \(\V\)-functor \(a(x,-)\colon X\to\V\) has a left
  adjoint \(-\otimes x \colon\V\to X\) in \(\Cats{\V}\).
  We also note that \(\V\)-functors between tensored \(\V\)-categories are characterized by a pleasant property: a map \(f \colon X\to Y\) between tensored \(\V\)-categories \(X\) and \(Y\) is a \(\V\)-functor if and only if \(f\) is monotone and, for all \(u\in\V\) and \(x\in X\), \(u\otimes f(x)\le f(u\otimes x)\) \citep{Stu06}.
\end{remark}

\subsection{Lax extensions}
\label{sec:prel-lax}

A \df{lax extension} of a functor \(\ftF \colon \SET \to \SET\) to \(\Rels{\V}\)
consists of a map
\[
  (r,r' \colon X \relto Y) \longmapsto (\eF r \colon \ftF X \relto \ftF Y)
\]
such that
\begin{conditions}
\item[\nlabel{p:61}{(L1)}] $r \le r' \implies \eF r \le \eF r'$,
  \item[\nlabel{p:26}{(L2)}] $\eF s\cdot\eF r\le\eF (s\cdot r)$,
  \item[\nlabel{p:0}{(L3)}] $\ftF f \le \eF f$ and
    ${(\ftF f)}^\circ\le\eF(f^\circ)$
\end{conditions}
for all \(r \colon X \relto Y\), \(s \colon Y \relto Z \), and \(f \colon X \to
Y\).  The first condition means precisely that \(\eF\) induces a monotone map
\begin{equation}
  \label{d:eq:1}
  \eF_{X,Y} \colon \Rels{\V}(X,Y) \longrightarrow \Rels{\V}(\ftF X, \ftF Y),
\end{equation}
for all sets \(X\) and \(Y\). We say that a lax extension is \df{\(\V\)-enriched} if this map satisfies the stronger condition of being a \(\V\)-functor, for all sets \(X\) and \(Y\) (see Remark~\ref{p:1}):
\begin{conditions}
  \item[\nlabel{p:37}{(L1')}] \([r,r'] \leq [\eF r, \eF r']\)
\end{conditions}
for all \(r,r' \colon X \relto Y\).
A lax extension~$\eF$ is \df{identity-preserving} if \(\eF 1_X = 1_{\ftF X}\) for every set~\(X\).

\begin{remark}
  It is common to refer to various forms of extensions of
  \(\SET\)-functors to \(\REL\) as \emph{relators}, \emph{relational
    liftings}, or \emph{lax relational liftings}.
\end{remark}

\begin{example}
  \label{p:62}
  The prototypical example of a lax extension is the Barr extension to \(\REL\) of
  a functor \(\ftF \colon \SET \to \SET\) that preserves weak pullbacks.  Taking advantage of the
  fact that every relation \(r \colon X \relto Y\) can be described as a span
  \begin{displaymath}
    \begin{tikzcd}
      X & R & Y \ar[from=1-2, to=1-1, "p_1"'] \ar[from=1-2, to=1-3, "p_2"],
    \end{tikzcd}
  \end{displaymath}
  the Barr extension of~\(\ftF\)  sends~\(r\) to the relation \(\ftF p_2 \cdot {\ftF p_1}^\circ\).
\end{example}

\noindent
\citet{KV16} provide a concise survey oriented towards applications of lax extensions in coalgebra and logic that deals mostly with lax extensions to \(\REL\).
Regarding lax extensions to \(\Rels{\V}\), work within the framework of \emph{monoidal topology} \citep{CH04, Sea05, SS08} encompasses a substantial amount of results.

Our main motivation to study lax extensions stems from the fact that
they provide a framework for the coalgebraic treatment of various
notions of quantale-valued (bi)simulation
\citep{Rut98,HJ04,MV15,WS20,WS21,Gav18,GHN+23}. Recall that an
\df{$\ftF$-coalgebra} for a functor \(\ftF \colon \SET \to \SET\) is
a pair \((X,\alpha)\) consisting of a set~$X$ of \df{states} and a
\df{transition map} \(\alpha\colon X\to\ftF X\); such coalgebras are
viewed as \emph{transition systems}, with~$\ftF$ determining the
transition type (e.g.\ if~$\ftF$ is just powerset, then
$\ftF$-coalgebras are relational transition systems in the usual
sense). Now given a lax extension
\(\eF \colon \Rels{\V} \to \Rels{\V}\) of~\(\ftF\), an
\df{\(\eF\)-simulation} between \(\ftF\)-coalgebras \((X,\alpha)\) and \((Y,\beta)\) is a
\(\V\)-relation \(s \colon X \relto Y\) such that
\[
	\beta \cdot s \leq \eF s \cdot \alpha.
\]
If the lax extension \(\eF\) preserves converse, then \(\eF\)-simulations are more suitably called \df{\(\eF\)-bisimulations}.
Since \(\Rels{\V}\) is a quantaloid, there is the largest \(\eF\)-(bi)simulation between two given coalgebras, which is termed \df{\(\eF\)-(bi)similarity}.
In the two-valued case, it has been shown by \cite{MV15} that the notion of \(\eF\)-bisimilarity arising from an identity-preserving lax extension that preserves converse coincides with the standard coalgebraic notion of behavioural equivalence.
On the other hand, the notion of \(\V\)-enriched lax extension has been instrumental in establishing quantitative Hennessy-Milner and van Benthem type  theorems \citep{WS20,WS21}. It has been introduced with \(\V\)-enrichment replaced with \ref{p:61} together with the condition

\begin{conditions}
\item[(L4)] For every set \(X\) and every \(u\in\V\),
    \(u\otimes 1_{\ftF X}\le \eF(u\otimes 1_{X})\),
\end{conditions}
where \(u \otimes 1_A\) denotes the tensor of \(u\) and \(1_A\)
as defined in Remark~\ref{d:rem:1} (and this condition is shown to be
equivalent to a variant of \(\V\)-enrichment where \(\V\) is equipped
with the symmetrized distance).  The next result records the
equivalence of these and other conditions.

\begin{theorem}\label{d:thm:3}
  For a lax extension \(\eF \colon \Rels{\V} \to \Rels{\V}\), the following assertions are equivalent.
  \begin{tfae}
  \item\label{d:item:3} \(\eF\) is \(\V\)-enriched.
  \item\label{d:item:4} For every set \(X\) and every \(u\in\V\),
    \(u\leq[1_{\ftF X},\eF(u\otimes 1_{X})]\).
  \item\label{d:item:5} For every set \(X\) and every \(u\in\V\),
    \(u\otimes 1_{\ftF X}\le \eF(u\otimes 1_{X})\).
  \item\label{d:item:6} For every \(\V\)-relation \(r \colon X \relto Y\) and every \(u\in\V\),
    \(u\otimes\eF r\le\eF(u\otimes r)\).
  \end{tfae}
\end{theorem}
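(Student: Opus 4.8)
The plan is to prove all four conditions equivalent by assembling three equivalences, (ii)$\Leftrightarrow$(iii), (iii)$\Leftrightarrow$(iv) and (i)$\Leftrightarrow$(iv), exploiting that the two ``enriched'' conditions (i), (ii) are bridged to the two ``pointwise'' conditions (iii), (iv) by the tensor--hom adjunction in the \(\V\)-category \(\V^{S}\) (Remark~\ref{p:1}), namely \([u\otimes h,l]=\hom(u,[h,l])\). The computational engine throughout is the observation that tensoring a \(\V\)-relation by a scalar is just composition with a scaled identity: for \(u\in\V\) and \(r\colon X\relto Y\),
\[
  u\otimes r=(u\otimes 1_Y)\cdot r,
\]
as a one-line matrix computation shows. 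This identity is what lets me trade the scalar \(u\) against \ref{p:26} and so recover statements about arbitrary relations from statements about identity relations.

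First I would dispose of (ii)$\Leftrightarrow$(iii), which is purely formal: in \(\V^{\ftF X\times\ftF X}\) the underlying-order inequality \(u\otimes 1_{\ftF X}\le\eF(u\otimes 1_X)\) is equivalent, via \([u\otimes h,l]=\hom(u,[h,l])\) and \(k\le\hom(u,w)\iff u\le w\), to \(u\le[1_{\ftF X},\eF(u\otimes 1_X)]\). Next, (iv)$\Rightarrow$(iii) is immediate by specialising \(r=1_X\) and using \(1_{\ftF X}=\ftF\id_X\le\eF 1_X\) from \ref{p:0} together with monotonicity of \(u\otimes-\).

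The substantive implications are (iii)$\Rightarrow$(iv), (i)$\Rightarrow$(iv) and (iv)$\Rightarrow$(i). For (iii)$\Rightarrow$(iv) I would write \(u\otimes r=(u\otimes 1_Y)\cdot r\), apply \ref{p:26} to get \(\eF(u\otimes r)\ge\eF(u\otimes 1_Y)\cdot\eF r\), then bound \(\eF(u\otimes 1_Y)\ge u\otimes 1_{\ftF Y}\) by (iii) and contract \((u\otimes 1_{\ftF Y})\cdot\eF r=u\otimes\eF r\) by the same scaling identity. For (i)$\Rightarrow$(iv) I would instantiate the enrichment inequality \([r,r']\le[\eF r,\eF r']\) at \(r'=u\otimes r\), using \(u\le[r,u\otimes r]\) (which holds because \(u\le\hom(v,u\otimes v)\) for every \(v\)) to read off \(u\le[\eF r,\eF(u\otimes r)]\), i.e.\ \(u\otimes\eF r\le\eF(u\otimes r)\). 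Finally, for the reverse (iv)$\Rightarrow$(i) the trick is to put \(u=[r,r']\): since \([r,r']\otimes r\le r'\) pointwise, \ref{p:61} gives \(\eF([r,r']\otimes r)\le\eF r'\), and combining with (iv) yields \([r,r']\otimes\eF r\le\eF([r,r']\otimes r)\le\eF r'\), which is exactly \([r,r']\le[\eF r,\eF r']\).

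The main obstacle is conceptual rather than computational: recognising that scalar tensoring factors as composition with a scaled identity (the displayed identity above), since this is precisely what converts the a priori weaker conditions on identity relations, (ii)/(iii), into the condition (iv) on arbitrary relations through lax functoriality \ref{p:26}. Once this identity and the tensor--hom adjunction are in hand, every step reduces to a short manipulation, with the clever instantiations \(r'=u\otimes r\) and \(u=[r,r']\) mediating the passage between the pointwise and enriched formulations.
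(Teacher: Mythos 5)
Your proof is correct and follows essentially the same route as the paper: the central device in both is the factorisation of scalar tensoring as composition with a scaled identity (you use \(u\otimes r=(u\otimes 1_Y)\cdot r\), the paper uses \(u\otimes r=r\cdot(u\otimes 1_X)\), which agree by commutativity of \(\V\)), combined with \ref{p:26} for \ref{d:item:5}\(\Rightarrow\)\ref{d:item:6} and the tensor--hom adjunction in \(\V^{S}\) for \ref{d:item:4}\(\Leftrightarrow\)\ref{d:item:5}. The only real difference is that where the paper disposes of \ref{d:item:3}\(\Leftrightarrow\)\ref{d:item:6} by citing the general characterisation of \(\V\)-functors between tensored \(\V\)-categories from Remark~\ref{d:rem:1} (due to Stubbe), you inline that argument via the instantiations \(r'=u\otimes r\) and \(u=[r,r']\), making the proof self-contained at no cost.
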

\begin{proof}
  Note that a lax extension satisfies the monotonicity condition
  \ref{p:61}, therefore the statements \ref{d:item:3} and
  \ref{d:item:6} are equivalent by general results about tensored
  \(\V\)-categories recalled in Remark~\ref{d:rem:1}.  Now assume
  \ref{d:item:3}. Then, for every \(u\in\V\),
  \begin{displaymath}
    u\le [1_{X},u\otimes 1_{X}]\le [\eF 1_{X},\eF(u\otimes 1_{X})]\le [1_{\ftF X},\eF(u\otimes 1_{X})].
  \end{displaymath}
  The implication \ref{d:item:4}\(\Rightarrow\)\ref{d:item:5} follows from the
  adjunction \(-\otimes 1_{\ftF X}\dashv[1_{\ftF X},-]\). Finally, assume
  \ref{d:item:5}. Then, for every \(u\in\V\) and \(r \colon X\relto Y\),
  \begin{displaymath}
    \eF(u\otimes r)
    =\eF(u\otimes (r\cdot 1_{X}))
    =\eF(r\cdot (u\otimes 1_{X}))
    \geq \eF r\cdot \eF(u\otimes 1_{X})
    \geq \eF r\cdot (u\otimes 1_{\ftF X})
    =u\otimes \eF r.\qedhere
  \end{displaymath}
\end{proof}

\begin{remark}\label{d:rem:4}
  If \(k=\top\) in \(\V\), then \(u\otimes 1_{X}\le 1_{X}\) and therefore
  \([u\otimes 1_{X},1_{X}]=k=\top\). Hence,
  \begin{displaymath}
    [1_{X},u\otimes 1_{X}]=[1_{X},u\otimes 1_{X}]\wedge [u\otimes 1_{X},1_{X}],
  \end{displaymath}
  and \(\V\)-enrichment of a lax extension can be equivalently expressed using
  the symmetrization of the canonical structure on \(\V\).
\end{remark}

\begin{remark}
  For the quantale \([0,1]\) of Example~\ref{p:70}(\ref{p:73}), \citet{WS20}
  prove the equivalence \ref{d:item:3}\(\Leftrightarrow\)\ref{d:item:5} of
  Theorem~\ref{d:thm:3}, but with non-expansiveness of \eqref{d:eq:1} defined
  with respect to the symmetric Euclidean metric on \([0,1]\) and with
  \(\Delta_{\varepsilon,X}\) denoting \(\varepsilon\otimes 1_{X}\). Since this
  quantale is integral, Remark~\ref{d:rem:4} ensures that this is equivalent to
  considering the asymmetric distance.
\end{remark}

In the remainder of this subsection we collect some useful properties of lax extensions.
First we note that they preserve certain composites of \(\V\)-relations and functions strictly.

\begin{proposition}
  \label{p:18}
  Let \(\eF \colon \Rels{\V} \to \Rels{\V}\) be a lax extension, and let
  \(f \colon X \to Y\), \(g \colon W \to Z\) be functions and
  \(s \colon Y \relto Z\) a \(\V\)-relation. Then,
  \begin{enumerate}
    \item \label{p:16} \(\eF (s\cdot f)=\eF s\cdot\eF f=\eF s\cdot \ftF f\),
    \item \label{p:20}
      \(\eF (g^\circ\cdot s)=
      \eF(g^{\circ})\cdot\eF s={(\ftF g)}^\circ\cdot\eF s\).
  \end{enumerate}
\end{proposition}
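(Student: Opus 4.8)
The plan is to prove each equality by building a cycle of inequalities that pins all the listed terms together; the two parts are formally dual, so I would treat part~\ref{p:16} in detail and then obtain part~\ref{p:20} by the mirror-image argument (replacing postcomposition by a function with precomposition by a converse, and \(\ftF f\) by \((\ftF g)^\circ\)).

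For part~\ref{p:16} two of the three comparisons are immediate: \ref{p:26} yields \(\eF s\cdot\eF f\le\eF(s\cdot f)\), and the monotonicity half of \ref{p:0}, namely \(\ftF f\le\eF f\), yields \(\eF s\cdot\ftF f\le\eF s\cdot\eF f\). Everything therefore reduces to the single nontrivial inequality \(\eF(s\cdot f)\le\eF s\cdot\ftF f\), which closes the cycle
\begin{displaymath}
  \eF s\cdot\eF f\ \le\ \eF(s\cdot f)\ \le\ \eF s\cdot\ftF f\ \le\ \eF s\cdot\eF f
\end{displaymath}
and forces the three to coincide.

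The crux is thus this last inequality, and the key idea is that since \(\ftF\) is a \(\SET\)-functor, \(\ftF f\) is an honest function, so Remark~\ref{p:40} supplies the adjunction \(\ftF f\dashv(\ftF f)^\circ\) in \(\Rels{\V}\); in particular \(1_{\ftF X}\le(\ftF f)^\circ\cdot\ftF f\). Inserting this identity on the domain side of \(\eF(s\cdot f)\) and estimating, I expect the chain
\begin{displaymath}
  \eF(s\cdot f)\le\eF(s\cdot f)\cdot(\ftF f)^\circ\cdot\ftF f
  \le\eF(s\cdot f)\cdot\eF(f^\circ)\cdot\ftF f
  \le\eF(s\cdot f\cdot f^\circ)\cdot\ftF f
  \le\eF s\cdot\ftF f,
\end{displaymath}
where the second step uses the converse half of \ref{p:0} (that \((\ftF f)^\circ\le\eF(f^\circ)\)), the third uses \ref{p:26}, and the last combines the counit \(f\cdot f^\circ\le 1_Y\) of \(f\dashv f^\circ\) with \ref{p:61}.

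The main (if modest) obstacle I anticipate is bookkeeping: the various relations and their \(\eF\)-images live in different hom-lattices of the quantaloid \(\Rels{\V}\), so one must insert identities on the correct side and verify that \(\ftF f\dashv(\ftF f)^\circ\) bounds the intended identity \(1_{\ftF X}\) (the domain of \(\eF(s\cdot f)\)) and not \(1_{\ftF Y}\). For part~\ref{p:20} the symmetric computation instead inserts \(1_{\ftF W}\le(\ftF g)^\circ\cdot\ftF g\) on the codomain side, replaces \(\ftF g\le\eF g\) via \ref{p:0}, and uses the counit \(g\cdot g^\circ\le 1_Z\), producing the cycle \(\eF(g^\circ)\cdot\eF s\le\eF(g^\circ\cdot s)\le(\ftF g)^\circ\cdot\eF s\le\eF(g^\circ)\cdot\eF s\) and hence equality of all three terms.
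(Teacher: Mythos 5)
Your proof is correct: the cycle \(\eF s\cdot\eF f\le\eF(s\cdot f)\le\eF s\cdot\ftF f\le\eF s\cdot\eF f\) is valid, and the key chain \(\eF(s\cdot f)\le\eF(s\cdot f)\cdot(\ftF f)^\circ\cdot\ftF f\le\eF(s\cdot f)\cdot\eF(f^\circ)\cdot\ftF f\le\eF(s\cdot f\cdot f^\circ)\cdot\ftF f\le\eF s\cdot\ftF f\) type-checks and uses only the unit of \(\ftF f\dashv(\ftF f)^\circ\) (Remark~\ref{p:40}), \ref{p:0}, \ref{p:26}, the counit \(f\cdot f^\circ\le 1_Y\), and \ref{p:61}, with the dual computation handling part~\ref{p:20}. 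The paper itself gives no in-line argument but defers to \citep[Proposition~III.1.4.4]{HST14}, and your argument is essentially the standard one found there, so this counts as the same approach.
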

\begin{proof}
  See, for example, \citep[Proposition~III.1.4.4]{HST14}.
\end{proof}

\begin{proposition}
  \label{p:15}
  Let \(\eF \colon \Rels{\V} \to \Rels{\V}\) be a lax functor.  Then,
  items~\ref{p:16} and~\ref{p:20} of Proposition~\ref{p:18} are
  equivalent as conditions on~\(\eF\).  If\/ \(\eF\) satisfies them,
  then \(\eF\) is a lax extension of~$\ftF$.
\end{proposition}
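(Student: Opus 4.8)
The plan is to show that, for a lax functor $\eF$ extending the $\SET$-functor $\ftF$ on objects (so that monotonicity \ref{p:61} and lax composition \ref{p:26} hold, together with the lax unit law $1_{\ftF X}\le\eF 1_X$ that is part of the notion of lax functor), each of the two conditions \ref{p:16} and \ref{p:20} is in fact equivalent to condition \ref{p:0}, i.e.\ to $\eF$ being a lax extension. Granting this, both assertions follow at once: the equivalence of \ref{p:16} and \ref{p:20} because each is equivalent to \ref{p:0}, and the final claim because \ref{p:0} together with \ref{p:61} and \ref{p:26} is precisely the definition of a lax extension. The substantive work is therefore to derive \ref{p:0} from either of \ref{p:16} or \ref{p:20}; the converse is already recorded in Proposition~\ref{p:18}.

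First I would extract $\ftF f\le\eF f$ from \ref{p:16}. Instantiating it at $s=1_Y$ gives $\eF f=\eF 1_Y\cdot\ftF f$, and the lax unit law $1_{\ftF Y}\le\eF 1_Y$ together with monotonicity of composition yields $\eF f\ge 1_{\ftF Y}\cdot\ftF f=\ftF f$. For the second half of \ref{p:0}, namely $(\ftF g)^\circ\le\eF(g^\circ)$ for a function $g\colon W\to Z$, I would instantiate \ref{p:16} at $s=g^\circ$ and $f=g$, obtaining $\eF(g^\circ\cdot g)=\eF(g^\circ)\cdot\ftF g$. Since $g\dashv g^\circ$ gives $1_W\le g^\circ\cdot g$, monotonicity \ref{p:61} and the lax unit law produce $1_{\ftF W}\le\eF(g^\circ)\cdot\ftF g$. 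Now I transpose along the adjunction $\ftF g\dashv(\ftF g)^\circ$, which is available because $\ftF g$ is an ordinary function and hence a map in $\Rels{\V}$ (Remark~\ref{p:40}): postcomposing with $(\ftF g)^\circ$ and using the counit $\ftF g\cdot(\ftF g)^\circ\le 1_{\ftF Z}$ gives $(\ftF g)^\circ=1_{\ftF W}\cdot(\ftF g)^\circ\le\eF(g^\circ)\cdot\ftF g\cdot(\ftF g)^\circ\le\eF(g^\circ)$, as required. Hence \ref{p:16} implies \ref{p:0}.

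The derivation of \ref{p:0} from \ref{p:20} is entirely symmetric. Instantiating \ref{p:20} at $s=1$ yields the second half $(\ftF g)^\circ\le\eF(g^\circ)$ directly, while instantiating it at $g=f$, $s=f$ gives $\eF(f^\circ\cdot f)=(\ftF f)^\circ\cdot\eF f$; using $1_{\ftF X}\le\eF 1_X\le\eF(f^\circ\cdot f)$ and transposing the other way, precomposing with $\ftF f$ and invoking the counit of $\ftF f\dashv(\ftF f)^\circ$, delivers $\ftF f\le\ftF f\cdot(\ftF f)^\circ\cdot\eF f\le\eF f$. With \ref{p:0} established from either hypothesis, Proposition~\ref{p:18} shows that the resulting lax extension satisfies both \ref{p:16} and \ref{p:20}, closing the circle $\ref{p:16}\Leftrightarrow\ref{p:0}\Leftrightarrow\ref{p:20}$ and proving that $\eF$ is a lax extension.

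I expect the main obstacle to be the ``harder'' half of \ref{p:0} in each case, namely the inequality involving $g^\circ$, since this is exactly where mere lax functoriality must be upgraded to a strict statement. The crucial move is the adjoint transpose along $\ftF g\dashv(\ftF g)^\circ$, legitimate precisely because $\ftF g$ is a map (Remark~\ref{p:40}); one must also confirm that the lax unit law $1_{\ftF X}\le\eF 1_X$ is part of the standing notion of lax functor, as it is used in both halves and in both directions.
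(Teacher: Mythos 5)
Your proof is correct and matches the intended argument: the paper proves this proposition only by citation to \citep[Proposition~III.1.4.3]{HST14}, and your reduction of each of \ref{p:16} and \ref{p:20} to \ref{p:0} via the lax unit law and transposition along the adjunctions \(f\dashv f^\circ\) and \(\ftF f\dashv{(\ftF f)}^\circ\), closing the circle with Proposition~\ref{p:18}, is essentially the argument given there. The two caveats you flag do hold under the paper's conventions (which follow \citet{HST14}): a lax functor comes equipped with the unit comparison \(1_{\ftF X}\le\eF 1_X\), and the statement implicitly presupposes that \(\eF\) agrees on objects with the \(\SET\)-functor \(\ftF\), as is required for the conditions of Proposition~\ref{p:18} even to be stated.
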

\begin{proof}
  See, for example, \citep[Proposition~III.1.4.3]{HST14}.
\end{proof}

A \df{morphism of lax extensions} \(\alpha \colon (\ftG, \eG) \to (\ftF,\eF_\ftF)\) is a natural transformation \(\alpha \colon \ftG \to \ftF\) that is oplax as a transformation \(\alpha \colon \eG_\ftG \to \eF_\ftF\);
that is, for every \(r \colon X \relto Y\),
\[
  \alpha_Y \cdot \eG_\ftG r \leq \eF_\ftF r \cdot \alpha_X.
\]
When no ambiguities arise, we simply write
\(\alpha \colon \eG_\ftG \to \eF_\ftF\).

Disregarding size constraints, we have that lax extensions and their
morphisms form a category which is topological over the category of
endofunctors on \(\SET\) \citep[Remark~3.5]{SS08}.
Given a family
\(\alpha = {(\alpha_i \colon \ftF \to \ftF_i)}_{i \in I}\) of natural
transformations where each \(\ftF_i\) carries a lax extension
\(\eF_i\) to \(\Rels{\V}\), the \df{initial extension} \(\eF_\alpha\)
is defined by
\[
  \eF_\alpha r
  = \bigwedge_{i \in I} \alpha_Y^\circ \cdot \eF_i r \cdot \alpha_X,
\]
for \(r \colon X \relto Y\). In particular, the supremum and infimum over
a \emph{class} of lax extensions of a functor \(\ftF \colon \SET \to \SET\) is a
lax extension of \(\ftF\).

Every lax extension has a dual as introduced by \citet{Sea05}.  The \df{dual lax
extension} \(\eF^\circ \colon \Rels{\V} \to \Rels{\V}\) of a lax extension \(\eF
\colon \Rels{\V} \to \Rels{\V}\) is the lax extension of \(\ftF \colon \SET \to
\SET\) that is defined by the assignment
\[
  r \longmapsto {\eF (r^\circ)}^\circ.
\]
Notably, this means that we can
symmetrize lax extensions. The \df{symmetrization}
\(\seF \colon \Rels{\V} \to \Rels{\V}\) of a lax extension
\(\eF \colon \Rels{\V} \to \Rels{\V}\) is the lax extension obtained as the
meet of~\(\eF\) and~\(\deF\).

Finally, an important application of lax extensions of a functor
\(\ftF \colon \SET \to \SET\) to \(\Rels{\V}\) is to construct
liftings of \(\ftF\) to \(\Cats{\V}\); that is, endofunctors on
\(\Cats{\V}\) that make the following diagram commute, where the
vertical arrows represent the forgetful functor:
\begin{displaymath}
  \begin{tikzcd}
    \Cats{\V} & \Cats{\V} \\
    \SET      & \SET
    \ar[from=1-1, to=1-2, "\eF"]
    \ar[from=1-1, to=2-1]
    \ar[from=1-2, to=2-2]
    \ar[from=2-1, to=2-2, "\ftF"']
  \end{tikzcd}
\end{displaymath}
The lifting \(\eF \colon \Cats{\V} \to \Cats{\V}\) induced by a lax extension
\(\eF \colon \Rels{\V} \to \Rels{\V}\) sends a \(\V\)-category \((X,a)\) to the
\(\V\)-category \((\ftF X, \eF a)\).

\subsection{Predicate liftings}
\label{ssec:pl}

Given a cardinal \(\kappa\), a \(\kappa\)-ary \df{predicate lifting} for a functor \(\ftF \colon \SET \to \SET\) is a natural transformation
\[
  \mu \colon \ftPVk \longrightarrow \ftPV \ftF,
\]
where, for a set \(Y\), \(\contrapow_Y \colon \SET^\op \to \SET\) denotes the functor
\[
  \SET(-,Y) \colon \SET^\op \to \SET;
\]
that is, we can think of elements of $\ftPVk X$ as $\kappa$-indexed
families of $\V$-valued predicates on~$X$. We say that
\(\mu \colon \ftPVk\to \ftPV \ftF\) is \df{monotone} if
for every set \(X\) the map
\[
	\mu_X \colon \SET(X,\V^\kappa) \to \SET(\ftF X,\V)
\]
is monotone w.r.t the pointwise orders induced by the corresponding powers of the complete lattice \(\V\), and \df{\(\V\)-enriched} if this map is actually a \(\V\)-functor w.r.t to the \(\V\)-categorical structures induced by the corresponding powers of the \(\V\)-category \(\V\) (See Remark~\ref{p:1}).
Note that when talking about monotone or \(\V\)-enriched predicate liftings, instead of \(\ftPVk\colon\SET^{\op}\to\SET\) we actually consider the functors \(\ftPVk\colon\SET^{\op}\to\POST\) and \(\ftPVk\colon\SET^{\op}\to\Cats{\V}\), respectively.
In any case, the functors \(\ftPVk\) are part of an adjunction: in general, for a category~\(\catA\) and an object~\(A\) with powers in~\(\catA\), we have
  \begin{displaymath}
    \begin{tikzcd}[column sep=huge] %
      \SET^{\op} %
      \ar[shift left, start anchor=east, end anchor=west, bend left=25,
      ""{name=U,below}]%
      {r}{\contrapow_{A}} %
      & \catA %
      \ar[start anchor=west,end anchor=east,shift left,bend left=25,
      ""{name=D,above}] %
      {l}{\catA(-,A)} \ar[from=U,to=D,"\bot" description]
    \end{tikzcd}.
  \end{displaymath}
  Moreover, for all functors \(\ftG \colon \SET^{\op}\to\catA\) and \(\ftF \colon\SET\to\SET\), the adjunction above induces a bijection between natural transformations \(\ftG\to\contrapow_{A}\ftF\) and natural transformations \(\ftF\to\ftA(\ftG,A)\).
  In particular, a \(\kappa\)-ary predicate lifting \(\mu \colon \ftPVk\to\ftPV\ftF\) corresponds to a natural transformation
  \begin{equation}\label{eq:mu-bar}
    \overline{\mu}\colon\ftF \longrightarrow\catA(\ftPVk,\V).
  \end{equation}
  For \(\catA=\SET\) (respectively \(\catA=\POST\)) and \(\V=\two\), the codomain of~$\overline{\mu}$ is the generalized (monotone) neighbourhood functor \citep{SchroderPattinon10,MV15}.
  A collection \(M\) of predicate liftings is \df{separating} if the cone
  \begin{displaymath}
    (\overline{\mu}_{X}\colon  \ftF X\longrightarrow\SET(\ftPVk X,\V))_{\mu\in M}
  \end{displaymath}
  (with~$\overline\mu$ as per~\eqref{eq:mu-bar}) is mono (i.e.\ jointy injective) for every set \(X\).

\begin{remark}
  Predicate liftings are instrumental in coalgebraic logic \citep{Pat04,Sch08,CKP+11}.
  As a basic example, consider the Kripke semantics of modal logic.
  Coalgebras for the covariant powerset functor \(\ftP \colon \SET \to \SET\) correspond precisely to Kripke frames.
  In coalgebraic modal logic, the Kripke semantics of the modal logic \(K\) is recovered by interpreting the modal operator \(\Diamond\) as the predicate lifting \(\Diamond \colon \contrapow_2 \to \contrapow_2\ftP\) whose \(X\)-component is
  defined by
  \[
    \Diamond_X(A)= \{B \subseteq X \mid A \cap B \neq \varnothing\},
  \]
  or by interpreting the modal operator \(\Box\) as the predicate lifting \(\Box \colon \contrapow_2 \to \contrapow_2\ftP\) whose \(X\)-component is defined by
  \[
    \Box_X(A)= \{B \subseteq X \mid B \subseteq A\}.
  \]
  Both \(\Diamond\) and \(\Box\) are monotone (=\(\two\)-enriched)
  predicate liftings, and both $\{\Diamond\}$ and $\{\Box\}$ are
  separating for \(\ftP\).  Similarly, \(\V\)-valued predicate
  liftings provide the semantical framework for modal operators in
  \(\V\)-valued coalgebraic modal logic
  \citep{WS20,WS21,GHN+23,FGH+23}. In inductive definitions of
  the semantics of (coalgebraic) modal logics over a coalgebra
  $(X,\alpha)$, formulae are typically interpreted as subsets of the
  state set~$X$. If~$\heartsuit$ is the predicate lifting interpreting a
  modality~$\heartsuit$, and~$Y\subseteq X$ is the interpretation of a
  formula~$\phi$, then the formula $\heartsuit\phi$ is interpreted by
  the set $\alpha^{-1}[\heartsuit_X(Y)]$. For instance, the interpretation
  of~$\Diamond\phi$ in a $\ftP$-coalgebra $(X,\alpha)$, again
  with~$Y\subseteq X$ being the interpretation of~$\phi$, consists of
  all $x\in X$ such that~$\alpha(x)\cap Y\neq\emptyset$, i.e.\ of all
  states that have some successor satisfying~$\phi$.
\end{remark}

\begin{remark}
  \label{p:43}
  Generalizing a corresponding observation for the \(\two\)-valued case~\citep{Sch08}, we note that by the Yoneda lemma, a predicate lifting \(\mu \colon \ftPVk \to \ftPV \ftF\) is equivalently given by a morphism of type \(\ftF \V^\kappa \to \V\), the image of the identity map on \(\V^\kappa\) under $\mu_{\V^\kappa}$.
  In particular, for given~$\kappa$, the collection of all $\kappa$-ary predicate liftings is small.
  The \(X\)-component \(\mu_X \colon \ftPVk X \to \ftPV \ftF X \) of the predicate lifting induced by a morphism \(g\colon \ftF \V^\kappa \to \V\) is defined by
  \[
    \mu_X(f)= g \cdot \ftF f.
  \]
\end{remark}

  In the two-valued case, it has been shown that separating sets of finitary predicate liftings for a finitary \(\SET\)-functor give rise to expressive coalgebraic modal logics \citep{Sch08}, and that a finitary \(\SET\)-functor admits a separating set of finitary \emph{monotone} predicate liftings if and only if the functor admits an identity-preserving lax extension to \(\REL\) \citep{MV15}.
  On the other hand, sets of \(\V\)-enriched predicate liftings satisfying a quantitative analogue of separation feature in expressiveness results for quantitative coalgebraic modal logics for behavioural distances \citep{KonigMikaMichalski18,WS20,WS21,WildSchroder22,FGH+23}.
  We conclude this subsection with a characterization of \(\V\)-enriched predicate liftings, which corresponds to Theorem~\ref{d:thm:3} and can be shown similarly.

\begin{theorem}
  \label{p:49}
  For a predicate lifting \(\mu \colon \ftPVk \to \ftPV\ftF\), the following are equivalent.
  \begin{tfae}
  \item \(\mu\) is \(\V\)-enriched.
  \item \(\mu\) is monotone and for every \(u\in\V\),
    \(u\leq[\mu(1_{\V^\kappa}),\mu(u\otimes 1_{\V^\kappa})]\).
  \item \(\mu\) is monotone and for every \(u\in\V\),
    \(u\otimes \mu(1_{\V^\kappa}) \le \mu(u\otimes 1_{\V^\kappa})\).
  \item \(\mu\) is monotone and for every function \(f \colon X \to \V^\kappa\) and every \(u\in\V\),
    \(u\otimes\mu(f) \le \mu(u\otimes f)\).
  \end{tfae}
\end{theorem}

\begin{remark}
  Analogously to Remark~\ref{d:rem:4}, if \(k = \top\), then \(\V\)-enrichment of predicate liftings can be equivalently expressed using the symmetrization of the canonical structure on \(\V\).
\end{remark}

\section{From Lax Extensions to Predicate Liftings}
\label{sec:lax->pl}

We proceed to investigate the relationship between lax extensions and
predicate liftings, using \(\V\)-relations as a natural language to
express their interaction.  We begin by expressing
predicate liftings in terms of \(\V\)-relations, with an view to
constructing predicate liftings from lax extensions.

We recall that the category \(\SET\) is cartesian closed. In particular, this
means that for every set~\(I\), the evaluation map
\(\ev_I \colon \V^I \times I \to \V\), which we think of as a \(\V\)-relation
\(\ev_I \colon \V^I \relto I\) defined by \(\ev_I(f,i) = f(i)\), exhibits
the function space \(\V^I\) as an exponential object.  Therefore, for every
set~\(X\), currying/uncurrying defines an isomorphism
\[
  \Rels{\V}(X,I) \simeq \SET(X,\V^I).
\]
We denote the right-adjunct of a \(\V\)-relation \(r \colon X \relto I\) by
\(r^\sharp \colon X \to \V^I\), and the left-adjunct of a function
\(f \colon X \to \V^I\) by \(f^\flat \colon X \relto I\).

\begin{remark}
  For every \(\V\)-relation \(r \colon X \relto I\), the universal property of
  \(\ev_I \colon \V^I \times I \to \V\) interpreted in \(\Rels{\V}\) implies
  \(
    r = \ev_I \cdot r^\sharp.
  \)
\end{remark}

\begin{lemma}
  \label{p:55}
  Let \(\V\) be a quantale and \(I\) a set. For all functions
  \(f \colon X \to Y\) and \(g \colon Y \to \V^I\),
  \[
    (g \cdot f)^\flat = g^\flat \cdot f_\circ.
  \]
\end{lemma}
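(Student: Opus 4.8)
The plan is to first extract an explicit description of the left adjunct $(-)^\flat$ on functions, and then reduce the claimed identity to functoriality of $(-)_\circ$ together with associativity of $\V$-relational composition. Once the right formula for $h^\flat$ is in place, the statement is essentially immediate.

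First I would record the formula $h^\flat = \ev_I \cdot h_\circ$ valid for any function $h \colon X \to \V^I$. This follows directly from the Remark preceding the statement: applied to the $\V$-relation $r = h^\flat$, it gives $h^\flat = \ev_I \cdot {(h^\flat)}^\sharp$, and since $\sharp$ and $\flat$ are mutually inverse we have ${(h^\flat)}^\sharp = h$, whence $h^\flat = \ev_I \cdot h_\circ$, reading the function $h$ as the $\V$-relation $h_\circ \colon X \relto \V^I$. If one prefers a direct check, one computes
\[
  (\ev_I \cdot h_\circ)(x,i)
  = \bigvee_{\phi \in \V^I} h_\circ(x,\phi) \otimes \ev_I(\phi,i)
  = \ev_I(h(x),i) = h(x)(i),
\]
using that $h_\circ(x,\phi)$ equals $k$ exactly when $\phi = h(x)$ and $\bot$ otherwise; this is precisely the value of $h^\flat$ at $(x,i)$.

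With this formula available, the identity falls out. Applying it to $h = g \cdot f \colon X \to \V^I$ gives $(g \cdot f)^\flat = \ev_I \cdot (g \cdot f)_\circ$. Since $(-)_\circ \colon \SET \to \Rels{\V}$ is a functor, $(g \cdot f)_\circ = g_\circ \cdot f_\circ$, and associativity of composition in the quantaloid $\Rels{\V}$ then yields
\[
  (g \cdot f)^\flat = \ev_I \cdot g_\circ \cdot f_\circ
  = (\ev_I \cdot g_\circ) \cdot f_\circ = g^\flat \cdot f_\circ,
\]
where the last step applies the formula once more, now to $h = g$. There is no genuinely hard step: the only point requiring care is the implicit coercion of functions to $\V$-relations via $(-)_\circ$, so that the right-hand side $g^\flat \cdot f_\circ$ really is a composite in $\Rels{\V}$ and the functoriality $(g \cdot f)_\circ = g_\circ \cdot f_\circ$ is legitimately invoked; everything else is bookkeeping with the adjunction $\Rels{\V}(X,I) \simeq \SET(X,\V^I)$ and associativity in $\Rels{\V}$.
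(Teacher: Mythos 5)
Your proof is correct, but it takes a genuinely more structural route than the paper's. The paper proves the lemma by a direct pointwise computation: for \(x \in X\) and \(i \in I\), it simply unfolds \((g \cdot f)^\flat(x,i) = g(f(x))(i) = g^\flat(f(x),i) = (g^\flat \cdot f_\circ)(x,i)\), each equality holding by definition of \((-)^\flat\) and of composition with \(f_\circ\). You instead isolate the point-free formula \(h^\flat = \ev_I \cdot h_\circ\) --- an immediate consequence of the preceding remark \(r = \ev_I \cdot r^\sharp\) together with the mutual inverseness of \((-)^\sharp\) and \((-)^\flat\) --- and then reduce the lemma to functoriality of \((-)_\circ \colon \SET \to \Rels{\V}\) plus associativity in the quantaloid \(\Rels{\V}\). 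Both arguments are sound; note in particular that functoriality of \((-)_\circ\) holds over any quantale (non-triviality of \(\V\) is only needed for faithfulness), so no hypothesis is strained, and your identification of the coercion of functions to \(\V\)-relations as the one point requiring care is apt. Your route buys a reusable identity expressing \((-)^\flat\) as whiskering with \(\ev_I\), which fits the paper's declared point-free philosophy --- indeed, your optional elementwise verification of \(h^\flat = \ev_I \cdot h_\circ\) is exactly the kind of computation the paper performs directly on the lemma itself --- whereas the paper's version is shorter, trading the detour through \(\ev_I\) for a one-line unfolding of definitions at a point \((x,i)\).
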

\begin{proof}
  Let \(x \in X\) and \(i \in I\). Then, by definition,
  \[
    (g \cdot f)^\flat (x,i)
    = (g \cdot f)(x)(i)
    = g(f(x))(i)
    = g^\flat(f(x),i)
    = (g^\flat \cdot f_\circ) (x,i).\qedhere
  \]
\end{proof}

In the sequel, given a functor \(\ftF \colon \SET \to \SET\) and a cardinal
\(\kappa\), let \(\Rels{\V}_\circ(\ftF -, \kappa) \colon \SET^\op \to \SET\)
denote the composite functor
\begin{displaymath}
  \begin{tikzcd}[row sep=tiny, column sep=large]
   \SET^\op & \SET^\op & \Rels{\V}^\op & \SET
   \ar[from=1-1, to=1-4, bend left]{rrr}{\Rels{\V}_\circ(\ftF -, \kappa)}
   \ar[from=1-1, to=1-2, "\ftF"']
   \ar[from=1-2, to=1-3, "(-)_\circ"']
   \ar[from=1-3, to=1-4, "{\Rels{\V}(-, \kappa)}"'].
  \end{tikzcd}
\end{displaymath}
The above lemma says essentially that the uncurrying bijection
$(-)^\flat$ is natural, so we immediately obtain the following:

\begin{proposition}
  For every set \(I\) and every functor \(\ftF \colon \SET \to \SET\),
  the functor \(\ftPVI \ftF \colon \SET^\op \to \SET\) is isomorphic to the functor
  \(\Rels{\V}_\circ(\ftF -, I) \colon \SET^\op \to \SET\).
\end{proposition}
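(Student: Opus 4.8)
The plan is to build the isomorphism componentwise from the currying/uncurrying bijection and then to check that these components are natural, with the naturality square reducing directly to Lemma~\ref{p:55}.

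First I would spell out the two functors on objects. For a set \(X\), the functor \(\ftP_{\V^I}\ftF\) returns \(\SET(\ftF X,\V^I)\), whereas \(\Rels{\V}_\circ(\ftF-,I)\) returns \(\Rels{\V}(\ftF X,I)\). The cartesian closedness of \(\SET\) recalled above supplies, for the set \(\ftF X\), the mutually inverse bijections \((-)^\sharp\colon\Rels{\V}(\ftF X,I)\to\SET(\ftF X,\V^I)\) and \((-)^\flat\) in the opposite direction. These bijections are the candidate components \(\varphi_X\) of the desired natural isomorphism.

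It remains to verify naturality. For a function \(f\colon X\to Y\), the functor \(\ftP_{\V^I}\ftF\) acts by precomposition with \(\ftF f\), sending \(g\) to \(g\cdot\ftF f\), while \(\Rels{\V}_\circ(\ftF-,I)\) acts by precomposition with \((\ftF f)_\circ\), sending \(r\) to \(r\cdot(\ftF f)_\circ\). Thus naturality of \(\varphi\) amounts to the identity \((r\cdot(\ftF f)_\circ)^\sharp=r^\sharp\cdot\ftF f\) for every \(r\colon\ftF Y\relto I\). This is the only nontrivial point, and it follows at once from Lemma~\ref{p:55}: applying that lemma to the function \(\ftF f\colon\ftF X\to\ftF Y\) and to \(g=r^\sharp\colon\ftF Y\to\V^I\) gives \((r^\sharp\cdot\ftF f)^\flat=(r^\sharp)^\flat\cdot(\ftF f)_\circ=r\cdot(\ftF f)_\circ\); taking right-adjuncts on both sides yields exactly the required identity.

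Since each \(\varphi_X\) is a bijection and the naturality squares commute, \(\varphi\) is a natural isomorphism, so the two functors are isomorphic. I do not expect any real obstacle here: all the mathematical content sits in Lemma~\ref{p:55}, and everything else is bookkeeping with the adjunction \((-)^\sharp\dashv(-)^\flat\) together with the fact that both functors act by precomposition.
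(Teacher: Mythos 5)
Your proposal is correct and follows exactly the route the paper intends: the paper gives no explicit proof but states that the result follows immediately from Lemma~\ref{p:55} together with cartesian closedness of \(\SET\), and your argument simply makes this explicit --- the components \(\varphi_X\) are the currying bijections and the naturality square reduces, via taking adjuncts, to the identity \((g\cdot f)^\flat=g^\flat\cdot f_\circ\) of that lemma applied to \(\ftF f\) and \(r^\sharp\). Nothing is missing; the verification is sound.
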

\noindent Therefore, we mainly view predicate liftings as natural
transformations
\(\Rels{\V}_\circ(-,\kappa) \to \Rels{\V}_\circ(\ftF -,1)\) from now
on, although to keep the notation simple we still write
\(\ftPVk \to \ftPV \ftF\). Explicitly, naturality of a
transformation
\(\mu\colon\Rels{\V}_\circ(-,\kappa) \to \Rels{\V}_\circ(\ftF -,1)\)
means that for $f\colon X\to Y$ and $r\colon Y\relto \kappa$, we have
\begin{equation*}
  \mu_X(r\cdot f) = \mu_Y(r)\cdot \ftF f.
\end{equation*}
Now, it is easy to see that every lax extension induces natural
transformations with the desired domain:

\begin{proposition}
  Let \(\eF \colon \Rels{\V} \to \Rels{\V}\) be a lax functor, and
  \(\ftF \colon \SET \to \SET\) a functor.  Then~\(\eF\) is a lax
  extension of \(\ftF\) if and only if \(\eF\) agrees with \(\ftF\) on
  objects, and for every set \(I\), \(\eF\) induces a natural
  transformation
  \(\eF_{-,I} \colon \Rels{\V}_\circ (-,I) \to \Rels{\V}_\circ(\ftF -, \ftF I)\).
\end{proposition}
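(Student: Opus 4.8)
The plan is to unpack the asserted naturality into a familiar equation and then recognise the whole statement as a repackaging of Propositions~\ref{p:18} and~\ref{p:15}; essentially all of the work lies in the unpacking, the rest being formal.

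First I would compute the morphism actions of the two functors involved and read off the naturality square. Since \(\eF\) is assumed to agree with \(\ftF\) on objects, for each \(I\) the candidate component at a set \(X\) is \(\eF_{X,I}\colon\Rels{\V}(X,I)\to\Rels{\V}(\ftF X,\ftF I)\), \(r\mapsto\eF r\); this is well typed precisely because \(\eF X=\ftF X\) and \(\eF I=\ftF I\), which is why agreement on objects is needed even to \emph{state} the condition. For a function \(f\colon X\to Y\) the contravariant functor \(\Rels{\V}_\circ(-,I)\) acts as \((-)\cdot f\colon\Rels{\V}(Y,I)\to\Rels{\V}(X,I)\), while \(\Rels{\V}_\circ(\ftF-,\ftF I)\) acts as \((-)\cdot\ftF f\colon\Rels{\V}(\ftF Y,\ftF I)\to\Rels{\V}(\ftF X,\ftF I)\). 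Chasing \(s\colon Y\relto I\) around the resulting square shows that naturality of \(\eF_{-,I}\) is equivalent to
\[
  \eF(s\cdot f)=\eF s\cdot\ftF f
\]
for all \(f\colon X\to Y\) and \(s\colon Y\relto I\). Letting \(I\) range over all sets, this is exactly condition~\ref{p:16} of Proposition~\ref{p:18}.

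With this identification in hand, both implications are short. For the forward direction, a lax extension of \(\ftF\) has object part \(\ftF\) by definition, hence agrees with \(\ftF\) on objects, and condition~\ref{p:16} of Proposition~\ref{p:18} gives the displayed equation, that is, the naturality of each \(\eF_{-,I}\). For the converse, a lax functor already satisfies monotonicity~\ref{p:61} and lax functoriality~\ref{p:26}, so the only remaining ingredient for being a lax extension is the compatibility with functions encoded by~\ref{p:0}. Assuming \(\eF\) agrees with \(\ftF\) on objects and that every \(\eF_{-,I}\) is natural, we obtain precisely condition~\ref{p:16} of Proposition~\ref{p:18}; Proposition~\ref{p:15} then applies and yields that \(\eF\) is a lax extension of \(\ftF\).

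The one genuinely delicate point is the first step: correctly reading off the morphism actions of the two contravariant hom-functors, keeping track of the \(\SET^\op\) variance, and verifying that the naturality square collapses to \(\eF(s\cdot f)=\eF s\cdot\ftF f\). Once this translation is made the equivalence is purely formal, being nothing more than Propositions~\ref{p:18} and~\ref{p:15} phrased in the language of natural transformations.
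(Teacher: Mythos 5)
Your proof is correct and takes essentially the same route as the paper, whose entire proof is the citation ``By Propositions~\ref{p:18}(\ref{p:16}) and~\ref{p:15}''; you simply make explicit the routine step the paper leaves implicit, namely that naturality of \(\eF_{-,I}\) unpacks to \(\eF(s\cdot f)=\eF s\cdot\ftF f\). Filling in that translation, including the variance bookkeeping for the contravariant hom-functors, is exactly what a complete version of the paper's proof would contain.
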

\begin{proof}
  By Propositions~\ref{p:18}(\ref{p:16}) and~\ref{p:15}.
\end{proof}
\noindent This description suggests that to construct \(\kappa\)-ary
predicate liftings from a lax extension, we should construct natural
transformations
\(\Rels{\V}_\circ (\ftF -, F\kappa) \to \Rels{\V}_\circ (\ftF -, 1)\).
The easiest way to do so is to select a \(\V\)-relation
\(\fr \colon \ftF \kappa \relto 1\). Then, each component of the
resulting predicate lifting
\(\Rels{\V}_\circ (-,\kappa) \to \Rels{\V}_\circ (\ftF -, 1)\) is
computed as
\[
  f \longmapsto \fr \cdot \eF f.
\]
This motivates our notion of predicate lifting induced by a lax extension,
which, as we shall explain at the end of this section, generalizes the notion of
\emph{Moss lifting} \citep{Lea08,KL09,MV15,WS21}.

\begin{definition}
	A predicate lifting \(\mu \colon \ftPVk \to \ftPV \ftF\) is \df{induced by} a lax extension \(\eF \colon \Rels{\V} \to \Rels{\V}\), or just a \df{predicate lifting for \(\eF\)}, if there exists a \(\V\)-relation \(\fr \colon \ftF \kappa \relto 1\) such that \(\mu(f) = \fr \cdot \eF f \), for every \(\V\)-relation \(f \colon X \relto \kappa\).
	If \(\fr\) is the converse of an element \(\fk \colon 1 \to \ftF \kappa\), then we say that \(\mu\) is a \df{Moss lifting} of \(\eF\), and we emphasize this by using the notation \(\mu^\fk \colon \ftPVk \to \ftPV \ftF\).
\end{definition}

\noindent Immediately from the definition we have:

\begin{proposition}
  \label{p:64}
  Let \(\eF \colon \Rels{\V} \to \Rels{\V}\) be a lax extension. Every predicate
  lifting induced by \(\eF\) is monotone. If\/ \(\eF\) is \(\V\)-enriched, then
  every predicate lifting induced by\/ \(\eF\) is \(\V\)-enriched.
\end{proposition}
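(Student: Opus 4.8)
The plan is to prove two separate claims, corresponding to the two sentences of Proposition~\ref{p:64}. For the first claim, let $\mu$ be any predicate lifting induced by $\eF$, so that there is a $\V$-relation $\fr \colon \ftF\kappa \relto 1$ with $\mu(f) = \fr \cdot \eF f$ for every $f \colon X \relto \kappa$. Monotonicity of the component $\mu_X$ amounts to showing that $f \le g$ implies $\mu(f) \le \mu(g)$ for $\V$-relations $f, g \colon X \relto \kappa$. First I would observe that this is immediate from the building blocks: condition~\ref{p:61} gives $\eF f \le \eF g$, and then post-composition with the fixed $\V$-relation $\fr$ is monotone because $\V$-relational composition preserves the pointwise order in each variable (as recorded in the discussion of the quantaloid structure of $\Rels{\V}$). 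Chaining these two monotonicities yields $\fr \cdot \eF f \le \fr \cdot \eF g$, i.e.\ $\mu(f) \le \mu(g)$.

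For the second claim, suppose $\eF$ is $\V$-enriched and let $\mu^\fk$ be a Moss lifting, so $\fr = \fk^\circ$ for some $\fk \colon 1 \to \ftF\kappa$ and $\mu^\fk(f) = \fk^\circ \cdot \eF f$. I need to show each component is a $\V$-functor, i.e.\ that $[f,g] \le [\mu^\fk(f), \mu^\fk(g)]$ for all $f,g \colon X \relto \kappa$, where $[-,-]$ is the canonical $\V$-category structure on the relevant hom-objects from Remark~\ref{p:1}. The natural strategy is to factor the assignment $f \mapsto \fk^\circ \cdot \eF f$ into two maps and check that each is a $\V$-functor, since $\V$-functors compose. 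The first map is $\eF_{X,\kappa} \colon \Rels{\V}(X,\kappa) \to \Rels{\V}(\ftF X, \ftF\kappa)$, which is a $\V$-functor precisely by the definition of $\V$-enrichment, condition~\ref{p:37}. The second map is post-composition with the fixed converse relation, $\fk^\circ \cdot (-) \colon \Rels{\V}(\ftF X, \ftF\kappa) \to \Rels{\V}(\ftF X, 1)$.

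The crux is therefore verifying that post-composition with a \emph{function-induced} converse $\fk^\circ$ is $\V$-enriched, not merely monotone. This is exactly the point where the hypothesis that $\fk$ comes from an actual function $1 \to \ftF\kappa$ (rather than an arbitrary $\V$-relation) is used, and it is the step I expect to be the main obstacle. Here I would invoke Remark~\ref{p:40}\ref{p:41}, which states that $\fk \cdot (-) \dashv \fk^\circ \cdot (-)$ is an adjunction in $\ORD$ between the relevant hom-posets; being a right adjoint, $\fk^\circ \cdot (-)$ is in particular monotone, but more is needed. To upgrade monotonicity to $\V$-enrichment I would compute directly: unfolding the definition $[s,t] = \bigwedge_{w} \hom(s(w), t(w))$ and using the elementwise formula for composition with the converse of a function (which simply relabels/selects indices without applying $\hom$ or $\otimes$), one checks that $\hom$-distances can only increase under post-composition with $\fk^\circ$, giving $[\eF f, \eF g] \le [\fk^\circ \cdot \eF f, \fk^\circ \cdot \eF g]$. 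Combining with $\V$-enrichment of $\eF$ yields $[f,g] \le [\eF f, \eF g] \le [\mu^\fk(f), \mu^\fk(g)]$, as required. I would note in passing that the restriction to Moss liftings (converses of functions) in the second statement is essential precisely because this enrichment argument relies on $\fk^\circ$ behaving like a reindexing, a property that fails for general $\V$-relations $\fr$.
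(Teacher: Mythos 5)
Your proof is correct, and it is essentially the argument the paper intends: the paper states Proposition~\ref{p:64} without proof (``Immediately from the definition''), and your factorisation of $\mu^\fk$ into $\eF_{X,\kappa}$ followed by post-composition with $\fk^\circ$ --- with \ref{p:61} (resp.\ \ref{p:37}) handling the first factor and a direct computation handling the second --- is exactly that immediate argument spelled out.

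However, your closing aside is mathematically wrong, and since it purports to explain \emph{why} the second statement is restricted to Moss liftings, it is worth correcting. Post-composition with an \emph{arbitrary} fixed $\V$-relation $\fr \colon \ftF\kappa \relto 1$ is also a $\V$-functor, not merely in the reindexing case $\fr = \fk^\circ$: writing $\fr(\fy)$ for $\fr(\fy,\ast)$, for all $s,t \colon \ftF X \relto \ftF\kappa$ and every $\fx \in \ftF X$,
\[
  [s,t] \otimes (\fr \cdot s)(\fx)
  \;=\; \bigvee_{\fy} [s,t] \otimes s(\fx,\fy) \otimes \fr(\fy)
  \;\le\; \bigvee_{\fy} t(\fx,\fy) \otimes \fr(\fy)
  \;=\; (\fr \cdot t)(\fx),
\]
using that $[s,t] \otimes s(\fx,\fy) \le t(\fx,\fy)$ by the hom-adjunction together with commutativity of $\otimes$, and that $[s,t] \otimes -$ preserves suprema; hence $[s,t] \le [\fr \cdot s, \fr \cdot t]$. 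This is the general quantaloid fact that composition with a fixed $\V$-relation is $\V$-functorial in each variable, and your column-selection computation for $\fk^\circ$ is just a special case of it. Consequently, \emph{every} predicate lifting induced by a $\V$-enriched lax extension is $\V$-enriched, and the restriction to Moss liftings in the proposition's second sentence is not forced by the enrichment step, contrary to your diagnosis that $\fk^\circ$ ``behaving like a reindexing'' is essential and ``fails for general $\V$-relations $\fr$.'' This does not affect the validity of your proof of the stated result, which stands; only the passing claim about necessity is false.
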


\begin{example}
  \label{p:200}
  Consider the lax extension of the covariant powerset functor
  \(\ftP \colon \SET \to \SET\) to \(\REL\) given by
  \[
    B (\widehat{\ftP}r) C \iff \forall c \in C, \exists b \in B,\, b \mathrel{r} c.
  \]
  The unary Moss lifting for \(\widehat{\ftP}\) determined by the element \(1 \in \ftP 1\) is the predicate lifting \(\Diamond \colon \ftPV \to \ftPV\ftP\) whose \(X\)-component is defined by
  \[
    A \longmapsto \{B \subseteq X \mid A \cap B \neq \varnothing\}.
  \]
  The Moss lifting for the dual extension of \(\widehat{\ftP}\) determined by the element \(1 \in \ftP 1\) is the predicate lifting \(\Box \colon \ftPV \to \ftPV\ftP\) whose \(X\)-component is computed as
  \[
    A \longmapsto \{B \subseteq X \mid B \subseteq A\}.
  \]
  On the other hand, the unary Moss lifting for the Barr extension \(\bar{\ftP}\) of \(\ftP\) (the symmetrization of \(\widehat{\ftP}\)) determined by the element \(1 \in \ftP 1\) is the predicate lifting \(\nabla \colon \ftPV \to \ftPV\ftP\) whose \(X\)-component is defined by
  \[
    A \longmapsto \{B \subseteq X \mid B \neq\varnothing \wedge  B \subseteq A\}.
  \]
\end{example}

As mentioned in Remark~\ref{p:43}, by the Yoneda Lemma, \(\kappa\)-ary predicate liftings for a functor are completely determined by their action on the identity function on \(\V^\kappa\) and the action of the functor.
In the relational point of view, this means that \(\kappa\)-ary predicate liftings are completely determined by the image of the evaluation relation \(\ev_\kappa \colon \V^\kappa \relto \kappa\).
In the following, we will see that \(\kappa\)-ary predicate liftings \emph{induced by a lax extension} are completely determined by their action on the identity \(\V\)-relation \(1_\kappa \colon \kappa \to \kappa\) and the action of the lax extension.
This characterization makes it easy to construct, detect and manipulate predicate liftings induced by lax extensions (see Corollary~\ref{cor:induced}, Lemma~\ref{p:29} and Proposition~\ref{p:69}).

\begin{lemma}
  \label{p:52}
  Let \(\mu \colon \ftPVk \to \ftPV \ftF\) be a predicate lifting
  and \(\eF \colon \Rels{\V} \to \Rels{\V}\) a lax extension. Then the following
   are equivalent:
  \begin{tfae}
    \item \label{p:13} \(\mu\) is induced by \(\eF\);
    \item \label{p:31} \(\mu(\ev_\kappa) = \fr \cdot \eF\ev_\kappa\), for some
      \(\fr \colon \ftF \kappa \relto 1\);
    \item \label{p:81} \(\mu(\ev_\kappa) = \mu(1_\kappa) \cdot \eF\ev_\kappa\);
    \item \label{p:30}
    \(
      \mu(\ev_\kappa)
      = (\mu(\ev_\kappa) \multimapdotinv \eF\ev_\kappa) \cdot \eF\ev_\kappa.
    \)
  \end{tfae}
\end{lemma}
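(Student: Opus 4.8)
The plan is to treat all four conditions as statements about a single piece of universal data, namely the value $\mu(\ev_\kappa)\in\Rels{\V}(\ftF\V^\kappa,1)$, and to move between them using the Yoneda lemma together with the universal property of the extension $\multimapdotinv$. The key preliminary observation is that, under the identification $\Rels{\V}(X,\kappa)\cong\SET(X,\V^\kappa)$, the domain functor $\Rels{\V}_\circ(-,\kappa)$ is representable, represented by $\V^\kappa$, and its universal element corresponds to $\ev_\kappa$. Hence any natural transformation out of $\Rels{\V}_\circ(-,\kappa)$ is determined by its value on $\ev_\kappa$. With this in mind, \ref{p:13}$\Rightarrow$\ref{p:31} is immediate by instantiating the defining equation at $f=\ev_\kappa$, and \ref{p:81}$\Rightarrow$\ref{p:31} is the special case $\fr=\mu(1_\kappa)$.

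The two substantive steps are \ref{p:31}$\Rightarrow$\ref{p:13} and the equivalence \ref{p:31}$\Leftrightarrow$\ref{p:30}. For the former --- which I expect to be the crux --- I would first record that, for a fixed $\fr\colon\ftF\kappa\relto 1$, the assignment $f\mapsto\fr\cdot\eF f$ is itself a natural transformation $\Rels{\V}_\circ(-,\kappa)\to\Rels{\V}_\circ(\ftF -,1)$: naturality reduces to the identity $\fr\cdot\eF(f\cdot h)=(\fr\cdot\eF f)\cdot\ftF h$ for a function $h$, which is exactly Proposition~\ref{p:18}(\ref{p:16}). Given this, both $\mu$ and $f\mapsto\fr\cdot\eF f$ are natural transformations with the same representable domain, so by Yoneda they coincide as soon as they agree on $\ev_\kappa$; and that agreement is precisely \ref{p:31}. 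Thus \ref{p:31} forces $\mu(f)=\fr\cdot\eF f$ for all $f$, i.e. \ref{p:13}. The equivalence \ref{p:31}$\Leftrightarrow$\ref{p:30} is then purely formal: $\mu(\ev_\kappa)\multimapdotinv\eF\ev_\kappa$ is by definition the largest $\fr$ with $\fr\cdot\eF\ev_\kappa\le\mu(\ev_\kappa)$, so if some $\fr$ realises the equation in \ref{p:31} then $\fr\le\mu(\ev_\kappa)\multimapdotinv\eF\ev_\kappa$, whence $\mu(\ev_\kappa)=\fr\cdot\eF\ev_\kappa\le(\mu(\ev_\kappa)\multimapdotinv\eF\ev_\kappa)\cdot\eF\ev_\kappa\le\mu(\ev_\kappa)$, giving \ref{p:30}; the converse is read off by taking $\fr=\mu(\ev_\kappa)\multimapdotinv\eF\ev_\kappa$.

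It remains to close the cycle with \ref{p:13}$\Rightarrow$\ref{p:81}, which upgrades an arbitrary witness $\fr$ to the canonical one $\mu(1_\kappa)$. Writing $\mu(1_\kappa)=\fr\cdot\eF 1_\kappa$ and $\mu(\ev_\kappa)=\fr\cdot\eF\ev_\kappa$, the inequality $\mu(1_\kappa)\cdot\eF\ev_\kappa\le\mu(\ev_\kappa)$ follows from \ref{p:26} via $\eF 1_\kappa\cdot\eF\ev_\kappa\le\eF\ev_\kappa$, while the reverse inequality follows from \ref{p:0} via $1_{\ftF\kappa}\le\eF 1_\kappa$. The only genuine subtlety in the whole argument is the Yoneda step: one must resist computing $\mu(f)$ elementwise and instead recognise that selecting $\fr$ yields a bona fide natural transformation, so that the single equation \ref{p:31} automatically propagates to every $f$. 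Everything else is either a direct instantiation or a one-line manipulation of extensions and the lax-extension axioms \ref{p:26} and \ref{p:0}.
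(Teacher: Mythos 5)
Your proof is correct and takes essentially the same route as the paper's: the core ingredients are identical, namely the naturality/Yoneda argument via \(f=\ev_\kappa\cdot f^\sharp\) and Proposition~\ref{p:18}(\ref{p:16}) to propagate the single equation at \(\ev_\kappa\) to all of \ref{p:13}, the adjointness of \(\multimapdotinv\) for the equivalence with \ref{p:30}, and the lax-extension axioms for the step involving \(\mu(1_\kappa)\). The only differences are cosmetic: you wire the implications with \ref{p:31} as a hub rather than the paper's cycle \ref{p:13}\(\Rightarrow\)\ref{p:31}\(\Rightarrow\)\ref{p:81}\(\Rightarrow\)\ref{p:30}\(\Rightarrow\)\ref{p:13}, and you establish \(\mu(\ev_\kappa)=\mu(1_\kappa)\cdot\eF\ev_\kappa\) by two inequalities from \ref{p:26} and \ref{p:0} where the paper invokes the strict equalities of Proposition~\ref{p:18}.
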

\begin{proof}
  Let \(f \colon X \relto \kappa\) be a \(\V\)-relation.
  \begin{description}
    \item[\ref{p:13} \(\Rightarrow\)~\ref{p:31}] By definition.
    \item[\ref{p:31} \(\Rightarrow\)~\ref{p:81}]
      \(
        \mu(\ev_\kappa)
        = \fr \cdot \eF\ev_\kappa
        = \fr \cdot \eF1_\kappa \cdot \eF\ev_\kappa
        = \mu(1_\kappa) \cdot \eF\ev_\kappa.
      \)
    \item[\ref{p:81} \(\Rightarrow\)~\ref{p:30}] By adjointness, the
      assumption implies that
      $\mu(1_\kappa)\le\mu(\ev_\kappa) \multimapdotinv \eF\ev_\kappa$,
      so we obtain
      \( \mu(\ev_\kappa) = \mu(1_\kappa) \cdot \eF\ev_\kappa \leq
      (\mu(\ev_\kappa) \multimapdotinv \eF\ev_\kappa) \cdot
      \eF\ev_\kappa \leq \mu(\ev_\kappa).  \)
    \item[\ref{p:30} \(\Rightarrow\)~\ref{p:13}] Put
      \(\fr = \mu(\ev_\kappa) \multimapdotinv \eF\ev_\kappa\).  Then, since
      \(\mu\) is a natural transformation,
        \begin{align*}
          \mu(f)
          = \mu(\ev_\kappa \cdot f^\sharp)
          & = \mu(\ev_\kappa) \cdot \ftF f^\sharp \\
          & = \fr \cdot \eF\ev_\kappa \cdot \ftF f^\sharp
          = \fr \cdot \eF f
        \end{align*}
  where the last equality is by Proposition~\ref{p:18}.\qedhere
  \end{description}
\end{proof}
\noindent We crystallize the above into a Yoneda-style
characterization of predicate liftings that is analogous to the one
mentioned in Remark~\ref{p:43} but uses the lax extension instead of
the underlying set functor:

\begin{theorem}
  \label{p:75}
  A predicate lifting \(\mu \colon \ftPVk \to \ftPV\ftF\) is induced by a lax extension \(\eF \colon \Rels{\V} \to \Rels{\V}\) if and only if it is defined from $\mu(1_\kappa)$ (in the \(\V\)-relational view) by
  \[
    \mu(f)= \mu(1_\kappa) \cdot \eF(f).
  \]
\end{theorem}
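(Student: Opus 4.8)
The plan is to derive Theorem~\ref{p:75} as a direct corollary of the equivalences established in Lemma~\ref{p:52}, since the statement is essentially a restatement of the equivalence \ref{p:13}\(\Leftrightarrow\)\ref{p:81} together with the explicit formula extracted in the proof of \ref{p:30}\(\Rightarrow\)\ref{p:13}. First I would prove the forward direction: assuming \(\mu\) is induced by \(\eF\), Lemma~\ref{p:52} gives condition \ref{p:81}, namely \(\mu(\ev_\kappa) = \mu(1_\kappa) \cdot \eF\ev_\kappa\). I then reproduce the Yoneda-style computation from the final step of that lemma's proof, using naturality of \(\mu\) and the identity \(f = \ev_\kappa \cdot f^\sharp\) for any \(\V\)-relation \(f \colon X \relto \kappa\), to obtain
\[
  \mu(f) = \mu(\ev_\kappa \cdot f^\sharp) = \mu(\ev_\kappa) \cdot \ftF f^\sharp = \mu(1_\kappa) \cdot \eF\ev_\kappa \cdot \ftF f^\sharp = \mu(1_\kappa) \cdot \eF f,
\]
where the last equality uses Proposition~\ref{p:18}(\ref{p:16}) to absorb \(\ftF f^\sharp = \eF f^\sharp\) into \(\eF\ev_\kappa \cdot \eF f^\sharp = \eF(\ev_\kappa \cdot f^\sharp) = \eF f\).

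For the converse, I would assume \(\mu\) is defined by the assignment \(f \mapsto \mu(1_\kappa) \cdot \eF f\) and simply observe that this exhibits \(\mu\) as induced by \(\eF\) in the sense of the definition, taking the witnessing \(\V\)-relation to be \(\fr = \mu(1_\kappa) \colon \ftF\kappa \relto 1\). Indeed, since \(1_\kappa \colon \kappa \relto \kappa\) is a \(\V\)-relation of the correct type, \(\mu(1_\kappa)\) lives in \(\Rels{\V}_\circ(\ftF\kappa, 1) = \Rels{\V}(\ftF\kappa, 1)\), which is exactly the home of the witnessing relation \(\fr\). The defining equation \(\mu(f) = \fr \cdot \eF f\) then holds by hypothesis.

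I do not expect a genuine obstacle here, as the theorem is a packaging of already-proven facts; the only point requiring care is type-checking. The subtle step is confirming that \(\mu(1_\kappa)\) has the right type to serve as \(\fr\): under the identification of predicate liftings as natural transformations \(\Rels{\V}_\circ(-,\kappa) \to \Rels{\V}_\circ(\ftF-,1)\), the \(\kappa\)-component \(\mu_\kappa\) sends \(1_\kappa \in \Rels{\V}(\kappa,\kappa)\) to an element of \(\Rels{\V}(\ftF\kappa, 1)\), which is precisely a \(\V\)-relation \(\ftF\kappa \relto 1\). Thus the formula \(f \mapsto \mu(1_\kappa) \cdot \eF f\) is well-typed and the two directions close the equivalence. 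The whole argument is therefore a short deduction rather than a fresh computation, and I would present it as such, citing Lemma~\ref{p:52} and Proposition~\ref{p:18}(\ref{p:16}) to avoid repeating the Yoneda manipulation in full.
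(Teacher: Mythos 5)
Your proposal is correct and matches the paper's own treatment: the paper gives no separate proof of Theorem~\ref{p:75}, deriving it exactly as you do from the calculation in the final step of the proof of Lemma~\ref{p:52} (with the witnessing relation \(\fr = \mu(1_\kappa)\), whose type-check you rightly note), while the converse is immediate from the definition of a predicate lifting induced by a lax extension.
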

\begin{proof}
  `If' is immediate by Condition~\ref{p:81} in Lemma~\ref{p:52}; we
  prove `only if'. Varying the calculation in the proof of
  \ref{p:30}$\implies$\ref{p:13} in Lemma~\ref{p:52} and using
  condition~\ref{p:81} of the lemma, we have
  \begin{equation*}
    \mu(f)
    = \mu(\ev_\kappa \cdot f^\sharp)
    = \mu(\ev_\kappa) \cdot \ftF f^\sharp \overset{\text{\ref{p:81}}}{=}\mu(1_\kappa) \cdot \eF\ev_\kappa\cdot \ftF f^\sharp=\mu(1_\kappa)\cdot \eF f. \qedhere
  \end{equation*}
\end{proof}

\begin{corollary}\label{cor:induced}
  A predicate lifting \(\mu \colon \ftPVk \to \ftPV\ftF\) is induced by a lax
  extension \(\eF \colon \Rels{\V} \to \Rels{\V}\) if and only if for every
  relation \(r \colon X \relto Y\) and every function \(g \colon \kappa \to Y\),
  \[
    \mu(g^\circ \cdot r) = \mu(g^\circ) \cdot \eF r.
  \]
\end{corollary}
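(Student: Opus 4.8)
The plan is to derive both implications directly from Theorem~\ref{p:75}, which characterises the predicate liftings induced by \(\eF\) as exactly those of the form \(f\mapsto\mu(1_\kappa)\cdot\eF f\). The only additional ingredient is that lax extensions strictly preserve composites of \(\V\)-relations with converses of functions, i.e.\ Proposition~\ref{p:18}(\ref{p:20}).

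For the ``only if'' direction, I assume \(\mu\) is induced by \(\eF\) and compute \(\mu(g^\circ\cdot r)\) for a function \(g\colon\kappa\to Y\) and a \(\V\)-relation \(r\colon X\relto Y\). Applying Theorem~\ref{p:75} to the \(\V\)-relation \(g^\circ\cdot r\colon X\relto\kappa\) gives \(\mu(g^\circ\cdot r)=\mu(1_\kappa)\cdot\eF(g^\circ\cdot r)\). Since \(g\) is a function, Proposition~\ref{p:18}(\ref{p:20}) lets me split \(\eF(g^\circ\cdot r)=\eF(g^\circ)\cdot\eF r\), and then a second application of Theorem~\ref{p:75} --- this time to \(g^\circ\colon Y\relto\kappa\) --- recognises the prefix \(\mu(1_\kappa)\cdot\eF(g^\circ)\) as \(\mu(g^\circ)\). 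Chaining these equalities yields \(\mu(g^\circ\cdot r)=\mu(g^\circ)\cdot\eF r\), as required.

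For the ``if'' direction, I exploit that the hypothesised identity, despite quantifying over all \(Y\), \(r\) and \(g\), already forces the characterising formula of Theorem~\ref{p:75} once it is specialised appropriately. Taking \(Y=\kappa\), \(g=\id_\kappa\) and \(r=f\colon X\relto\kappa\), and observing that \(\id_\kappa^\circ=1_\kappa\) so that \(g^\circ\cdot r=f\), the hypothesis reads \(\mu(f)=\mu(1_\kappa)\cdot\eF f\). By Theorem~\ref{p:75} this precisely says that \(\mu\) is induced by \(\eF\).

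The proof is thus essentially bookkeeping on top of Theorem~\ref{p:75}; the only point that requires care is the ``only if'' direction, where one must resist proving \(\mu(g^\circ)=\mu(1_\kappa)\cdot\eF(g^\circ)\) by hand and instead recall that it is just Theorem~\ref{p:75} applied to the relation \(g^\circ\). There is no substantive analytic obstacle: the strict preservation property in Proposition~\ref{p:18} does all the work of turning the functorial formula into the stated pointwise one.
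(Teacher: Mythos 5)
Your proof is correct and is exactly the argument the paper intends: the corollary is stated without proof as an immediate consequence of Theorem~\ref{p:75}, and your derivation --- splitting \(\eF(g^\circ\cdot r)=\eF(g^\circ)\cdot\eF r\) via Proposition~\ref{p:18}(\ref{p:20}) for the ``only if'' direction, and specialising to \(g=\id_\kappa\) (so that \(g^\circ=1_\kappa\)) for the ``if'' direction --- is the evident bookkeeping the authors leave implicit. No gaps.
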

\begin{proof}
  For `if', just apply the assumption to $g=\id_\kappa$, and use
  Theorem~\ref{p:75}. For `only if', apply Theorem~\ref{p:75} to
  $g^\circ \cdot r$, obtaining
  \begin{equation*}
    \mu(g^\circ \cdot r)= \mu(1_\kappa) \cdot \eF(g^\circ \cdot r)=\mu(1_\kappa) \cdot \eF(g^\circ) \cdot \eF r = \mu(g^\circ)\cdot \eF r
  \end{equation*}
  where the second equality is by Proposition~\ref{p:18}.
\end{proof}

\begin{remark}
  Given a \(\kappa\)-ary predicate lifting \(\mu \colon \ftPVk \to \ftPV\ftF\) induced by a lax extension \(\eF \colon \Rels{\V} \to \Rels{\V}\) of \(\ftF\), it follows from Theorem~\ref{p:75} that \(\mu(1_\kappa) \colon \ftF \kappa \relto 1\) is the largest \(\V\)-relation that induces \(\mu\), but it may not be the only \(\V\)-relation with this property if \(\eF\) does not preserve identities.
  For instance, in Example~\ref{p:200} we have seen that \(\Box \colon \contrapow_2 \to \contrapow_2\ftP\) is the Moss lifting given by \(1 \in \ftP 1\) for the dual lax exntesion of \(\widehat{\ftP}\) but \(\Box(1_1) = \top_{2,1}\).
\end{remark}

In Section~\ref{sec:pl->lax} we will see that every lax extension is induced by its class of Moss liftings, and we turn now to the question of characterizing the Moss liftings induced by a lax extension in the usual point of view.

\begin{lemma}
  \label{p:69}
  Let \(\eF \colon \Rels{\V} \to \Rels{\V}\) be a lax extension of a functor \(\ftF \colon \SET \to \SET\),
  and \(\mu \colon \ftPVk \to \ftPV\ftF\) a predicate lifting induced by \(\eF\).
  Let \(h \colon \V^\kappa \relto \V^\kappa\) denote the structure of the \(\V\)-category
  \({(\V^\kappa)}^\op\). Then,
  \begin{equation*}
    \mu(\ev_\kappa) = \mu(1_\kappa) \cdot {(\ftF 1_\kappa^\sharp)}^\circ \cdot \eF h.
  \end{equation*}
\end{lemma}
\begin{proof}
  Observe that \(\ev_\kappa = {(1_\kappa^\sharp)}^\circ \cdot h\).
  Thus, we have
  \begin{equation*}
    \mu(\ev_\kappa)=\mu({(1_\kappa^\sharp)}^\circ \cdot h)=\mu({(1_\kappa^\sharp)}^\circ)\cdot\eF(h)=\mu(1_\kappa) \cdot \eF ((1_\kappa^\sharp)^\circ) \cdot \eF h=\mu(1_\kappa) \cdot {(\ftF 1_\kappa^\sharp)}^\circ \cdot \eF h,
  \end{equation*}
  using Corollary~\ref{cor:induced}, Theorem~\ref{p:75}, and Proposition~\ref{p:18}, in that order.
\end{proof}

\begin{theorem}
  Let \(\kappa\) be a cardinal and \(h \colon \V^\kappa \relto \V^\kappa\) be the structure of the \(\V\)-category \({(\V^\kappa)}^\op\) (see Remark~\ref{p:1}).
  Furthermore, let \(\eF \colon \Rels{\V} \to \Rels{\V}\) be a lax extension of a functor \(\ftF \colon \SET \to \SET\).
  The \(\kappa\)-ary Moss liftings induced by \(\eF\) correspond precisely to the representable \(\V\)-functors \((\ftF\V^\kappa, \eF h) \to \V^\op\) with representing objects in the image of the map \(\ftF1_\kappa^\sharp \colon \ftF\kappa \to \ftF\V^\kappa\).
\end{theorem}
\begin{proof}
  Let \(\mu^\fk \colon \ftPVk \to \ftPV\ftF\) be a Moss lifting induced by \(\eF\).
  Note that \(\mu(1_\kappa) \cdot {(\ftF 1_\kappa^\sharp)}^\circ \cdot \eF h = \fk^\circ \cdot {(\ftF 1_\kappa^\sharp)}^\circ \cdot \eF h\).
  Hence, from Lemma~\ref{p:69}, we conclude that under the isomorphism between \(\kappa\)-ary predicate liftings for \(\ftF\) and maps of type \(\ftF\V^\kappa \to \V\) (see Remark~\ref{p:43}) the predicate lifting \(\mu^\fk\) corresponds to the map \(\ftF\V^\kappa \to \V\) defined by
  \[
    \label{p:201}
    \fv \longmapsto \eF h(\fv,\ftF 1_\kappa^\sharp(\fk)).
  \]
  That is, \(\mu^\fk\) corresponds to the representable \(\V\)-functor \((\ftF\V^\kappa, \eF h) \to \V^\op\) with representing object \(\ftF1_\kappa^\sharp(\fk)\).
  On the other hand, this also makes it clear that every representable \(\V\)-functor \((\ftF\V^\kappa, \eF h) \to \V^\op\) with representing object in the image of the map \(\ftF1_\kappa^\sharp \colon \ftF\kappa \to \ftF\V^\kappa\) corresponds to a Moss lifting.
\end{proof}

By Lemma~\ref{p:52}, the predicate liftings induced by
a lax extension \(\eF \colon \Rels{\V} \to \Rels{\V}\) are determined by the
fixed points of the monotone map
\[
  ( - \multimapdotinv \eF\ev_\kappa) \cdot \eF\ev_\kappa
  \colon \Rels{\V}(\ftF\V^\kappa,1) \longrightarrow \Rels{\V}(\ftF\V^\kappa,1),
\]
which are precisely the \(\V\)-relations that can be factorized as
\(\fr \cdot \eF\ev_\kappa\) for some \(\V\)-relation
\(\fr \colon \ftF\kappa \relto 1\).  Therefore, the least fixed point
is obtained by composing the \(\V\)-relation
\(\eF\ev_\kappa \colon \ftF\V^\kappa \relto \ftF\kappa\) with the
\(\V\)-relation \(\bot_{\ftF\kappa,1} \colon \ftF\kappa \relto 1\),
the constant function into \(\bot\). The corresponding \(\kappa\)-ary predicate
lifting sends a \(\V\)-relation \(f \colon X \relto \kappa\) to the
\(\V\)-relation \(\bot_{\ftF X,1} \colon \ftF X \relto 1\), the constant
function into \(\bot\). This is the smallest predicate lifting with respect to
\df{the pointwise order of \(\kappa\)-ary predicate liftings}, which is defined by
\[
  \mu \leq \mu' \iff \forall f \colon X \relto \kappa,\,\mu(f) \leq \mu'(f).
\]
Similarly, the greatest fixed point is obtained by composing the \(\V\)-relation
\(\eF\ev_\kappa \colon \ftF\V^\kappa \relto \ftF\kappa\) with the
\(\V\)-relation \(\top_{\ftF\kappa,1}\), the constant function into \(\top\).
However, the greatest predicate lifting induced by a lax extension is not
necessarily the greatest predicate lifting.

\begin{example}
  \label{p:71}
  Consider the identity functor \(1_\REL \colon \REL \to \REL\) which
  is a lax extension to \(\REL\) of the identity functor
  \(1_\SET \colon \SET \to \SET\). Then, the greatest unary predicate
  lifting of \(1_\SET\) induced by \(1_\REL\) is the identity natural
  transformation \(\contrapow_2 \to \contrapow_2\), while the greatest unary
  predicate lifting for \(1_\SET\) sends a relation \(X \relto 1\) to
  the greatest relation \(X \relto 1\).
\end{example}

The definition of Moss lifting of a lax extension used in the literature is seemingly different from the one presented here \citep{Lea08,KL09,MV15,WS20}. To conclude this section, we show that both definitions are computed in the same way.

We recall that every accessible \(\SET\)-functor admits a presentation
(e.g.~\citep[Proposition 3.9]{AGT10} and Theorem~\ref{d:thm:2}) as
follows. A \df{\(\lambda\)-ary presentation} of a
\(\lambda\)-accessible functor \(\ftF \colon \SET \to \SET\) consists
of a $\lambda$-ary \emph{signature} \(\Sigma\), that is, a set of
operations of arity less than \(\lambda\), and for each operation
\(\sigma \in \Sigma\) of arity \(\kappa\), a natural transformation
\(\sigma \colon {(-)}^\kappa \to \ftF\) such that, for every
\(X \in \SET\), the cocone
\({(\sigma_X \colon X^\kappa \to \ftF X)}_{\sigma \in \Sigma}\) is
epi. Every functor \(\ftF \colon \SET \to \SET\) has a
\(\lambda\)-accessible subfunctor
\(\ftF_\lambda \colon \SET \to \SET\) \citep{AGT10} that maps a
set \(X\) to the set
\[
  \ftF_\lambda X = \bigcup \{\ftF i[\ftF Y] \mid i \colon Y \to X \text{ is a
    subset inclusion and } |Y| < \lambda\}.
\]

The notion of Moss lifting of a lax extension \(\eF \colon
\Rels{\V} \to \Rels{\V}\) was introduced by \citet{MV15}.  They think of
predicate liftings for a functor \(\ftF \colon \SET \to \SET\) as natural
transformations
\[
  {(\ftPV)}^n \longrightarrow \ftPV \ftF,
\]
where \(n\) is a natural number and \({(\ftPV})^n\) denotes the
\(n\)-fold product of \(\ftPV\). Then, given a lax extension
\(\eF \colon \Rels{\V} \to \Rels{\V}\) and a finitary presentation of
\(\ftF_\omega\) with signature \(\Sigma\), each
\(n\)-ary \(\sigma \in \Sigma\) induces a \(n\)-ary predicate lifting for
\(\ftF\) --- a Moss lifting --- whose \(X\)-component is defined by
the assignment
\[
  (f_1, \dots, f_n )
  \longmapsto (\fx \mapsto \eF({\ev_X}^\circ)(\fx, \sigma_{\ftPV X}(f_1, \dots, f_n)).
\]
Of course, the functors \(\ftPVk\) and \({(\ftPV)}^\kappa\) are isomorphic,
and the next proposition shows that both definitions of Moss lifting agree.

\begin{proposition}
  Let \(\eF \colon \Rels{\V} \to \Rels{\V}\) be a lax extension of a functor
  \(\ftF \colon \SET \to \SET\), \(\Sigma\) the signature of a \(\lambda\)-ary
  presentation of \(\ftF_\lambda\), and \(\kappa\) a cardinal.
  Let \(\sigma \colon {(-)}^\kappa \to \ftF_\lambda \to \ftF\) be the natural
  transformation induced by a \(\kappa\)-ary operation symbol
  \(\sigma \in \Sigma\).
  Then, for every \(\V\)-relation \(f \colon X \relto \kappa\),
  \[
    \mu^{\sigma_\kappa(1_\kappa)} (f)
    = {\sigma_{P_\V X} (\mate{f})}^\circ \cdot \eF({\ev}^\circ),
  \]
  where \(\mate{f}\) represents the \(\V\)-relation \(f\) as a function of
  type \(\kappa \to \ftPV X\).
\end{proposition}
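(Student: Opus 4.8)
The plan is to unfold both sides into expressions built from \(\eF\), converses of functions, and the naturality of \(\sigma\), and then to match them using Proposition~\ref{p:18}(\ref{p:20}). Writing \(\fk = \sigma_\kappa(1_\kappa) \colon 1 \to \ftF\kappa\), the left-hand side is by definition \(\mu^\fk(f) = \fk^\circ \cdot \eF f\), so the task reduces to proving the identity
\[
  \fk^\circ \cdot \eF f = {\sigma_{\ftPV X}(\mate f)}^\circ \cdot \eF(\ev^\circ).
\]

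First I would analyse the factor \(\sigma_{\ftPV X}(\mate f)\) via naturality. Since \(\mate f \colon \kappa \to \ftPV X\) and \(\sigma \colon {(-)}^\kappa \to \ftF\) is natural, the naturality square evaluated at the element \(1_\kappa \in \kappa^\kappa\) yields \(\sigma_{\ftPV X}(\mate f) = \ftF(\mate f) \cdot \fk\), an equality of elements \(1 \to \ftF(\ftPV X)\). Taking converses and using that converse reverses composition gives \({\sigma_{\ftPV X}(\mate f)}^\circ = \fk^\circ \cdot {(\ftF \mate f)}^\circ\).

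Next I would collapse the tail \({(\ftF\mate f)}^\circ \cdot \eF(\ev^\circ)\). Applying Proposition~\ref{p:18}(\ref{p:20}) with the function \(\mate f\) and the relation \(\ev^\circ\) turns this into \(\eF(\mate f^\circ \cdot \ev^\circ)\). It then remains to verify the elementary uncurrying identity \(\mate f^\circ \cdot \ev^\circ = f\): this is a one-line pointwise computation, since composing with the graph of \(\mate f\) collapses the supremum to the single term \(h = \mate f(c)\), and \(\ev^\circ(x, \mate f(c)) = \mate f(c)(x) = f(x,c)\). Chaining these facts gives
\[
  {\sigma_{\ftPV X}(\mate f)}^\circ \cdot \eF(\ev^\circ)
  = \fk^\circ \cdot {(\ftF\mate f)}^\circ \cdot \eF(\ev^\circ)
  = \fk^\circ \cdot \eF(\mate f^\circ \cdot \ev^\circ)
  = \fk^\circ \cdot \eF f,
\]
which is exactly the left-hand side.

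The only genuine subtlety is the bookkeeping around the two transposes of \(f\): the definition of \(\mu^\fk\) is phrased through \(\eF f\) directly, whereas the classical Moss lifting is phrased through \(\mate f \colon \kappa \to \ftPV X\) and the evaluation relation. The crux is therefore recognising that naturality of \(\sigma\) at \(\mate f\) is precisely what converts between \(\sigma_\kappa(1_\kappa)\) and \(\sigma_{\ftPV X}(\mate f)\), and that Proposition~\ref{p:18}(\ref{p:20}) together with \(\mate f^\circ \cdot \ev^\circ = f\) reabsorbs the evaluation relation into \(\eF f\). Once these two observations are in place, the rest is purely formal.
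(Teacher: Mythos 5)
Your proof is correct and follows essentially the same route as the paper's: both rest on the identity \(f = \mate{f}^\circ \cdot \ev_X^\circ\), Proposition~\ref{p:18}(\ref{p:20}) to pull \({(\ftF\mate{f})}^\circ\) out of \(\eF\), and naturality of \(\sigma\) evaluated at \(1_\kappa\) (via \(\mate{f}^\kappa(1_\kappa)=\mate{f}\)). The only difference is cosmetic: you expand the right-hand side elementwise, whereas the paper starts from \(\mu^{\sigma_\kappa(1_\kappa)}(f)\) and carries the same steps through converse-composites.
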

\begin{proof}
  Note that by definition of \({\ev_X}^\circ\),
  \(f = {\mate{f}}^\circ \cdot {\ev_X}^\circ\).
  Hence, \(\eF f = {\ftF \mate{f}}^\circ \cdot \eF(\ev^\circ)\).
  Moreover, \({\mate{f}}^\kappa (1_\kappa) = \mate{f}\) by definition of the
  functor \({(-)}^\kappa \colon \SET \to \SET\); or with
  \(\id_\kappa \colon 1 \to \kappa^\kappa\)
  denoting the function that selects the identity morphism on \(\kappa\),
  \({\mate{f}}^\kappa \cdot \id_\kappa = \mate{f}\).
  Therefore, since \(\sigma \colon {(-)}^\kappa \to \ftF\) is a natural
  transformation,
  \begin{align*}
    \mu^{\sigma_\kappa(1_\kappa)} (f)
    & = {\id_\kappa}^\circ \cdot \sigma_\kappa^\circ \cdot \eF f \\
    & = {\id_\kappa}^\circ \cdot \sigma_\kappa^\circ \cdot {\ftF \mate{f}}^\circ \cdot \eF ({\ev_X}^\circ) \\
    & = {\id_\kappa}^\circ \cdot {{\mate{f}}^\kappa}^\circ \cdot \sigma_{\ftPV X}^\circ \cdot \eF ({\ev_X}^\circ) \\
    & = {\sigma_{\ftPV X} (\mate{f})}^\circ \cdot \eF({\ev_X}^\circ).
    \qedhere
  \end{align*}
\end{proof}
\noindent
Therefore, \(\kappa\)-ary Moss liftings are constructed in a very
simple way.  From the usual point of view, a Moss lifting
\(\mu^\fk \colon \ftPVk \to \ftPV \ftF\) of a lax extension
\(\eF \colon \Rels{\V} \to \Rels{\V}\), for
\(\fk \colon 1 \to \ftF \kappa\), is just the map that sends a function
\(f \colon X \to \V^\kappa\) (with left-adjunct
$f^\flat\colon X\relto\kappa$) to the function
\(\eF f^\flat (-,\fk) \colon \ftF X \to \V\). %

\begin{remark}
  \citet{MV15} consider lax extensions of \(\SET\)-functors that are not necessarily finitary, however, to construct Moss liftings they restrict the functor to its finitary part.
  That is, the element \(\fk\) inducing a Moss lifting $\mu^\fk$ belongs to \(\ftF_\omega \kappa \subseteq \ftF \kappa\).
\end{remark}

In terms of maps of type \(\ftF \V^\kappa \to \V\),
Proposition~\ref{p:69} tells us that \(\mu^\fk\) is determined by the map
\(\mu(1_{\V^\kappa}) \colon \ftF\V^\kappa \to \V\) defined by
\[
  \fx \longmapsto \eF h(\fx,\ftF 1_\kappa^\sharp(\fk)),
\]
where \(h \colon \V^\kappa \relto \V^\kappa\) is the structure of the
\(\V\)-category \({(\V^\kappa)}^\op\) (see Remark~\ref{p:1}).  In other words,
\(\kappa\)-ary Moss liftings correspond precisely to the representable
\(\V\)-functors \((\ftF\V^\kappa, \eF h) \to \V^\op\) with representing objects
in the image of the map \(\ftF1_\kappa^\sharp \colon \ftF\kappa \to
\ftF\V^\kappa\).

\section{From Predicate Liftings to Lax Extensions}
\label{sec:pl->lax}

The main goal of this section is to provide a way to construct lax extensions to
\(\Rels{\V}\) from predicate liftings, and to understand which lax extensions
arise in such way.  The first task has already been discharged in earlier work
\citep{WS20,WS21}.
In the following, we describe the Kantorovich extension of a collection of
monotone predicate liftings \citep{WS20,WS21}, but completely from the point of
view of \(\V\)-relations.
In Theorem~\ref{p:34}, we show that every Kantorovich extension arises as an initial extension w.r.t to canonical extensions of generalized monotone neighborhood functors.

Given a functor \(\ftF \colon \SET \to \SET\) and a
predicate lifting \(\mu \colon \ftPVk \to \ftPV \ftF\), this perspective makes
it intuitive to construct a \(\V\)-relation \(\ftF X \relto \ftF Y\) from a
\(\V\)-relation \(r \colon X \relto Y\):

\begin{displaymath}
  \begin{tikzcd}[row sep=large, column sep=large]
    X & & & \ftF X & \\
    Y & \kappa & & \ftF Y & 1.
    \ar[from=1-1, to=2-2, "\relsymb" marking, "g \cdot r"]
    \ar[from=1-1, to=2-1, "\relsymb" marking, "r"']
    \ar[from=1-1, to=2-4, phantom, "\leadsto"]
    \ar[from=2-1, to=2-2, "\relsymb" marking, "g"']
    \ar[from=1-4, to=2-5, "\relsymb" marking, "\mu(g \cdot r)"]
    \ar[from=1-4, to=2-4, dotted, "\relsymb" marking, "\mu (g) \multimapdot \mu(g \cdot r)"']
    \ar[from=2-4, to=2-5, "\relsymb" marking, "\mu (g)"']
  \end{tikzcd}
\end{displaymath}

\begin{theorem}
  \label{p:17}
  Let \(\mu \colon \ftPVk \to \ftPV \ftF \) be a \(\kappa\)-ary
  predicate lifting. For every \(\V\)-relation \(r \colon X \relto Y\), consider
  the \(\V\)-relation \(\eF^\mu r \colon \ftF X \relto \ftF Y\) given
  by
  \begin{equation}
    \label{p:44}
    \eF^\mu r = \bigwedge_{g \colon Y \relto \kappa} \mu(g)
    \multimapdot \mu (g \cdot r).
  \end{equation}
  If \(\mu\) is monotone, then the assignment \(r \mapsto \eF^\mu r \) defines
  a lax extension \(\eF^\mu \colon \Rels{\V} \to \Rels{\V} \), which is
  \(\V\)-enriched whenever \(\mu\) is \(\V\)-enriched.
\end{theorem}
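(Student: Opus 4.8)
The plan is to verify in turn the lax-extension axioms \ref{p:61}, \ref{p:26} and \ref{p:0} assuming only that \(\mu\) is monotone, and then to establish \ref{p:37} under the stronger hypothesis that \(\mu\) is \(\V\)-enriched. Throughout, the only inputs are the universal property of lifts, the calculus of Proposition~\ref{p:2}, and the naturality of \(\mu\), which for a function \(f \colon X \to Y\) and a \(\V\)-relation \(g \colon Y \relto \kappa\) reads \(\mu(g \cdot f) = \mu(g) \cdot \ftF f\). Monotonicity \ref{p:61} is immediate: if \(r \le r'\) then \(g \cdot r \le g \cdot r'\), so \(\mu(g \cdot r) \le \mu(g \cdot r')\) by monotonicity of \(\mu\), and Proposition~\ref{p:2}(\ref{p:12}) gives \(\mu(g) \multimapdot \mu(g \cdot r) \le \mu(g) \multimapdot \mu(g \cdot r')\); taking infima over \(g\) yields \(\eF^\mu r \le \eF^\mu r'\).

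For lax functoriality \ref{p:26}, fix \(r \colon X \relto Y\), \(s \colon Y \relto Z\) and a witness \(g \colon Z \relto \kappa\). Instantiating the infimum defining \(\eF^\mu s\) at \(g\) and the one defining \(\eF^\mu r\) at \(g \cdot s\) gives
\[
  \eF^\mu s \cdot \eF^\mu r \le \bigl(\mu(g) \multimapdot \mu(g \cdot s)\bigr) \cdot \bigl(\mu(g \cdot s) \multimapdot \mu(g \cdot s \cdot r)\bigr) \le \mu(g) \multimapdot \mu\bigl(g \cdot (s \cdot r)\bigr),
\]
where the last step is the triangle inequality Proposition~\ref{p:2}(\ref{p:8}). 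As \(g\) was arbitrary, the right-hand infimum over \(g\) is \(\eF^\mu(s \cdot r)\), proving \ref{p:26}.

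The compatibility with functions \ref{p:0} is where naturality and the adjunction \(f \dashv f^\circ\) of Remark~\ref{p:40} do the real work, and the converse half is the step I expect to be the main obstacle. For \(\ftF f \le \eF^\mu f\), naturality gives \(\mu(g \cdot f) = \mu(g) \cdot \ftF f\), so the unit inequality \(t \le \mu(g) \multimapdot (\mu(g) \cdot t)\) of the lift adjunction, taken at \(t = \ftF f\), yields \(\ftF f \le \mu(g) \multimapdot \mu(g \cdot f)\) for every \(g\). For \((\ftF f)^\circ \le \eF^\mu(f^\circ)\) the trick is to exploit \(1_X \le f^\circ \cdot f\): for \(g \colon X \relto \kappa\) this gives \(g \le (g \cdot f^\circ) \cdot f\), hence \(\mu(g) \le \mu(g \cdot f^\circ) \cdot \ftF f\) by monotonicity and naturality; composing on the right with \((\ftF f)^\circ\) and using \(\ftF f \cdot (\ftF f)^\circ \le 1_{\ftF Y}\) (the adjunction \(\ftF f \dashv (\ftF f)^\circ\)) gives \(\mu(g) \cdot (\ftF f)^\circ \le \mu(g \cdot f^\circ)\), which by adjointness is exactly \((\ftF f)^\circ \le \mu(g) \multimapdot \mu(g \cdot f^\circ)\). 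Taking infima completes \ref{p:0}.

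Finally, assume \(\mu\) is \(\V\)-enriched; since a \(\V\)-functor is in particular monotone, \(\eF^\mu\) is already a lax extension, so by Theorem~\ref{d:thm:3} it suffices to verify condition \ref{d:item:6}, that is \(u \otimes \eF^\mu r \le \eF^\mu(u \otimes r)\) for all \(u \in \V\). Here I would first record two elementary facts: the scalar \(u\) slides through relational composition, \(g \cdot (u \otimes r) = u \otimes (g \cdot r)\), and through lifts from the left, \(u \otimes (q \multimapdot w) \le q \multimapdot (u \otimes w)\) — the latter because \(q \cdot \bigl(u \otimes (q \multimapdot w)\bigr) = u \otimes \bigl(q \cdot (q \multimapdot w)\bigr) \le u \otimes w\). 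Combining these with the \(\V\)-functor inequality \(u \otimes \mu(g \cdot r) \le \mu(u \otimes (g \cdot r))\) (Remark~\ref{d:rem:1}) and the monotonicity of \(u \otimes -\) past the infimum gives the chain
\[
  u \otimes \eF^\mu r \le \bigwedge_g \mu(g) \multimapdot \bigl(u \otimes \mu(g \cdot r)\bigr) \le \bigwedge_g \mu(g) \multimapdot \mu\bigl(g \cdot (u \otimes r)\bigr) = \eF^\mu(u \otimes r),
\]
which is the desired inequality and hence establishes \(\V\)-enrichment.
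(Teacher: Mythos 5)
Your proof is correct, and on the three lax-extension axioms it coincides with the paper's own argument: monotonicity \ref{p:61} by composing monotone operations and taking infima; \ref{p:26} by instantiating the infimum for \(\eF^\mu s\) at \(g\) and the one for \(\eF^\mu r\) at \(g \cdot s\) and then applying the triangle inequality of Proposition~\ref{p:2}(\ref{p:8}); and the first half of \ref{p:0} from naturality \(\mu(g \cdot f) = \mu(g) \cdot \ftF f\) together with the unit of the lift adjunction. For the second half of \ref{p:0} you also match the paper up to packaging: from \(1_X \le f^\circ \cdot f\) you derive \(\mu(g) \le \mu(g \cdot f^\circ) \cdot \ftF f\) exactly as the paper does, but where the paper then cites Proposition~\ref{p:10} (whose proof is precisely the adjunction \(\ftF f \dashv (\ftF f)^\circ\)), you inline that adjunction computation -- same content, different bookkeeping. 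The one genuinely different route is the \(\V\)-enrichment clause \ref{p:37}: the paper disposes of it in one line by noting that enrichment transfers exactly as monotonicity did, since \(\eF^\mu(-)\) is a composite of \(\V\)-functors (post-composition with \(g\), the components of \(\mu\), the lift \(\mu(g) \multimapdot -\), and an infimum), whereas you first observe that \(\V\)-enrichment of \(\mu\) implies monotonicity, so \(\eF^\mu\) is already a lax extension, and then verify the tensor condition \ref{d:item:6} of Theorem~\ref{d:thm:3} via two scalar-sliding facts, \(g \cdot (u \otimes r) = u \otimes (g \cdot r)\) and \(u \otimes (q \multimapdot w) \le q \multimapdot (u \otimes w)\), combined with the tensored-category characterisation of \(\V\)-functors from Remark~\ref{d:rem:1}. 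Your detour is longer but self-contained at the level of elementary quantale calculus and makes explicit how scalars interact with composition and lifts; the paper's version is shorter but leans on the unstated standard facts that composition, lifts and infima are \(\V\)-functorial on hom-objects. Both arguments are sound, and your use of Theorem~\ref{d:thm:3} is not circular, since its equivalences are stated for an arbitrary lax extension and you establish that \(\eF^\mu\) is one before invoking it.
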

\begin{proof}
  \begin{description}
    \item[\ref{p:61}/\ref{p:37}] Monotonicity is immediate from the fact that
    \begin{displaymath}
      \eF^{\mu}(-)\colon\Rels{\V}(X,Y) \longrightarrow\Rels{\V}(\ftF X,\ftF Y)
    \end{displaymath}
    is a composite of monotone maps; similarly for $\V$-enrichment.
  \item[\ref{p:26}] Let \(r \colon X \relto Y\) and
    \(s \colon Y \relto Z\) be \(\V\)-relations. We have to show that
    $\eF s\cdot\eF r\le\eF (s\cdot r)$.  Let
    \(h \colon Z \to \kappa\). By definition, we have
    \begin{align*}
      \eF^\mu s &\leq  \mu(h) \multimapdot \mu(h \cdot s), \\
      \eF^\mu r &\leq  \mu(h \cdot s) \multimapdot \mu(h \cdot s \cdot r).
    \end{align*}
    Therefore, by Proposition~\ref{p:2}(\ref{p:8})
    \[
      \eF^\mu s \cdot \eF^\mu r \leq \mu (h) \multimapdot
      \mu(h \cdot s \cdot r),
    \]
    which implies the claim.
  \item[\ref{p:0}] Let $f\colon X\to Y$. First, observe that since
    \(\mu\) is a natural transformation, we have
    $\mu(g)\cdot \ftF f=\mu(g\cdot f)$, and therefore
  \[
    \ftF f \leq \mu(g) \multimapdot \mu(g \cdot f),
  \]
  for all \(g \colon Y \relto \kappa\). Hence,
  $\ftF f \leq \eF^\mu f$.  Second, note that because \(\mu\) is
  monotone and natural, we have
  \[
    \mu(i) \leq \mu(i \cdot f^\circ \cdot f) = \mu(i \cdot f^\circ) \cdot \ftF
    f
  \]
  for all \(i \colon X \relto \kappa\).
  Therefore, by Proposition~\ref{p:10}, $\mu(i)\cdot(\ftF f)^\circ\le \mu(i \cdot f^\circ)$, and hence
  \[
    (\ftF f)^\circ \leq \mu(i) \multimapdot \mu(i \cdot f^\circ).
  \]
  Thus, $(\ftF f^\circ)\le \eF^\mu (f^\circ)$. \qedhere
  \end{description}
\end{proof}

The formula \ref{p:44} of Theorem~\ref{p:17} is entailed by the view of predicate
liftings as natural transformations of type
\({\Rels{\V}}_\circ(-,\kappa) \to {\Rels{\V}}_\circ(\ftF -,1)\).
By applying the involution on \(\Rels{\V}\), we could also think of predicate
liftings as natural transformations
\({\Rels{\V}}^\circ(\kappa,-) \to {\Rels{\V}}^\circ(1, \ftF -)\)
between functors defined according to the schema
\begin{displaymath}
  \begin{tikzcd}[row sep=tiny, column sep=large]
    \SET^\op & \SET^\op & \Rels{\V} & \SET
    \ar[from=1-1, to=1-4, bend left]{rrr}{\Rels{\V}^\circ(I, \ftG -)}
    \ar[from=1-1, to=1-2, "\ftG"']
    \ar[from=1-2, to=1-3, "(-)^\circ"']
    \ar[from=1-3, to=1-4, "{\Rels{\V}(I,-)}"'].
  \end{tikzcd}
\end{displaymath}
This point of view would lead us to the dual extension of \ref{p:44}.

\begin{proposition}
  Let \(\mu \colon \Rels{\V}_\circ(-,\kappa) \to \Rels{\V}_\circ(\ftF -, 1)\) be a predicate
  lifting and
  \(\bar{\mu} \colon \Rels{\V}^\circ(\kappa,-) \to \Rels{\V}^\circ(1,\ftF -)\)
  be the natural transformation defined by
  \[
    r \longmapsto {\mu(r^\circ)}^\circ.
  \]
  Then,
  \[
    (\eF^\mu r^\circ)^\circ
    = \bigwedge_{f \colon \kappa \relto X} \bar{\mu}(r \cdot f) \multimapdotinv \bar{\mu}(f).
  \]
\end{proposition}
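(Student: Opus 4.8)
The plan is to deduce this identity purely formally from the definition of the Kantorovich extension in Theorem~\ref{p:17} together with the converse-duality between lifts and extensions recorded in Proposition~\ref{p:5}(\ref{p:39}). First I would expand the left-hand side by applying the defining formula~\ref{p:44} to the relation \(r^{\circ} \colon Y \relto X\), whose codomain is now \(X\); this gives
\[
  \eF^{\mu} r^{\circ} = \bigwedge_{g \colon X \relto \kappa} \mu(g) \multimapdot \mu(g \cdot r^{\circ}).
\]
Since converse is an order-isomorphism on each hom-lattice of \(\Rels{\V}\) — it merely swaps the two arguments of a \(\V\)-relation, and the order is pointwise — it preserves arbitrary infima, so \((\eF^{\mu} r^{\circ})^{\circ}\) equals the infimum over \(g\) of the converses \((\mu(g) \multimapdot \mu(g \cdot r^{\circ}))^{\circ}\).

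Next I would treat a single term. By Proposition~\ref{p:5}(\ref{p:39}) the converse of a lift is an extension, so \((\mu(g) \multimapdot \mu(g \cdot r^{\circ}))^{\circ} = \mu(g \cdot r^{\circ})^{\circ} \multimapdotinv \mu(g)^{\circ}\). To rewrite the two converses in terms of \(\bar{\mu}\), I would use the defining relation \(\bar{\mu}(p) = \mu(p^{\circ})^{\circ}\), which upon substituting \(p = q^{\circ}\) yields \(\mu(q)^{\circ} = \bar{\mu}(q^{\circ})\). Hence \(\mu(g)^{\circ} = \bar{\mu}(g^{\circ})\), and, using \((g \cdot r^{\circ})^{\circ} = r \cdot g^{\circ}\), also \(\mu(g \cdot r^{\circ})^{\circ} = \bar{\mu}(r \cdot g^{\circ})\). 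Each term of the infimum thus becomes \(\bar{\mu}(r \cdot g^{\circ}) \multimapdotinv \bar{\mu}(g^{\circ})\).

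Finally I would reindex. Since \(g \mapsto g^{\circ}\) is a bijection between the \(\V\)-relations \(X \relto \kappa\) and the \(\V\)-relations \(\kappa \relto X\), setting \(f = g^{\circ}\) turns the infimum into \(\bigwedge_{f \colon \kappa \relto X} \bar{\mu}(r \cdot f) \multimapdotinv \bar{\mu}(f)\), which is precisely the right-hand side. I expect no genuinely hard step here; the only real care lies in the bookkeeping — keeping track of the domains, codomains and orientations of all the converses, matching the two operands of \(\multimapdot\) and \(\multimapdotinv\) correctly when invoking Proposition~\ref{p:5}(\ref{p:39}), and confirming that the substitution \(f = g^{\circ}\) is a bijection of index sets so that the reindexed infimum genuinely coincides with the original.
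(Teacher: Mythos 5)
Your proposal is correct and follows essentially the same route as the paper's proof: expand \(\eF^{\mu}r^{\circ}\) by the Kantorovich formula, use that \((-)^{\circ}\) preserves infima, convert each term via \((s \multimapdot r)^{\circ} = r^{\circ} \multimapdotinv s^{\circ}\), rewrite the converses using \(\mu(q)^{\circ} = \bar{\mu}(q^{\circ})\), and reindex along the bijection \(g \mapsto g^{\circ}\). Your citation of Proposition~\ref{p:5}(\ref{p:39}) is in fact the right one (the paper's own proof points to Proposition~\ref{p:2}, but the identity used is item~(\ref{p:39}) of Proposition~\ref{p:5}).
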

\begin{proof}
  Taking into account Proposition~\ref{p:2}(\ref{p:39}) and the fact
  that \((-)^\circ\) preserves infima, we have
  \begin{align*}
    (\eF^\mu r^\circ)^\circ
    & = \bigwedge_{g \colon X \relto \kappa} (\mu(g) \multimapdot \mu(g \cdot r^\circ))^\circ \\
    & = \bigwedge_{g \colon X \relto \kappa} \mu(g \cdot r^\circ)^\circ \multimapdotinv \mu(g)^\circ \\
    & = \bigwedge_{g \colon X \relto \kappa} \bar{\mu}(r \cdot g^\circ )
    \multimapdotinv \bar{\mu}(g^\circ) \\
    & = \bigwedge_{f \colon \kappa \relto X} \bar{\mu}(r \cdot f) \multimapdotinv \bar{\mu}(f).
    \qedhere
  \end{align*}
\end{proof}

\begin{definition}
  Let \(\ftF \colon \SET \to \SET\) be a functor, and \(M\) a \emph{class} of
  monotone predicate liftings. The \df{Kantorovich} lax extension of \(\ftF\)
  with respect to \(M\) is the lax extension
  \[
    \eF^M = \bigwedge_{\mu \in M} \eF^\mu.
  \]
\end{definition}

\begin{examples}
  \label{p:47}
  Let \(\V\) be a quantale.
  \begin{enumerate}
  \item \label{p:90} The identity functor on \(\Rels{\V}\) is the Kantorovich
    extension of the identity functor on \(\SET\) with respect to the identity
    natural transformation \(\ftPV \to \ftPV\).
  \item \label{p:91} The largest extension of a functor
    \(\ftF \colon \SET \to \SET\) to \(\Rels{\V}\) arises as the Kantorovich
    extension of \(\ftF\) with respect to the natural transformation
    \(\top \colon \ftPV \to \ftPV \ftF\) that sends every map to the
    constant map \(\top\); and also as the Kantorovich extension with respect to
    the natural transformation \(\bot \colon \ftPV \to \ftPV \ftF\) that
    sends every map to the constant map \(\bot\).
  \item \label{p:48} For a subquantale \(\W\) of \(\V\) (that is,
    \(\W\) is a submonoid of \(\V\) closed under suprema), it is easy
    to construct a unary predicate lifting of the covariant
    \(\W\)-powerset functor \(\ftP \colon \SET \to \SET\), which, in
    terms of \(\V\)-relations, is defined by
    \begin{align*}
      \ftP X & =\Rels{\W}(X, 1) \subseteq \Rels{\V}(X,1), \\
      \ftP f & =  (-) \cdot f^\circ.
    \end{align*}
    A straightforward calculation shows that ``evaluating'' induces a predicate
    lifting \(\Diamond \colon \ftPV \to \ftPV \ftP\) whose \(X\)-component
    is defined by
    \begin{align*}
      \Diamond_X \colon \Rels{\V}(X,1) & \longrightarrow \Rels{\V}(\ftP X, 1). \\
      \phi & \longmapsto \phi \cdot \ev_{\W,X}
    \end{align*}

    Then, by Proposition~\ref{p:2}(\ref{p:14}) and Corollary~\ref{p:28},
    \[
      \widehat{\ftP}^\Diamond (r) = \ev_{\W,Y} \multimapdot r \cdot \ev_{\W,X},
    \]
    for every \(\V\)-relation \(r \colon X \relto Y\). Therefore, for
    \(\W\)-relations \(\phi \colon X \relto 1\) and \(\psi \colon Y \relto 1\),
    \[
      \widehat{\ftP}^\Diamond r (\phi, \psi)
      = \bigwedge_{y \in Y} \hom(\psi(y), \bigvee_{x \in X} r(x,y) \otimes
      \phi(x)).
    \]
    Furthermore, for \(\W = \two\) this formula simplifies to
    \[
      \widehat{\ftP}^\Diamond r (A,B) = \bigwedge_{b \in B} \bigvee_{a \in A}
      r(a,b).
    \]

    \begin{enumerate}
    \item \label{p:45} For \(\W = \V = \two\) we obtain a generalization of the
      upper half of the Egli-Milner order: for every \(r \colon X \relto Y\),
      and all \(A \in \ftP X\) and \(B \in \ftP Y\),
      \[
        A (\widehat{\ftP}^\Diamond r) B \iff \forall b \in B, \exists a \in A,
        a\, r\, b.
      \]
    \item \label{p:46} For \(\W = 2\) and \(\V\) a left continuous t-norm, we
      obtain a generalization of the upper half of the Hausdorff metric: for
      every \(r \colon X \relto Y\), and all \(A \in \ftP X\) and
      \(B \in \ftP Y\),
      \[
        \widehat{\ftP}^\Diamond r (A,B) = \bigwedge_{b \in B} \bigvee_{a \in A}
        r(a,b).
      \]
    \end{enumerate}

  \item The dual lax extensions of the extensions~\ref{p:47}(\ref{p:45}) and
    \ref{p:47}(\ref{p:46}) are generalizations of the lower half of the
    Egli-Milner order, and of the lower half of the Hausdorff metric,
    respectively. Therefore, the symmetrization of these lax extensions are
    generalizations of the Egli-Milner order and the Hausdorff metric.

  \item \label{d:exs:1} Let us now consider a faithful functor \(\ftII{-}
    \colon\catA\to\POST\) with some \(\catA\)-object \(\V\) over the partially
    ordered set \(\V\) and the functor
    \(\DD{\kappa}=\catA(\ftPVk,\V)\colon\SET\to\SET\). Some typical
    examples are \(\catA=\POST\) and \(\catA=\Cats{\V}\); for instance, for
    \(\catA=\POST\) and \(\V=\two\), we obtain the generalized monotone neighbourhood
    functor \(\DD{\kappa}\colon\SET\to\SET\). There is a canonical predicate lifting
    corresponding to the identity transformation \(1 \colon
    \DD{\kappa}\to\DD{\kappa}\), and we denote the induced extension as
    \(\eDD{\kappa}\). Then, for a \(\V\)-relation \(r \colon X\relto Y\) and for
    \(\Phi \colon \ftPVk X\to\V\) and \(\Psi \colon
    \ftPVk Y\to\V\),
    \begin{align*}
      (\eDD{\kappa}r)(\Phi,\Psi)
      & = \bigwedge_{g \colon Y\relto \kappa}\Psi(g)\blackright \Phi(g\cdot r) \\
      & = \bigwedge_{g \colon Y\relto \kappa}\hom(\Psi(g),\Phi(g\cdot r)).
    \end{align*}
    This extension coincides with the one considered by \citet{SS08} for the
    classical monotone neighbourhood functor.
    In particular, it follows that, for the identity \(1_{X}\colon X\relto X\),
    the \(\V\)-category \((\DD{\kappa} X,\eDD{\kappa} 1_{X})\) is separated.

  \end{enumerate}
\end{examples}

\begin{remark}
  To see that the Kantorovich extension defined by \citet{WS20}
  coincides with the one presented here, note that Theorem~\ref{p:17}
  requires \(\mu\) to be monotone, hence we can define \(\ftF^\mu\)
  with respect to \(\V\)-relations of type \(X \relto \kappa\) with
  the same result, that is,
  \[
    \eF^\mu r
    =\bigwedge_{f \colon X \relto \kappa} \mu(f \multimapdotinv r) \multimapdot \mu (f).
  \]
  Moreover, in the language of \citet{WS20}, a pair \((f,g)\) of
  \(\kappa\)-indexed families of maps of type \(X \to \V\) is
  \emph{$r$-non-expansive} precisely when \(g \leq f \multimapdotinv r\), when
  interpreting \(f\) and \(g\) as \(\V\)-relations.
\end{remark}

The following result explains the distinguished role of the canonical extensions of generalized monotone neighbourhood functors in the process of constructing lax extensions.

\begin{theorem}
  \label{p:34}
  Let \(M\) be a collection of predicate liftings for a functor \(\ftF \colon \SET \to \SET\).
  The Kantorovich extension \(\eF^{M}\) is the initial extension of \(\ftF\) with respect to the cone
  \begin{displaymath}
    (\overline{\mu}\colon\ftF \longrightarrow\DD{\kappa})_{\mu\in M}
  \end{displaymath}
  (with~$\overline\mu$ as per~\eqref{eq:mu-bar}) and the lax
  extensions \(\eDD{\kappa}\) of \(\DD{\kappa}\) described in
  Examples~\ref{p:47}(\ref{d:exs:1}).  Also, note that if all
  predicate liftings in \(M\) are even \(\V\)-enriched, then so is
  \(\eF^{M}\).
\end{theorem}
\begin{proof}
	Let \(\mu \colon \ftPVk \to \ftPV \ftF \) be a \(\kappa\)-ary predicate lifting in \(M\).
	Then, for a \(\V\)-relation \(r \colon X\relto Y\) and \(\fx\in\ftF X\) and \(\fy\in\ftF Y\),
	\begin{align*}
      		(\eF^{\mu})(\fx,\fy)
      		& = \bigwedge_{g \colon Y\relto \kappa}\mu(g)(\fy) \blackright \mu (g \cdot r)(\fx)\\
      		& = \bigwedge_{g \colon Y\relto \kappa}\overline{\mu}(\fy)(g) \blackright \overline{\mu}(\fx)(g \cdot r)\\
      		& = (\eDD{\kappa}r)(\overline{\mu}(\fx),\overline{\mu}(\fy)).
    	\end{align*}
    That is, \(\eF^{\mu}\) is the \emph{initial extension} with respect to
    \(\overline{\mu}\colon\ftF\to\DD{\kappa}\) and the lax extension \(\eDD{\kappa}\) of \(\DD{\kappa}\).
\end{proof}

We proceed to collect some properties of Kantorovich extensions.
We begin by observing that Kantorovich extensions are compatible with initial extensions along a natural transformation.
This property will be particularly useful in Section~\ref{sec:small-lax} to generalize Theorem~\ref{p:60}.

\begin{proposition}\label{d:rem:2}
  Let $\eF_\ftF \colon \Rels{\V} \to \Rels{\V}$ be a lax extension of a functor \(\ftF \colon\SET\to\SET\), and let \(i \colon \ftG\to \ftF\) be a natural transformation.
  Consider the initial lax extension \(\eG_i \colon\Rels{\V}\to\Rels{\V}\) of~$\ftG$ with respect to \(i \colon \ftG\to \ftF\).
  If\/ \(\eF_\ftF\) is Kantorovich w.r.t a class \(M\) of monotone predicate liftings, then \(\eG_i\) is Kantorovich w.r.t to the class of monotone predicate liftings
  \[
    M_i = \{ (\ftPV i) \cdot \mu \mid \mu \in M \}.
  \]
\end{proposition}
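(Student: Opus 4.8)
The plan is to reduce both sides of the claimed equality to the single relation $i_Y^\circ \cdot \eF r \cdot i_X$. By definition of the initial extension applied to the one-element cone $(i \colon \ftG \to \ftF)$, with $\ftF$ carrying $\eF$, the left-hand side is simply
\[
  \eG_i r = i_Y^\circ \cdot \eF r \cdot i_X,
\]
for every \(\V\)-relation \(r \colon X \relto Y\). Hence the entire content of the statement is to show that \(\eG^{M_i} r\) equals this same relation.

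First I would record how a predicate lifting of the form \(\nu = \ftP_\V i \cdot \mu\) acts on \(\V\)-relations. Under the identification of predicate liftings with natural transformations \(\Rels{\V}_\circ(-,\kappa) \to \Rels{\V}_\circ(\ftF -, 1)\), postcomposing \(\mu\) with the contravariant action \(\ftP_\V i_X\) amounts to precomposition with the function \(i_X\) viewed as a \(\V\)-relation; that is, \(\nu(f) = \mu(f) \cdot i_X\), for every \(f \colon X \relto \kappa\), where \(\kappa\) is the arity of \(\mu\). Next I would compute a single summand of the Kantorovich formula for \(\eG^\nu\). Using \(\nu(g) = \mu(g) \cdot i_Y\) and \(\nu(g \cdot r) = \mu(g \cdot r) \cdot i_X\), the divisor--dividend pair becomes \((\mu(g) \cdot i_Y) \blackright (\mu(g \cdot r) \cdot i_X)\). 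The key manipulation is to slide the two functions out of the lift: Proposition~\ref{p:5}(\ref{p:42}) rewrites the precomposition of the divisor \(\mu(g)\) by \(i_Y\) as the converse \(i_Y^\circ\) acting on the left, and Proposition~\ref{p:5}(\ref{p:67}) moves the postcomposition of the dividend \(\mu(g \cdot r)\) by \(i_X\) outside the lift, yielding
\[
  (\mu(g) \cdot i_Y) \blackright (\mu(g \cdot r) \cdot i_X)
  = i_Y^\circ \cdot (\mu(g) \blackright \mu(g \cdot r)) \cdot i_X.
\]

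Taking the infimum over all \(g \colon Y \relto \kappa\) then gives \(\eG^\nu r = i_Y^\circ \cdot \eF^\mu r \cdot i_X\), and taking the infimum over \(\mu \in M\) produces \(\eG^{M_i} r = i_Y^\circ \cdot \eF^M r \cdot i_X = i_Y^\circ \cdot \eF r \cdot i_X = \eG_i r\), as required, using the hypothesis \(\eF = \eF^M\). The main obstacle is precisely the interchange of the operation \(i_Y^\circ \cdot (-) \cdot i_X\) with these two infima, since composition in the quantaloid \(\Rels{\V}\) preserves suprema but not, in general, infima. The resolution is Remark~\ref{p:40}: because \(i\) is a (componentwise) function, the maps \(i_Y^\circ \cdot (-)\) and \((-) \cdot i_X\) are right adjoints in \(\Rels{\V}\), and right adjoints preserve all infima, so both interchanges are legitimate. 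Everything else is routine bookkeeping with the composition conventions of \(\Rels{\V}\) and the fact that the infimum over the class \(M_i\) coincides with the infimum of \(\eG^{\ftP_\V i \cdot \mu}\) over \(\mu \in M\).
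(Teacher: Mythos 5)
Your proof is correct and takes essentially the same route as the paper's: both identify \(\eG_i r = i_Y^\circ \cdot \eF^M r \cdot i_X\) with \(\eG^{M_i} r\) by sliding \(i_Y\) and \(i_X\) through each Kantorovich summand via Proposition~\ref{p:5}(\ref{p:42}) and (\ref{p:67}), using \((\ftP_\V i \cdot \mu)(g) = \mu(g)\cdot i_Y\). The only difference is that you explicitly justify interchanging \(i_Y^\circ \cdot (-) \cdot i_X\) with the two infima via the right adjoints of Remark~\ref{p:40}, a step the paper's proof performs silently.
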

\begin{proof}
  Clearly, every predicate lifting in \(M_i\) is monotone.
  Now, let \(r \colon X \relto Y\) be a \(\V\)-relation.
  Then,
  \begin{align*}
    \eG_i r
    & = i^\circ_Y \cdot \eF^M r \cdot i_X \\
    & = \bigwedge_{\mu \in M} \Big( \bigwedge_{g \colon Y \relto \arity(\mu)} i^\circ_Y \cdot (\mu (g) \multimapdot \mu (g \cdot r)) \cdot i_X \Big).
  \end{align*}
  Therefore, by Corollary~\ref{p:5},
  \begin{align*}
    \eG_i r
    & = \bigwedge_{\mu \in M} \Big( \bigwedge_{g \colon Y \relto \arity(\mu)} (\mu (g) \cdot i_Y) \multimapdot (\mu (g \cdot r) \cdot i_X) \Big) \\
    & = \bigwedge_{\mu \in M} \Big(\bigwedge_{g \colon Y \relto \arity(\mu)} ((\ftPV i) \cdot \mu (g)) \multimapdot ((\ftPV i) \cdot \mu (g \cdot r)) \Big) \\
    & = \bigwedge_{\mu \in M_i} \eF^\mu r.
      \qedhere
  \end{align*}
\end{proof}

\begin{example}
  We recall that an endofunctor on \(\SET\) is called taut if it preserves inverse images \citep{Man02}.
  Every taut functor \(\ftF \colon\SET\to\SET\) admits a natural transformation
  \begin{displaymath}
    \supp \colon \ftF \longrightarrow\ftU
  \end{displaymath}
  into the monotone neighbourhood functor with \(X\)-component
  \begin{displaymath}
    \supp_{X}\colon \ftF X \longrightarrow\ftU X,\quad
    \fx \longmapsto \{A\subseteq X\mid \fx\in \ftF A\}.
  \end{displaymath}
  We note that every \(\supp_{X}(\fx)\) is actually a filter \citep{Man02,
  Gum05}. As observed by \citet{SS08}, the op-canonical extension
  \citep{Sea05}  of
  a taut functor  is the initial lift with respect to \(\supp\) of the extension
  \(\eU\) of the monotone neighbourhood functor \(\ftU \colon\SET\to\SET\) of
  Example~\ref{p:47}(\ref{d:exs:1}), that is, the extension induced by the
  predicate lifting \(\mu\) corresponding to the identity transformation \(1
  \colon\ftU\to\ftU\). Hence, by Proposition~\ref{d:rem:2}, the op-canonical
  extension of a taut functor \(\ftF\) is induced by the predicate lifting
  \begin{displaymath}
    \ftPV\supp \cdot \mu,
  \end{displaymath}
  which, by adjunction, corresponds to the natural transformation
  \begin{displaymath}
    \supp \colon\ftF \longrightarrow\ftU.
  \end{displaymath}
\end{example}

The next result entails that we can use the Kantorovich extension to major a lax extension by extracting all predicate liftings induced by the lax extension.
This is the first step towards representing lax extensions by collections of predicate liftings.

\begin{proposition}
  \label{p:80}
  Let \(\eF \colon \Rels{\V} \to \Rels{\V}\) be a lax extension, and \(\mu \colon \ftPVk \to \ftPV \ftF\) a predicate lifting induced by \(\eF\).
  Then, \(\eF \leq \eF^\mu\).
\end{proposition}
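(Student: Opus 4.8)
The plan is to exploit the infimum structure of the Kantorovich extension together with the universal property of lifts, so that the whole inequality collapses onto a single application of the lax-functoriality axiom \ref{p:26}. Fix a \(\V\)-relation \(r \colon X \relto Y\). Since
\[
  \eF^\mu r = \bigwedge_{g \colon Y \relto \kappa} \mu(g) \multimapdot \mu(g \cdot r)
\]
is an infimum, it suffices to show \(\eF r \leq \mu(g) \multimapdot \mu(g \cdot r)\) for each fixed \(g \colon Y \relto \kappa\); the claim \(\eF r \leq \eF^\mu r\) then follows because \(\eF r\) lies below every term of the meet.

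Next I would invoke the universal property defining the lift \(\multimapdot\): for \(t \colon \ftF X \relto \ftF Y\) one has \(t \leq \mu(g) \multimapdot \mu(g \cdot r)\) if and only if \(\mu(g) \cdot t \leq \mu(g \cdot r)\). Taking \(t = \eF r\), the goal is reduced to establishing \(\mu(g) \cdot \eF r \leq \mu(g \cdot r)\). To handle this, I would unfold that \(\mu\) is induced by \(\eF\): by definition there is a \(\V\)-relation \(\fr \colon \ftF\kappa \relto 1\) with \(\mu(h) = \fr \cdot \eF h\) for every \(h \colon Z \relto \kappa\). Applying this to \(h = g\) and to \(h = g \cdot r\), the inequality to prove becomes
\[
  \fr \cdot \eF g \cdot \eF r \leq \fr \cdot \eF(g \cdot r).
\]

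This last step is now immediate: the lax-extension axiom \ref{p:26} gives \(\eF g \cdot \eF r \leq \eF(g \cdot r)\), and precomposing with \(\fr\) on the left preserves this inequality because \(\V\)-relational composition is monotone in each variable. I do not expect any real obstacle here; the only points demanding care are bookkeeping of the types (noting \(\mu(g) \colon \ftF Y \relto 1\), \(\mu(g \cdot r) \colon \ftF X \relto 1\), so that \(\mu(g) \multimapdot \mu(g \cdot r)\) indeed has the type of \(\eF r\)) and the direction of the meet, namely that we need \(\eF r\) below \emph{every} term, which matches precisely the one-sided inequality \ref{p:26}. Notably, neither monotonicity of \(\mu\) nor the explicit formula for the lift is required.
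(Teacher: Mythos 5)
Your proof is correct and is essentially the paper's argument transposed across the adjunction defining \(\multimapdot\): the paper writes \(\mu(g)=\mu(1_\kappa)\cdot\eF g\) (via Theorem~\ref{p:75}, i.e.\ the specific representing relation \(\fr=\mu(1_\kappa)\)) and concludes \(\eF r\le(\mu(1_\kappa)\cdot\eF g)\multimapdot(\mu(1_\kappa)\cdot\eF g\cdot\eF r)\le\mu(g)\multimapdot\mu(g\cdot r)\) using \ref{p:26}, while you transpose to \(\mu(g)\cdot\eF r\le\mu(g\cdot r)\) and apply \ref{p:26} there --- the same two ingredients in the same roles. Your closing observations (types, direction of the meet, no need for monotonicity of \(\mu\)) are accurate.
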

\begin{proof}
  Let \(r \colon X \relto Y\) be a \(\V\)-relation.
  Then, by \ref{p:26},
  \begin{align*}
    \eF^\mu (r) & = \bigwedge_{g \colon Y \relto \kappa} (\mu(1_\kappa) \cdot \eF(g)) \multimapdot (\mu(1_\kappa) \cdot \eF(g \cdot r)) \\
    & \geq \bigwedge_{g \colon Y \relto \kappa} (\mu(1_\kappa) \cdot \eF(g)) \multimapdot (\mu(1_\kappa) \cdot \eF(g) \cdot \eF (r)) \\
    & \geq \eF (r).
    \qedhere
  \end{align*}
\end{proof}

\begin{corollary}
  \label{p:27}
  Let \(\eF \colon \Rels{\V} \to \Rels{\V}\) be a lax extension, and \(\mu^\fk \colon \ftPVk \to \ftPV \ftF\) a Moss lifting of \(\eF\).
  Then, \(\eF \leq \eF^{\mu^\fk}\).
\end{corollary}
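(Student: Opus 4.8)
The plan is to observe that Corollary~\ref{p:27} is an immediate specialisation of Proposition~\ref{p:80}, so that essentially nothing remains to be proved once the relevant definitions are unfolded.

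First I would recall the definition of a Moss lifting: a predicate lifting \(\mu^\fk \colon \ftP_{\V^\kappa} \to \ftP_\V \ftF\) of \(\eF\) is precisely a predicate lifting \emph{induced by} \(\eF\) in which the witnessing \(\V\)-relation \(\fr \colon \ftF\kappa \relto 1\) is taken to be the converse \(\fk^\circ\) of an element \(\fk \colon 1 \to \ftF\kappa\). Thus the assignment \(f \mapsto \fk^\circ \cdot \eF f\) defining \(\mu^\fk\) exhibits it as an instance of the assignment \(f \mapsto \fr \cdot \eF f\) of the definition of a predicate lifting induced by \(\eF\). In particular, every Moss lifting of \(\eF\) is, a fortiori, a predicate lifting induced by \(\eF\) in exactly the sense required by the hypothesis of Proposition~\ref{p:80}.

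With this identification in hand, the conclusion \(\eF \leq \eF^{\mu^\fk}\) follows by applying Proposition~\ref{p:80} to \(\mu = \mu^\fk\). There is no genuine obstacle here: the only point to verify is the bookkeeping that a Moss lifting satisfies the hypothesis of Proposition~\ref{p:80}, which is immediate from the definitions, and the corollary is then proved by direct appeal to the preceding proposition.
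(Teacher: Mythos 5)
Your proposal is correct and follows exactly the paper's (implicit) route: the corollary is stated without proof precisely because a Moss lifting is, by definition, a predicate lifting induced by \(\eF\) with witnessing \(\V\)-relation \(\fr = \fk^\circ\), so Proposition~\ref{p:80} applies verbatim. There is nothing to add.
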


\noindent Notably, as a consequence of the previous results we can use the Kantorovich extension to detect predicate liftings induced by lax extensions.

\begin{proposition}
  \label{p:110}
  Let \(\eF \colon \Rels{\V} \to \Rels{\V}\) be a lax extension,
  and \(\mu \colon \ftPVk \to \ftPV \ftF\) a predicate lifting induced by
  \(\eF\). Then, the predicate lifting \(\mu\) is induced by \(\eF^\mu\).
\end{proposition}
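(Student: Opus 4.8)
The plan is to reduce the statement to the characterisation of induced liftings given in Theorem~\ref{p:75}. Since \(\mu\) is induced by \(\eF\), it is monotone by Proposition~\ref{p:64}, so \(\eF^\mu\) is a genuine lax extension by Theorem~\ref{p:17}. Consequently, by Theorem~\ref{p:75} it suffices to prove that
\[
  \mu(f) = \mu(1_\kappa) \cdot \eF^\mu(f)
\]
for every \(\V\)-relation \(f \colon X \relto \kappa\). I would establish this equality by two opposite inequalities.

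For the inequality \(\mu(f) \leq \mu(1_\kappa) \cdot \eF^\mu(f)\), I would again use that \(\mu\) is induced by \(\eF\), which by Theorem~\ref{p:75} means \(\mu(f) = \mu(1_\kappa) \cdot \eF(f)\). Since \(\eF \leq \eF^\mu\) by Proposition~\ref{p:80} and \(\V\)-relational composition is monotone in each variable, we obtain \(\mu(f) = \mu(1_\kappa) \cdot \eF(f) \leq \mu(1_\kappa) \cdot \eF^\mu(f)\) at once.

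For the reverse inequality I would exploit the shape of the Kantorovich formula. Instantiating the infimum defining \(\eF^\mu f = \bigwedge_{g \colon \kappa \relto \kappa} \mu(g) \multimapdot \mu(g \cdot f)\) at the single value \(g = 1_\kappa\) yields \(\eF^\mu f \leq \mu(1_\kappa) \multimapdot \mu(f)\). Composing on the left with \(\mu(1_\kappa)\) and invoking the counit of the lift adjunction -- that is, the universal inequality \(r \cdot (r \multimapdot s) \leq s\) applied to \(r = \mu(1_\kappa)\) and \(s = \mu(f)\) -- gives
\[
  \mu(1_\kappa) \cdot \eF^\mu f \leq \mu(1_\kappa) \cdot (\mu(1_\kappa) \multimapdot \mu(f)) \leq \mu(f).
\]

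Combining the two inequalities yields the desired equality, and Theorem~\ref{p:75} then concludes that \(\mu\) is induced by \(\eF^\mu\). This argument presents no real obstacle: the \(\geq\) direction is handed to us for free by \(\eF \leq \eF^\mu\), and the substantive \(\leq\) direction is delivered by the single instantiation \(g = 1_\kappa\) together with the lift counit. The only point requiring care is tracking the types and directions, checking that \(\mu(1_\kappa) \colon \ftF\kappa \relto 1\) and \(\mu(f) \colon \ftF X \relto 1\) share the codomain \(1\) so that the lift \(\mu(1_\kappa) \multimapdot \mu(f) \colon \ftF X \relto \ftF\kappa\) composes correctly with \(\mu(1_\kappa)\) to land back in \(\ftF X \relto 1\).
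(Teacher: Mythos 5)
Your proof is correct and follows essentially the same route as the paper's: both arguments obtain one inequality from Proposition~\ref{p:80} (\(\eF\le\eF^\mu\)) and the other by instantiating the Kantorovich infimum at \(g=1_\kappa\) and applying the counit \(r\cdot(r\multimapdot s)\le s\). The only cosmetic difference is that the paper uses Lemma~\ref{p:52} to check the identity solely at \(\ev_\kappa\), whereas you verify \(\mu(f)=\mu(1_\kappa)\cdot\eF^\mu(f)\) uniformly for all \(f\colon X\relto\kappa\) via Theorem~\ref{p:75} --- the same computation carried out at every \(f\) instead of just one.
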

\begin{proof}
  According to Lemma~\ref{p:52}, it suffices to show
  \(\mu(\ev_\kappa) = \mu(1_\kappa) \cdot \eF^\mu\ev_\kappa\).  First, observe
  that by Proposition~\ref{p:80} and Lemma~\ref{p:52}(\ref{p:81}) we
  have
  \[
    \mu(\ev_\kappa)
    = \mu(1_\kappa) \cdot \eF(\ev_\kappa)
    \leq \mu(1_\kappa) \cdot \eF^\mu(\ev_\kappa).
  \]
  Second, note that by definition of \(\eF^\mu\) we obtain
  \[
    \mu(1_\kappa) \cdot \eF^\mu(\ev_\kappa)
    \leq \mu(1_\kappa) \cdot (\mu(1_\kappa) \multimapdot \mu(\ev_\kappa))
    \leq \mu(\ev_\kappa).\qedhere
  \]
\end{proof}

\begin{example}
  The predicate lifting \(\Diamond \colon \ftPV \to \ftPV\ftP\) of
  Example~\ref{p:47}(\ref{p:48}) is induced by the lax extension
  \(\widehat{\ftP}^\Diamond \colon \Rels{\V} \to \Rels{\V}\). With
  \(k \colon 1 \to \W\) denoting the function that selects the element
  \(k \in \W\), we have
  \[
    \Diamond(f) = k^\circ \cdot \widehat{\ftP}^\Diamond f
  \]
  for every \(\V\)-relation \(f \colon X \relto 1\).
\end{example}

We already know from Proposition~\ref{p:64} that every predicate
lifting induced by a lax extension is monotone.  The next example
shows that the converse statement does not hold.

\begin{example}
  \label{p:32}
  Let \([0,1]\) denote the quantale consisting of the unit interval equipped
  with the usual order and multiplication. Consider the unary monotone
  predicate lifting for the identity functor
  \(\mu \colon \ftP_{[0,1]} \to \ftP_{[0,1]}\) determined by the map
  \(\mu(\ev_1) \colon [0,1] \to [0,1]\) defined by
  \[
    \mu(\ev_1)(v) =
    \begin{cases}
      0 \text{ if } v \leq \frac{1}{2}; \\
      1 \text{ otherwise}.
    \end{cases}
  \]
  Given that \([0,1]\) is an integral quantale and \(\mu\) is a unary
  predicate lifting, in order to be induced by \(\eF^\mu\), \(\mu\)
  would need to satisfy the condition
  \[
    \mu(\ev_1) \leq \eF^\mu(\ev_1);
  \]
  that is, for every \(g \colon 1 \relto 1\),
  \[
    \mu(\ev_1) \leq \mu(g) \multimapdot \mu(g \cdot \ev_1).
  \]
  However, with \(g = \frac{2}{3}\) and \(v = \frac{3}{4}\),
  \[
    \mu(\ev_1)\left(\frac{3}{4}\right) = 1
    \not\leq \mu(\ev_1)\left(\frac{2}{3}\right) \multimapdot
    \mu(\ev_1)\left(\frac{1}{2}\right) = 0.
  \]
\end{example}

Finally, we tackle the problem of recovering a lax extension to \(\Rels{\V}\) as the Kantorovich extension w.r.t. some \emph{class} of predicate liftings.

\begin{definition}
	A lax extension \(\eF \colon \Rels{\V} \to \Rels{\V}\) of a functor \(\ftF \colon \SET \to \SET\) is  \df{induced} by a class of monotone predicate liftings \(\Lambda\) for \(\ftF\) if \(\eF\) is the Kantorovich extension w.r.t. \(\Lambda\).
\end{definition}

\begin{lemma}
  \label{p:19}
  Let \(\eF \colon \Rels{\V} \to \Rels{\V}\) be a lax extension of a functor \(\ftF \colon\SET\to\SET\), \(\kappa\) a cardinal, and \(i \colon Y \to \kappa\) a function.
  For every \(\fy \in \ftF Y\) and \(\fk = \ftF i(\fy)\), and all \(\V\)-relations \(r \colon X \relto Y\) and \(s \colon Z \relto \kappa\),
  \begin{enumerate}
    \item \label{p:21} \(\mu^\fk(s) = \fy^\circ \cdot \eF(i^\circ \cdot s)\);
    \item if  \(i\) is a monomorphism, then
      \begin{enumerate}
        \item \label{p:25} \(\mu^{\fk}(i \cdot r) = \fy^\circ \cdot \eF r\);
        \item
          \(
            \mu^\fk (i) \multimapdot \mu^\fk (i \cdot r)
            \leq \fy^\circ \multimapdot (\fy^\circ \cdot \eF r).
          \)
      \end{enumerate}
  \end{enumerate}
\end{lemma}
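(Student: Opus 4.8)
The plan is to derive all three statements from the defining formula of a Moss lifting, $\mu^\fk(s)=\fk^\circ\cdot\eF s$, combined with the way $\eF$ interacts with converses of functions. Throughout I view $\fy$ and $\fk$ as the functions $\fy\colon 1\to\ftF Y$ and $\fk\colon 1\to\ftF\kappa$ selecting the respective elements.

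For \ref{p:21}, I would first rewrite the hypothesis $\fk=\ftF i(\fy)$ as the identity $\fk=\ftF i\cdot\fy$ of functions $1\to\ftF\kappa$, so that $\fk^\circ=\fy^\circ\cdot{(\ftF i)}^\circ$. Substituting this into $\mu^\fk(s)=\fk^\circ\cdot\eF s$ and invoking Proposition~\ref{p:18}(\ref{p:20}), which yields ${(\ftF i)}^\circ\cdot\eF s=\eF(i^\circ\cdot s)$, gives at once
\[
  \mu^\fk(s)=\fy^\circ\cdot{(\ftF i)}^\circ\cdot\eF s=\fy^\circ\cdot\eF(i^\circ\cdot s),
\]
as required. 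Statement \ref{p:25} is then simply the instance $s=i\cdot r$ of \ref{p:21}, together with the observation that a monomorphism $i$ satisfies $i^\circ\cdot i=1_Y$ (for injective $i$ the unit $1_Y\le i^\circ\cdot i$ of the adjunction $i\dashv i^\circ$ is an equality, by a direct pointwise computation). Indeed, \ref{p:21} gives $\mu^\fk(i\cdot r)=\fy^\circ\cdot\eF(i^\circ\cdot i\cdot r)$, and $i^\circ\cdot i\cdot r=r$ yields $\mu^\fk(i\cdot r)=\fy^\circ\cdot\eF r$.

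For the last inequality I would first record the two relevant values: taking $r=1_Y$ in \ref{p:25} (equivalently $s=i$ in \ref{p:21}) gives $\mu^\fk(i)=\fy^\circ\cdot\eF 1_Y$, while \ref{p:25} itself gives $\mu^\fk(i\cdot r)=\fy^\circ\cdot\eF r$. The claim thus reduces to
\[
  (\fy^\circ\cdot\eF 1_Y)\multimapdot(\fy^\circ\cdot\eF r)\le\fy^\circ\multimapdot(\fy^\circ\cdot\eF r).
\]
Now the lax-extension axiom \ref{p:0} yields $1_{\ftF Y}\le\eF 1_Y$, whence $\fy^\circ=\fy^\circ\cdot 1_{\ftF Y}\le\fy^\circ\cdot\eF 1_Y$. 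Since the lift ${(-)}\multimapdot q$ is order-reversing in its first argument, Proposition~\ref{p:2}(\ref{p:4}) turns this into exactly the displayed inequality.

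All the calculations are elementary; the only points requiring care are bookkeeping of the types of the composites (so that every lift and converse is well formed) and the direction of monotonicity in the final step — enlarging the first operand of the lift from $\fy^\circ$ to $\fy^\circ\cdot\eF 1_Y$ shrinks the lift, which is precisely what produces the inequality. The one genuine use of the hypothesis in \ref{p:25} and \ref{p:30} is the injectivity of $i$, which upgrades the general inequality $1_Y\le i^\circ\cdot i$ to the equality $i^\circ\cdot i=1_Y$ that collapses $\eF(i^\circ\cdot i\cdot r)$ to $\eF r$.
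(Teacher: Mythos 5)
Your proof is correct and follows essentially the same route as the paper's: part \ref{p:21} via $\fk^\circ=\fy^\circ\cdot{(\ftF i)}^\circ$ and Proposition~\ref{p:18}(\ref{p:20}), part \ref{p:25} by instantiating the first part at $s=i\cdot r$ and using $i^\circ\cdot i=1_Y$, and the final inequality from $1_{\ftF Y}\le\eF 1_Y$ (Condition~\ref{p:0}) combined with antitonicity of the lift in its first argument (Proposition~\ref{p:2}(\ref{p:4})). The only difference is that you spell out the pointwise verification that injectivity upgrades $1_Y\le i^\circ\cdot i$ to an equality, which the paper states without proof.
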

\begin{proof}
  Note that \(\fk = \ftF i(\fy)\) means \(\fk = \ftF i \cdot \fy\),
  when considering elements as functions.
  \begin{enumerate}
    \item
    \(
      \mu^\fk (s)
      = \fk^\circ \cdot \eF s
      = \fy^\circ \cdot {\ftF i}^\circ \cdot \eF s
      = \fy^\circ \cdot \eF (i^\circ \cdot s)
    \).
    \item
      \begin{enumerate}
        \item Since \(i\) is a monomorphism, \(i^\circ \cdot i = 1_Y\). Therefore, the claim
          follows by applying \ref{p:21} with \(s = i \cdot r\).
        \item Applying~\ref{p:25}, and recalling Proposition~\ref{p:2}(\ref{p:4})
          and Condition~\ref{p:0}, yields
          \begin{align*}
            \mu^\fk (i) \multimapdot \mu^\fk (i \cdot r)
            & = (\fy^\circ \cdot \eF 1_Y) \multimapdot (\fy^\circ \cdot \eF r) \\
            & \leq \fy^\circ \multimapdot (\fy^\circ \cdot \eF r). \qedhere
          \end{align*}
      \end{enumerate}
  \end{enumerate}
\end{proof}

\begin{corollary}
  \label{p:23}
  Let \(\eF \colon \Rels{\V} \to \Rels{\V}\) be a lax extension of a functor \(\ftF \colon \SET \to \SET\),
  \(i\colon \lambda \to \kappa\) a function between cardinals,
  \(\fl\) an element of \(\ftF \lambda\), and \(\fk = \ftF i (\fl)\).
  \begin{enumerate}
    \item \label{p:24} \(\eF^{\mu^\fl} \leq \eF^{\mu^\fk}\).
    \item If \(i\) is mono, then \(\eF^{\mu^\fl} = \eF^{\mu^\fk}\).
  \end{enumerate}
\end{corollary}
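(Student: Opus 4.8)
The plan is to reduce both parts to a single reindexing identity: for every $\V$-relation $s \colon Z \relto \kappa$,
\[
  \mu^\fk(s) = \mu^\fl(i^\circ \cdot s).
\]
This is exactly Lemma~\ref{p:19}(\ref{p:21}) read with $Y := \lambda$ and $\fy := \fl$, so that the hypothesis $\fk = \ftF i(\fl)$ matches: that lemma gives $\mu^\fk(s) = \fl^\circ \cdot \eF(i^\circ \cdot s)$, and since the Moss lifting $\mu^\fl$ is by definition computed as $\mu^\fl(t) = \fl^\circ \cdot \eF(t)$ for every $t \colon Z \relto \lambda$, the right-hand side is precisely $\mu^\fl(i^\circ \cdot s)$. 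So the first step is just to record this identity.

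Next I would substitute it into the defining formula~\eqref{p:44} of the Kantorovich extension, using associativity to rewrite $\mu^\fk(g \cdot r) = \mu^\fl\big((i^\circ \cdot g) \cdot r\big)$. This yields
\[
  \eF^{\mu^\fk} r
  = \bigwedge_{g \colon Y \relto \kappa}
    \mu^\fl(i^\circ \cdot g) \multimapdot \mu^\fl\big((i^\circ \cdot g) \cdot r\big),
\]
to be compared with
\[
  \eF^{\mu^\fl} r
  = \bigwedge_{h \colon Y \relto \lambda}
    \mu^\fl(h) \multimapdot \mu^\fl(h \cdot r).
\]
Thus $\eF^{\mu^\fk} r$ is an infimum of terms of exactly the same shape as those defining $\eF^{\mu^\fl} r$, but indexed only over the $\lambda$-relations of the form $h = i^\circ \cdot g$ with $g \colon Y \relto \kappa$.

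With this in hand, part~(1) is immediate: the indexing set $\{\, i^\circ \cdot g \mid g \colon Y \relto \kappa \,\}$ is a subset of $\Rels{\V}(Y,\lambda)$, and an infimum over a larger indexing set is no larger, so $\eF^{\mu^\fl} r \le \eF^{\mu^\fk} r$ for every $r$. For part~(2), when $i$ is a monomorphism we have $i^\circ \cdot i = 1_\lambda$ (as already used in the proof of Lemma~\ref{p:19}); hence every $h \colon Y \relto \lambda$ lies in the image, since $h = (i^\circ \cdot i) \cdot h = i^\circ \cdot (i \cdot h)$ with $i \cdot h \colon Y \relto \kappa$. The two indexing sets then coincide and the two infima are equal.

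The whole argument is bookkeeping once the identity $\mu^\fk(-) = \mu^\fl(i^\circ \cdot -)$ is established; the only point requiring genuine care is the direction of the index comparison -- one must check that reindexing along $g \mapsto i^\circ \cdot g$ always lands inside $\Rels{\V}(Y,\lambda)$ (giving the inequality of part~(1) unconditionally) and is surjective \emph{precisely} when $i$ is mono (upgrading to the equality of part~(2)). I expect no obstacle beyond correctly tracking these composites.
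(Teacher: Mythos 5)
Your proof is correct and follows the paper's intended route: the paper states Corollary~\ref{p:23} without proof as an immediate consequence of Lemma~\ref{p:19}, and your reindexing identity \(\mu^{\fk}(s)=\mu^{\fl}(i^{\circ}\cdot s)\) is exactly Lemma~\ref{p:19}(\ref{p:21}) restated, with the mono case handled via \(i^{\circ}\cdot i=1_{\lambda}\) just as in Lemma~\ref{p:19}(\ref{p:25}), followed by the correct comparison of infima over the index sets in formula~\eqref{p:44}. The only blemish is the incidental claim that the reindexing \(g\mapsto i^{\circ}\cdot g\) is surjective \emph{precisely} when \(i\) is mono --- only the ``if'' direction is needed or established --- but this does not affect the argument.
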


Lemma~\ref{p:19} allows approximating a lax extension with respect to the cardinality of the codomain of \(\V\)-relations.

\begin{corollary}
  \label{p:22}
  Let \(\eF \colon \Rels{\V} \to \Rels{\V}\) be a lax extension of a functor \(\ftF \colon \SET \to \SET\),
  \(\kappa\) a cardinal, and \(Y\) a set such that \(|Y| \leq \kappa\).
  Consider the set \(M = \{ \mu^\fk \mid \fk \in \ftF \kappa \}\).
  Then, for every \(\V\)-relation \(r \colon X \relto Y\),
  \(\eF r = \eF^M r\).
\end{corollary}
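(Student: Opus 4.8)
The plan is to prove the two inequalities \(\eF r\le\eF^{M}r\) and \(\eF^{M}r\le\eF r\) separately. The first holds for \emph{every} \(\V\)-relation \(r\), with no hypothesis on cardinalities: each \(\mu^{\fk}\) with \(\fk\in\ftF\kappa\) is by construction a Moss lifting of \(\eF\), so Corollary~\ref{p:27} gives \(\eF\le\eF^{\mu^{\fk}}\); taking the infimum over \(\fk\in\ftF\kappa\) yields \(\eF\le\bigwedge_{\fk}\eF^{\mu^{\fk}}=\eF^{M}\).

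For the reverse inequality I would use the hypothesis \(|Y|\le\kappa\) to fix a monomorphism \(i\colon Y\to\kappa\). Fix an element \(\fy\in\ftF Y\) and put \(\fk=\ftF i(\fy)\in\ftF\kappa\), so that \(\mu^{\fk}\in M\). Reading \(i\) as a \(\V\)-relation \(i\colon Y\relto\kappa\), it is one of the indices \(g\) appearing in the defining infimum of \(\eF^{\mu^{\fk}}\) in Theorem~\ref{p:17}, whence
\[
  \eF^{M}r\le\eF^{\mu^{\fk}}r\le\mu^{\fk}(i)\multimapdot\mu^{\fk}(i\cdot r)\le\fy^{\circ}\multimapdot(\fy^{\circ}\cdot\eF r),
\]
the last step being exactly the final inequality of Lemma~\ref{p:19}, which is where the monomorphism hypothesis on \(i\) is used.

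It then remains to evaluate the rightmost \(\V\)-relation on the diagonal. Unravelling the lift over the one-element set \(1\), one computes
\[
  \big(\fy^{\circ}\multimapdot(\fy^{\circ}\cdot\eF r)\big)(\fx,\fy')=\hom\big(\fy^{\circ}(\fy'),\,\eF r(\fx,\fy)\big),
\]
which is \(\top\) when \(\fy'\neq\fy\) and equals \(\eF r(\fx,\fy)\) when \(\fy'=\fy\), since \(\hom(k,-)=\id\). Evaluating the displayed inequality at \((\fx,\fy)\) therefore gives \(\eF^{M}r(\fx,\fy)\le\eF r(\fx,\fy)\); as \(\fx\in\ftF X\) and \(\fy\in\ftF Y\) were arbitrary, this proves \(\eF^{M}r\le\eF r\) and hence the equality.

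The only subtle point, and the sole place the cardinality hypothesis is needed, is the choice of the monomorphism \(i\): it licenses the sharp bound of Lemma~\ref{p:19} rather than the weaker estimate valid for an arbitrary \(i\), and it is also what makes the lift collapse, on the diagonal, to the single relevant ``column'' \(\eF r(-,\fy)\). Everything else is a routine application of the universal property of lifts.
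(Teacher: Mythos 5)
Your proof is correct and is essentially the argument the paper intends for this corollary: \(\eF\le\eF^{M}\) via Corollary~\ref{p:27}, and the reverse inequality by choosing a monomorphism \(i\colon Y\to\kappa\) (available since \(|Y|\le\kappa\)), instantiating the infimum defining \(\eF^{\mu^{\fk}}\) at \(g=i\) with \(\fk=\ftF i(\fy)\), applying Lemma~\ref{p:19}, and evaluating the resulting lift \(\fy^{\circ}\multimapdot(\fy^{\circ}\cdot\eF r)\) at the pair \((\fx,\fy)\), which is exactly how Lemma~\ref{p:19} is meant to ``approximate the lax extension with respect to the cardinality of the codomain''. One cosmetic remark: the collapse of the lift on the diagonal is just the pointwise computation of a lift along the point relation \(\fy^{\circ}\) (using \(\hom(k,-)=\id\) and \(\hom(\bot,-)=\top\)) and holds for any choice of \(i\); the monomorphism hypothesis enters only through Lemma~\ref{p:19}(2), not through that evaluation.
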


\noindent Therefore, as the collection of all Moss liftings of a lax extension is not larger than the class of all sets,

\begin{theorem}
  \label{d:thm:1}
  Every lax extension of a \(\SET\)-functor to \(\Rels{\V}\) is induced by its class of Moss liftings.
\end{theorem}

From Theorem~\ref{p:34} we obtain

\begin{corollary}
  \label{p:35}
  Every lax extension of a \(\SET\)-functor to \(\Rels{\V}\) is an initial extension with respect to the lax extensions of the functors \(\DD{\kappa} \colon \SET \to \SET\) (See Theorem~\ref{p:34}).
\end{corollary}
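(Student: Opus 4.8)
The plan is to read the statement off directly from Theorem~\ref{d:thm:1} together with the description of the Kantorovich extension as an initial extension recorded in Examples~\ref{p:47}(\ref{d:exs:1}) and Remark~\ref{p:34}; essentially all of the substance has already been front-loaded into Theorem~\ref{d:thm:1}, so the corollary itself should be a short deduction.

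Concretely, I would proceed in two steps. First, I would invoke Theorem~\ref{d:thm:1} to write \(\eF = \eF^M\), where \(M\) is the class of all Moss liftings of \(\eF\); this is the genuine input and relies on the approximation Corollary~\ref{p:22} and the size remark preceding Theorem~\ref{d:thm:1} that \(M\) is no larger than the class of all sets, which legitimises the defining infimum \(\eF^M = \bigwedge_{\mu\in M}\eF^\mu\). Second, I would unfold this infimum. For each Moss lifting \(\mu = \mu^\fk \in M\) of arity \(\kappa\), Examples~\ref{p:47}(\ref{d:exs:1}) gives \(\eF^\mu r = \overline\mu_Y^\circ \cdot \eDD\kappa r \cdot \overline\mu_X\), i.e.\ \(\eF^\mu\) is the initial extension of \(\ftF\) along the single natural transformation \(\overline\mu \colon \ftF \to \DD\kappa\) with respect to the canonical lax extension \(\eDD\kappa\). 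Taking the infimum over \(\mu \in M\) and comparing with the explicit formula \(\eF_\alpha r = \bigwedge_i \alpha_Y^\circ \cdot \widehat{\ftF_i} r \cdot \alpha_X\) for the initial extension of a cone, I would conclude that \(\eF^M\) is exactly the initial extension along the cone \((\overline\mu \colon \ftF \to \DD{\arity(\mu)})_{\mu\in M}\) with respect to the family of lax extensions \(\eDD\kappa\) --- which is precisely the content spelled out in Remark~\ref{p:34}. Chaining the two steps gives that \(\eF\) itself is this initial extension, as claimed.

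I do not expect a genuine obstacle here. The only points that need a moment's attention are bookkeeping ones: distinct Moss liftings in \(M\) carry distinct arities \(\kappa\), so the cone has components landing in different neighborhood functors \(\DD\kappa\), but the definition of initial extension already allows an arbitrary indexed family of target functors, each with its own lax extension, so this causes no difficulty; and the class \(M\) is a proper class \emph{a priori}, yet the discussion preceding Theorem~\ref{d:thm:1} has already reduced it to set size, so the infimum, and hence the initial extension, are well defined. Thus the entire weight of the corollary rests on Theorem~\ref{d:thm:1}, and the corollary follows by combining it with Remark~\ref{p:34}.
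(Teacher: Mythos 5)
Your proposal is correct and follows the paper's own route: the paper likewise treats Corollary~\ref{p:35} as an immediate consequence of Theorem~\ref{d:thm:1} (writing \(\eF=\eF^{M}\) for the class \(M\) of all Moss liftings) combined with Remark~\ref{p:34}, which records that \(\eF^{M}\) is, by the very definition of the Kantorovich extension and Examples~\ref{p:47}(\ref{d:exs:1}), the initial extension along the cone \((\overline{\mu}\colon\ftF\to\DD{\kappa})_{\mu\in M}\) with respect to the extensions \(\eDD{\kappa}\). One small correction to your bookkeeping: the paper does \emph{not} reduce \(M\) to set size before Theorem~\ref{d:thm:1} (it only observes that \(M\) is no larger than the class of all sets; reduction to a genuine set happens only for accessible functors in Section~\ref{sec:small-lax}), and the infimum \(\bigwedge_{\mu\in M}\eF^{\mu}\) over this proper class is nonetheless well defined because it is computed pointwise in the complete lattice \(\V\) --- the paper explicitly allows infima over a \emph{class} of lax extensions --- so neither this slip nor your incidental (and false, but unused) claim that distinct Moss liftings carry distinct arities affects the argument.
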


From Proposition~\ref{p:64}, we obtain

\begin{corollary}
  Every \(\V\)-enriched lax extension of a \(\SET\)-functor to \(\Rels{\V}\) is induced by a \emph{class} of \(\V\)-enriched predicate liftings.
\end{corollary}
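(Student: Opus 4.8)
The plan is to read off the desired class directly from the representation theorem just proved. Given a \(\V\)-enriched lax extension \(\eF\colon\Rels{\V}\to\Rels{\V}\), I would take \(M\) to be the class of \emph{all} Moss liftings of \(\eF\)---that is, all \(\mu^\fk\) for \(\fk\in\ftF\kappa\) as \(\kappa\) ranges over the cardinals. By Theorem~\ref{d:thm:1} this class already satisfies \(\eF=\eF^M\), so \(\eF\) is the Kantorovich extension \(\bigwedge_{\mu\in M}\eF^\mu\) of \(M\); in the terminology of the paper, \(\eF\) is represented by \(M\). Thus the content of the statement reduces entirely to checking the \(\V\)-enrichment of the witnessing liftings.

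It then suffices to confirm that each member of \(M\) is \(\V\)-enriched. This is exactly the second assertion of Proposition~\ref{p:64}: when \(\eF\) is \(\V\)-enriched, every Moss lifting \(\mu^\fk\) of \(\eF\) is a \(\V\)-enriched predicate lifting. Since this applies uniformly to every \(\fk\in\ftF\kappa\) and every cardinal \(\kappa\), the whole class \(M\) consists of \(\V\)-enriched predicate liftings, and the statement follows by combining the two facts.

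I do not anticipate a genuine obstacle, as both ingredients are already in place; the argument is a direct combination of Theorem~\ref{d:thm:1} and Proposition~\ref{p:64}. The only point deserving a word of care is the size issue that motivates the phrasing ``class'': since \(\kappa\) ranges over a proper class of cardinals and, for each, over \(\ftF\kappa\), the collection \(M\) is genuinely a class rather than a set. This causes no difficulty, because the Kantorovich extension is defined for classes of monotone predicate liftings and, as recorded earlier, infima over a class of lax extensions are again lax extensions---the hom-objects of \(\Rels{\V}\) being complete lattices. As an internal consistency check, one may note that the \(\V\)-enrichment of \(\eF^M\) is also predicted by Remark~\ref{p:34}.
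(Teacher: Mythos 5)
Your proof is correct and is precisely the paper's (implicit) argument: the corollary follows by instantiating Theorem~\ref{d:thm:1} with the class of all Moss liftings and invoking the second assertion of Proposition~\ref{p:64} to see that these are all \(\V\)-enriched. Your remarks on size and on Remark~\ref{p:34} are accurate but not needed beyond what the paper already records.
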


The next result is a generalization of Theorem~\ref{p:36} mentioned in the introduction.
Note that, for \(\V=\two\) and a lax extension \(\eF\colon \Rels{\V}\to \Rels{\V}\) that preserves converses, \(\eF 1_{X}\) is an equivalence relation on \(X\).
Hence, \(\eF\) is identity-preserving if and only if the ordered set \((\ftF X,\eF1_{X})\) is anti-symmetric.

\begin{corollary}
  \label{d:cor:1}
  A functor \(\ftF \colon \SET \to \SET\) has a separating class of \(\V\)-valued monotone predicate liftings if and only if there is a lax extension \(\eF\colon \Rels{\V}\to \Rels{\V}\) such that, for all sets \(X\), the \(\V\)-category \((\ftF X,\eF1_{X})\) is separated.
\end{corollary}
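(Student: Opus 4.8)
The plan is to prove both implications through the identification, recorded in Remark~\ref{p:34} and Examples~\ref{p:47}(\ref{d:exs:1}), of the Kantorovich extension \(\eF^M\) as the initial extension of the cone \((\overline{\mu}\colon\ftF\to\DD{\kappa})_{\mu\in M}\) along the extensions \(\eDD{\kappa}\). The structural fact that drives everything is that each \(\V\)-category \((\DD{\kappa}X,\eDD{\kappa}1_X)\) is separated (Examples~\ref{p:47}(\ref{d:exs:1})). The key computation is that, for \(r=1_X\), the initial-extension formula reads \(\eF^M 1_X=\bigwedge_{\mu\in M}\overline{\mu}_X^\circ\cdot\eDD{\kappa}1_X\cdot\overline{\mu}_X\), and since each \(\overline{\mu}_X\) is a \emph{function}, conjugation by it evaluates pointwise: \((\overline{\mu}_X^\circ\cdot\eDD{\kappa}1_X\cdot\overline{\mu}_X)(\fx,\fy)=\eDD{\kappa}1_X(\overline{\mu}_X\fx,\overline{\mu}_X\fy)\). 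Thus separatedness of \((\ftF X,\eF^M 1_X)\) will be governed entirely by the joint injectivity of the maps \(\overline{\mu}_X\), linking the two similarly named but distinct notions.

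For the forward implication, I start from a separating class \(M\) of monotone predicate liftings and set \(\eF=\eF^M\), which is a lax extension because it is an infimum of the Kantorovich extensions \(\eF^\mu\) (each a lax extension by Theorem~\ref{p:17}, and the infimum over a class of lax extensions is again one). Suppose \(k\le\eF 1_X(\fx,\fy)\) and \(k\le\eF 1_X(\fy,\fx)\) for \(\fx,\fy\in\ftF X\). Taking the infimum over \(\mu\) apart and using the pointwise formula above, I get for every \(\mu\in M\) that both \(k\le\eDD{\kappa}1_X(\overline{\mu}_X\fx,\overline{\mu}_X\fy)\) and \(k\le\eDD{\kappa}1_X(\overline{\mu}_X\fy,\overline{\mu}_X\fx)\); separatedness of \((\DD{\kappa}X,\eDD{\kappa}1_X)\) then forces \(\overline{\mu}_X\fx=\overline{\mu}_X\fy\). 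As this holds for all \(\mu\), mono-ness of the separating cone yields \(\fx=\fy\), so \((\ftF X,\eF 1_X)\) is separated.

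For the converse, I take a lax extension \(\eF\) for which every \((\ftF X,\eF 1_X)\) is separated, and let \(M\) be the class of all Moss liftings of \(\eF\); these are monotone by Proposition~\ref{p:64}, and by Theorem~\ref{d:thm:1} one has \(\eF=\eF^M\). To see that \(M\) is separating, suppose \(\overline{\mu}_X\fx=\overline{\mu}_X\fy\) for all \(\mu\in M\). Feeding this into the same pointwise description of \(\eF 1_X=\eF^M 1_X\) and invoking reflexivity \(k\le\eDD{\kappa}1_X(\Phi,\Phi)\), each factor \(\eDD{\kappa}1_X(\overline{\mu}_X\fx,\overline{\mu}_X\fy)\) and \(\eDD{\kappa}1_X(\overline{\mu}_X\fy,\overline{\mu}_X\fx)\) is at least \(k\); hence \(k\le\eF 1_X(\fx,\fy)\) and \(k\le\eF 1_X(\fy,\fx)\), and separatedness of \((\ftF X,\eF 1_X)\) gives \(\fx=\fy\). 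This is exactly joint injectivity of the cone \((\overline{\mu}_X)_{\mu\in M}\).

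The only step requiring genuine care is the bookkeeping that converts the two-sided \(k\)-inequalities into pointwise conditions via conjugation by the functions \(\overline{\mu}_X\), keeping the converses and composition order in \(\Rels{\V}\) straight; I do not expect any deeper obstacle, since the facts that each \(\eF^\mu\) is a lax extension and that \(\eF=\eF^M\) for the class of all Moss liftings are already established.
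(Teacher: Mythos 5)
Your proof is correct and takes essentially the same route as the paper's: both directions go through the identification of the Kantorovich extension as the initial extension along the cone \((\overline{\mu}_X\colon \ftF X \to \DD{\kappa}X)\) into the separated \(\V\)-categories \((\DD{\kappa}X,\eDD{\kappa}1_X)\), using the class of all Moss liftings together with Theorem~\ref{d:thm:1} for one direction and the Kantorovich extension of the given separating class for the other. The only difference is cosmetic: the paper argues one direction contrapositively (from \(\fx\neq\fy\) it extracts a single \(\mu\) with \(k\nleq\eDD{\kappa}1_X(\overline{\mu}(\fx),\overline{\mu}(\fy))\)), whereas you argue directly via reflexivity of \(\eDD{\kappa}1_X\); the logical content is identical.
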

\begin{proof}
  Suppose that there is such a lax extension
  \(\eF\colon \Rels{\V}\to \Rels{\V}\). Let \(X\) be a set and
  \(\fx,\fy\in\ftF X\) with \(\fx\neq\fy\). By assumption, then have
  w.l.o.g.\ that \(k\nleq\eF 1_{X}(\fx,\fy)\). Hence, by
  Theorem~\ref{d:thm:1}, there is a predicate lifting
  \(\mu \colon \ftPVk\to\ftPV\ftF\) such that
  \begin{displaymath}
    k\nleq\DD{\kappa}(\overline{\mu}(\fx),\overline{\mu}(\fy)),
  \end{displaymath}
  and therefore \(\overline{\mu}(\fx)\neq\overline{\mu}(\fy)\) (see
  Example~\ref{p:47}(\ref{d:exs:1})). On the other hand, if \(\ftF\) has a
  separating class of monotone predicate liftings, then the Kantorovich
  extension of \(\ftF\) with respect to this class has the desired property by
  Example~\ref{p:47}(\ref{d:exs:1}).
\end{proof}

To conclude this section we note that quantale-valued lax extensions that preserve converses and satisfy the condition of Corollary~\ref{d:cor:1} lead to quantale-valued notions of bisimilarity that extend the canonical coalgebraic notion of behavioural equivalence.

\begin{proposition}
	Let \(\V\) be a non-trivial quantale,
	and let \(\eF \colon \Rels{\V} \to \Rels{\V}\) be a lax extension of a functor \(\ftF \colon \SET \to \SET\) that preserves converses and such that, for all sets \(X\), the \(\V\)-category \((\ftF X,\eF1_{X})\) is separated.
 	Then two states in an \(\ftF\)-coalgebra are behaviourally equivalent if and only if their \(\eF\)-bisimilarity is greater or equal than \(k\).
\end{proposition}
\begin{proof}
	Consider the lax homomorphisms of quantales \(\phi \colon \two \to \V\) defined by \(\phi(0) = \bot\) and \(\phi(1) = k\), and \(\psi \colon \V \to \two\) defined by \(\psi(v) = 1\) if \(k \leq v\) and \(\psi(v) = 0\), otherwise.
	It is well-known that lax homomorphisms of quantales give rive to lax functors between the corresponding categories of \(\V\)-relations \citep{HST14}.
	Hence, we obtain lax functors
	\[
		\phi \colon \REL \to \Rels{\V} \text{ and } \psi \colon \Rels{\V} \to \REL
	\]
	that act identically on sets and postcompose the lax homomorphisms of quantales with relations (interpreted as maps into the quantales).
	It is easy to see that if we start with a lax extension \(\eF \colon \Rels{\V} \to \Rels{\V}\) of \(\ftF\) that satisfies the conditions of the proposition, then, as \(\V\) is non-trivial, we obtain an identity-preserving lax extension \(\eF_2 \colon \REL \to \REL\) of \(\ftF\) that preserves converses as the composite
	\begin{displaymath}
		\begin{tikzcd}
			\Rels{\V} & \Rels{\V} \\
			\REL     & \REL.
			\ar[from=1-1, to=1-2, "\eF"]
			\ar[from=1-2, to=2-2, "\psi"]
			\ar[from=2-1, to=1-1, "\phi"]
			\ar[from=2-1, to=2-2, "\eF_2"']
		\end{tikzcd}
	\end{displaymath}
	Therefore, by \citet[Theorem~14]{MV15}, \(\eF_2\)-bisimilarity coincides with behavioural equivalence.
	Moreover, as \(\eF\)-bisimilarity is itself an \(\eF\)-bisimulation, it follows that two states in an \(\ftF\)-coalgebra are \(\eF_2\)-bisimilar if and only if their \(\eF\)-bisimilarity is greater or equal than \(k\).
\end{proof}

%
%
%
%
%
%
%
%
%
%
%
%
%

\section{Small Lax Extensions}
\label{sec:small-lax}

We next discuss the possibility of recovering a lax extension from a \emph{set} of predicate liftings.

\begin{definition}
  Let \(\lambda\) be a regular cardinal.
  A lax extension \(\eF \colon \Rels{\V} \to \Rels{\V}\) of a functor \(\ftF \colon\SET\to\SET\) is \df{\(\lambda\)-small} if it can be obtained as the Kantorovich extension of a set of \(\kappa\)-ary predicate liftings with \(\kappa<\lambda\).
 We call \(\eF\) small if it is \(\lambda\)-small for some regular cardinal \(\lambda\).
\end{definition}

We will see next that every lax extension of an accessible functor is
small. We recall that, for a regular cardinal \(\lambda\), a functor
\(\ftF\colon \SET\to \SET \) is called \df{\(\lambda\)-accessible} if \(\ftF\)
preserves \(\lambda\)-directed colimits. Furthermore, a functor
\(\ftF\colon \SET\to \SET \) is called accessible if \(\ftF\) is
\(\lambda\)-accessible for some regular cardinal \(\lambda\).

Clearly, every \(\lambda\)-accessible functor
\(\ftF\colon \SET\to \SET \) is \df{\(\lambda\)-bounded}, that is, for
every set \(X\) and every \(\fx\in \ftF X\), there exists a subset
\(m \colon A\to X\) with \(|A|<\lambda\) and \(\fx\) is in the image
of \(\ftF m\). This property is in fact equivalent to accessibility:

\begin{theorem}\citep{AMSW19}
  A functor \(\ftF \colon\SET\to\SET\) is \(\lambda\)-accessible if and only if \(\ftF\) is \(\lambda\)-bounded.
\end{theorem}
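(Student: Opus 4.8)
The plan is to prove the two implications separately, treating the easy direction (\(\lambda\)-accessible \(\Rightarrow\) \(\lambda\)-bounded) first and isolating injectivity of a canonical comparison map as the real work in the converse.

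For the forward direction, I would exploit the fact that every set \(X\) is the colimit of the \(\lambda\)-directed diagram of its subsets of cardinality less than \(\lambda\), that is, \(X = \varinjlim_{A \in \mathcal{P}_{<\lambda}(X)} A\) with the inclusions as connecting maps; that this diagram is \(\lambda\)-directed is exactly where the regularity of \(\lambda\) enters, since a union of fewer than \(\lambda\) sets each of size less than \(\lambda\) again has size less than \(\lambda\). Assuming \(\ftF\) is \(\lambda\)-accessible, applying \(\ftF\) yields \(\ftF X = \varinjlim_{A} \ftF A\). Since \(\lambda\)-directed colimits in \(\SET\) are computed so that every element of the colimit lies in the image of one of the coprojections, every \(\fx \in \ftF X\) is of the form \(\ftF(m)(\mathfrak{a})\) for some inclusion \(m \colon A \hookrightarrow X\) with \(|A| < \lambda\) and some \(\mathfrak{a} \in \ftF A\); this is precisely \(\lambda\)-boundedness. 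I expect this direction to be routine.

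For the converse, the key reduction is that \(\ftF\) preserves all \(\lambda\)-directed colimits as soon as the canonical comparison maps \(\phi_X \colon \varinjlim_{A \in \mathcal{P}_{<\lambda}(X)} \ftF A \longrightarrow \ftF X\) are bijections for every set \(X\), so it suffices to establish that each \(\phi_X\) is a bijection. Surjectivity of \(\phi_X\) is immediate from \(\lambda\)-boundedness, essentially by unwinding the definitions as in the forward direction. The substantial step is injectivity: given \(A, B \in \mathcal{P}_{<\lambda}(X)\) and elements \(\mathfrak{a} \in \ftF A\), \(\mathfrak{b} \in \ftF B\) with \(\ftF(m_A)(\mathfrak{a}) = \ftF(m_B)(\mathfrak{b})\) in \(\ftF X\), one must produce a single small subset on which they already agree. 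First I would pass to the union \(C = A \cup B\), which is again small by regularity of \(\lambda\), and replace \(\mathfrak{a}, \mathfrak{b}\) by their images \(\alpha, \beta \in \ftF C\), reducing the problem to the following: \(C \subseteq X\) small and \(\alpha, \beta \in \ftF C\) with \(\ftF(\iota_C)(\alpha) = \ftF(\iota_C)(\beta)\), where \(\iota_C \colon C \hookrightarrow X\) is the inclusion; the goal is then a small intermediate \(C \subseteq E \subseteq X\) with \(\ftF(\iota_{C,E})(\alpha) = \ftF(\iota_{C,E})(\beta)\).

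The hard part will be exactly this localization of the witness of equality: the equation already holds at \(E = X\), and one must descend it to a subset of size less than \(\lambda\). I would handle this by a cofinality/closure argument: intuitively, the equality \(\ftF(\iota_C)(\alpha) = \ftF(\iota_C)(\beta)\) is already visible in \(\ftF\) of a small piece of \(X\), and one pins that piece down either by building a suitable \(\lambda\)-chain of small subsets and using regularity to close off at a limit stage, or, more cleanly, by a Löwenheim--Skolem-style elementary-submodel argument that produces a submodel of size less than \(\lambda\) reflecting the relevant equality and meeting \(X\) in the desired \(E\). Once injectivity is secured, \(\phi_X\) is a bijection for all \(X\), and the standard fact that preservation of these canonical colimits entails preservation of all \(\lambda\)-directed colimits finishes the proof. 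I anticipate that verifying the descent step rigorously---rather than the bookkeeping around it---is where the essential content, and the genuine use of regularity of \(\lambda\), resides.
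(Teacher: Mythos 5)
Your forward direction is fine, but the converse contains a genuine gap, and moreover it locates the difficulty in exactly the wrong place. The closing appeal to ``the standard fact that preservation of these canonical colimits entails preservation of all \(\lambda\)-directed colimits'' is circular: bijectivity of your comparison maps \(\phi_X\) is \emph{equivalent} to \(\lambda\)-boundedness (surjectivity of \(\phi_X\) is boundedness verbatim, and injectivity holds for \emph{every} set functor, as explained below), so the implication you invoke as standard is precisely the nontrivial content of the theorem. The obstruction is that a general \(\lambda\)-directed diagram \(D \colon I \to \SET\) with colimit cocone \(c_i \colon D_i \to X\) has non-injective connecting maps and non-injective cocone components, so it cannot simply be traded for the diagram of small subsets of \(X\): surjectivity of the comparison map \(\varinjlim \ftF D_i \to \ftF X\) does transfer along the lines of your forward direction, but injectivity --- if \(y,z \in \ftF D_i\) satisfy \(\ftF c_i(y) = \ftF c_i(z)\), one needs \(\ftF d_{ik}(y) = \ftF d_{ik}(z)\) for some connecting map \(d_{ik}\) --- is the crux. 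The split-mono trick only yields equality of the images in \(\ftF(c_i[D_i])\), and the image \(c_i[D_i]\) is not a stage of the diagram; bridging that gap (e.g.\ via \(\lambda\)-directed colimits of the quotients \(D_i \twoheadrightarrow D_i/\ker d_{ik}\)) is the actual substance of \citet{AMSW19}, which is also all the paper itself offers here: it cites the result and notes only that accessible implies bounded is clear.

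Conversely, the step you single out as the essential content --- injectivity of \(\phi_X\), for which you propose a L\"owenheim--Skolem or chain-closure descent --- is trivial in \(\SET\) and needs no such machinery: every inclusion of a \emph{nonempty} subset \(C \subseteq X\) admits a retraction, so \(\ftF(\iota_C)\) is a split monomorphism; hence if \(\alpha \in \ftF A\) and \(\beta \in \ftF B\) have equal images in \(\ftF X\), their images in \(\ftF(A \cup B)\) already coincide, and the nonempty subsets of size \(<\lambda\) are cofinal in \(\mathcal{P}_{<\lambda}(X)\) (the case \(X = \varnothing\) being degenerate). In particular, regularity of \(\lambda\) enters only where you already used it in the forward direction, to make \(\mathcal{P}_{<\lambda}(X)\) \(\lambda\)-directed --- not in any descent argument. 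So your proposal proves, with unnecessary effort, a statement equivalent to \(\lambda\)-boundedness, and then defers the entire theorem to an unproved ``standard fact''; to repair it you would have to prove, not cite, the passage from canonical subset colimits to arbitrary \(\lambda\)-directed colimits.
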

\noindent An immediate consequence of the result above is an algebraic
presentation of accessible functors.

\begin{theorem}\citep{AMMU15}
  \label{d:thm:2}
  Let \(\lambda\) be a regular cardinal and \(\ftF\colon \SET\to \SET \) be a functor.
  The following assertions are equivalent.
  \begin{tfae}
  \item\label{d:item:1} \(\ftF\) is \(\lambda\)-bounded
  \item\label{d:item:2} \(\ftF\) is a colimit of representable functors \(\hom(X,-)\) where
    \(|X|<\lambda\).
  \item There exists a small epi-cocone
    \begin{displaymath}
      {(\hom(X_{i},-) \longrightarrow\ftF)}_{i\in I}
    \end{displaymath}
    where, for each \(i\in I\), \(|X_{i}|<\lambda\).
  \end{tfae}
\end{theorem}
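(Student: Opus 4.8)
The plan is to prove the cycle (ii)$\Rightarrow$(iii)$\Rightarrow$(i)$\Rightarrow$(ii), where essentially all of the work sits in the last implication. The implication (ii)$\Rightarrow$(iii) is immediate: colimits in $[\SET,\SET]$ are computed pointwise, and in $\SET$ every colimit cocone is jointly surjective, hence jointly epic; being indexed by a small diagram it is a small epi-cocone. For (iii)$\Rightarrow$(i), suppose $(\hom(X_i,-)\to\ftF)_{i\in I}$ is a small epi-cocone with $|X_i|<\lambda$. By the Yoneda lemma each component corresponds to an element $\fx\in\ftF X_i$ and sends $f\colon X_i\to Y$ to $\ftF f(\fx)$. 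Given $\fy\in\ftF Y$, joint epimorphicity yields some $i$ and $f\colon X_i\to Y$ with $\fy=\ftF f(\fx)$; factoring $f$ as a surjection onto its image followed by the inclusion $m\colon f[X_i]\hookrightarrow Y$ gives $|f[X_i]|\le|X_i|<\lambda$ and exhibits $\fy$ in the image of $\ftF m$, so $\ftF$ is $\lambda$-bounded.

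The core is (i)$\Rightarrow$(ii). I would form the category of elements $\mathrm{el}(\ftF)$, with objects $(X,\fx)$ for $\fx\in\ftF X$ and morphisms $h\colon(X,\fx)\to(X',\fx')$ the functions with $\ftF h(\fx)=\fx'$, and let $\mathcal{D}\subseteq\mathrm{el}(\ftF)$ be the full subcategory on objects with $|X|<\lambda$; choosing one representative per isomorphism type of small domain makes $\mathcal{D}$ essentially small. The assignment $(X,\fx)\mapsto\hom(X,-)$ defines a diagram $\mathcal{D}^{\mathrm{op}}\to[\SET,\SET]$, and the elements $\fx$ assemble, via Yoneda, into a cocone with apex $\ftF$, inducing a comparison natural transformation $\theta$ from its colimit to $\ftF$. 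It then suffices to show each $\theta_Y$ is a bijection. Surjectivity is exactly $\lambda$-boundedness. For injectivity, suppose $f\colon X\to Y$ and $f'\colon X'\to Y$ with $(X,\fx),(X',\fx')\in\mathcal{D}$ satisfy $\ftF f(\fx)=\ftF f'(\fx')$; I would set $B=f[X]\cup f'[X']\subseteq Y$, so that $|B|<\lambda$ since $\lambda$ is a regular cardinal, and corestrict to the inclusion $j\colon B\hookrightarrow Y$, writing $f=j\circ f_0$ and $f'=j\circ f_0'$. The morphisms $f_0\colon(X,\fx)\to(B,\ftF f_0(\fx))$ and $f_0'\colon(X',\fx')\to(B,\ftF f_0'(\fx'))$ of $\mathcal{D}$ identify, inside the colimit, the class of $f$ with $j$ tagged by $\ftF f_0(\fx)$ and the class of $f'$ with $j$ tagged by $\ftF f_0'(\fx')$.

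The main obstacle is precisely reconciling these two tags in $\ftF B$. When $B\ne\varnothing$ the inclusion $j$ is a split monomorphism, hence so is $\ftF j$ (functors preserve split monos), so $\ftF j$ is injective and the two tags coincide. When $B=\varnothing$, so $X=X'=\varnothing$, and $Y\ne\varnothing$, I would factor $\varnothing\to Y$ through a point $p\colon 1\to Y$; since $p$ is a split mono, $\ftF p$ is injective, and from $\ftF(\varnothing\to Y)(\fx)=\ftF(\varnothing\to Y)(\fx')$ this forces $\ftF(\varnothing\to 1)(\fx)=\ftF(\varnothing\to 1)(\fx')$, so both objects map compatibly to $(1,\ftF(\varnothing\to 1)(\fx))\in\mathcal{D}$ and are thereby connected; the case $Y=\varnothing$ is trivial. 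Hence $\theta_Y$ is injective, $\theta$ is an isomorphism, and $\ftF$ is a colimit of the representables $\hom(X,-)$ with $|X|<\lambda$, closing the cycle.

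Equivalently, the same computation shows that the inclusion $\mathcal{D}\hookrightarrow\mathrm{el}(\ftF)$ is initial, so (i)$\Rightarrow$(ii) may instead be deduced from the density (co-Yoneda) presentation of $\ftF$ as the colimit of $\hom(X,-)$ over its entire category of elements, restricted along this initial subcategory. Either way, the decisive ingredient is that endofunctors of $\SET$ preserve split monomorphisms, which is what makes the two factorizations agree and is where the delicate behaviour at $\varnothing$ must be handled with care.
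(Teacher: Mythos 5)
Your proof is correct and takes essentially the same route as the paper, which cites \citet{AMMU15} and sketches (i)\(\Rightarrow\)(ii) exactly as the co-Yoneda presentation of \(\ftF\) over its category of elements restricted along the cofinal subcategory of elements \((X,\fx)\) with \(|X|<\lambda\) -- your comparison-isomorphism computation (surjectivity from \(\lambda\)-boundedness, injectivity from the zig-zag through \(B=f[X]\cup f'[X']\)) is precisely the nonemptiness-and-connectedness content of that cofinality claim, as you yourself note in your closing paragraph. The remaining implications, which the paper leaves entirely to the citation, you handle by standard arguments, and your careful treatment of the split-mono step at \(B=\varnothing\) via a point \(1\to Y\) is a genuine subtlety correctly resolved.
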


\begin{remark}
  The implication \ref{d:item:1}\(\implies\)\ref{d:item:2} above can be
  justified as follows. First recall \citep{Mac98} that
  \(\ftF \colon\SET\to\SET\) is a colimit of the (large) diagram
  \begin{displaymath}
    {(\text{elements \((X,\fx)\) of \(\ftF\)})}^{\op} \longrightarrow [\SET,\SET].
  \end{displaymath}
  By~\ref{d:item:1},
  \begin{displaymath}
    {(\text{elements \((X,\fx)\) of \(\ftF\) where \(|X|<\lambda\)})}^{\op} \longrightarrow
    {(\text{elements \((X,\fx)\) of \(\ftF\)})}^{\op}
  \end{displaymath}
  is cofinal.
\end{remark}

\begin{proposition}
  \label{p:100}
  Every lax extension of an \(\lambda\)-accessible functor is \(\lambda\)-small.
  In fact, \(\eF=\eF^{M_{\lambda}}\) for
  \begin{displaymath}
    M_{\lambda}=\{\text{all \(\alpha\)-ary Moss-liftings, \(\alpha<\lambda\)}\}.
  \end{displaymath}
\end{proposition}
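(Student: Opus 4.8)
The plan is to reduce the statement to Theorem~\ref{d:thm:1}, which already identifies \(\eF\) with the Kantorovich extension \(\eF^{M}\) of its \emph{class} \(M\) of \emph{all} Moss liftings; the only remaining work is then to show that the subclass \(M_{\lambda}\subseteq M\) of Moss liftings of arity \(<\lambda\) already suffices, i.e.\ that \(\eF^{M}=\eF^{M_{\lambda}}\).

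One inequality is immediate: since \(M_{\lambda}\subseteq M\) and the Kantorovich extension is defined as an infimum, enlarging the indexing family can only decrease the extension, so \(\eF^{M}=\bigwedge_{\mu\in M}\eF^{\mu}\le\bigwedge_{\mu\in M_{\lambda}}\eF^{\mu}=\eF^{M_{\lambda}}\).

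For the reverse inequality I would exploit \(\lambda\)-accessibility through its equivalent formulation as \(\lambda\)-boundedness (the cited theorem above). Given an arbitrary Moss lifting \(\mu^{\fk}\in M\) with \(\fk\in\ftF\kappa\), boundedness yields a subset inclusion \(m\colon A\to\kappa\) with \(|A|<\lambda\) and an element \(\fl\in\ftF A\) such that \(\fk=\ftF m(\fl)\). Choosing a bijection \(A\cong\alpha\) onto the cardinal \(\alpha=|A|<\lambda\) turns \(m\) into a \emph{monomorphism} \(i\colon\alpha\to\kappa\) between cardinals with \(\fk=\ftF i(\fl)\), so Corollary~\ref{p:23}(2) applies and gives \(\eF^{\mu^{\fk}}=\eF^{\mu^{\fl}}\) with \(\mu^{\fl}\in M_{\lambda}\). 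Consequently \(\eF^{M_{\lambda}}\le\eF^{\mu^{\fl}}=\eF^{\mu^{\fk}}\) for every \(\mu^{\fk}\in M\), and taking the infimum over \(M\) yields \(\eF^{M_{\lambda}}\le\eF^{M}\). Combining the two inequalities with Theorem~\ref{d:thm:1} gives \(\eF=\eF^{M}=\eF^{M_{\lambda}}\). To justify the word ``small'' one finally checks that \(M_{\lambda}\) is genuinely a \emph{set}: there are only set-many cardinals \(\alpha<\lambda\), each \(\ftF\alpha\) is a set, and a Moss lifting of arity \(\alpha\) is determined by an element of \(\ftF\alpha\), so \(M_{\lambda}\) is a set-indexed union of sets.

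The main obstacle is the passage from the boundedness witness to a hypothesis of the right shape for Corollary~\ref{p:23}(2): that corollary is stated for a monomorphism \(i\) between \emph{cardinals}, whereas boundedness only supplies a subset \(A\subseteq\kappa\) of size \(<\lambda\). The point to get right is that re-indexing \(A\) by its cardinal \(\alpha\) along a bijection does not change the associated Kantorovich extension — which is exactly the content of the mono case of Corollary~\ref{p:23}, since the bijection and the inclusion compose to the required monomorphism \(\alpha\to\kappa\). Once this is in place, the remainder is purely formal manipulation of infima.
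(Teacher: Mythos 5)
Your proof is correct and takes essentially the same route as the paper: both reduce to the class of all Moss liftings via Theorem~\ref{d:thm:1} and then use \(\lambda\)-accessibility (in its boundedness form) together with the mono case of Corollary~\ref{p:23} to replace each \(\mu^{\fk}\), \(\fk\in\ftF\kappa\), by a Moss lifting of arity \(\alpha<\lambda\). Your additional care with the bijection \(A\cong\alpha\) and with checking that \(M_{\lambda}\) is genuinely a set merely makes explicit what the paper's terser argument leaves implicit.
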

\begin{proof}
  Let \(\eF \colon \Rels{\V} \to \Rels{\V}\) be a lax extension of an accessible
  functor \(\ftF \colon \SET \to \SET\). By Theorem~\ref{d:thm:1},
  \(\eF=\eF^{M}\) for the class \(M\) of all Moss-liftings. Let
  \(\kappa\geq\lambda\) be a cardinal and \(\fk\in\ftF\kappa\). Since \(\ftF\)
  is \(\lambda\)-accessible, there is some cardinal \(\alpha<\lambda\) with
  inclusion \(i \colon\alpha\to\kappa\) and some \(\fa\in\ftF\alpha\) with
  \(\ftF i(\fa)=\fk\). By Corollary~\ref{p:23},
  \(\eF^{\mu^{\fa}}=\eF^{\mu^{\fk}}\). Therefore \(\eF=\eF^{M_{\lambda}}\).
\end{proof}

\begin{example}
  Every lax extension of the finite powerset functor can be recovered from a countable set of predicate liftings.
\end{example}

\begin{corollary}
  Let \(\ftF\colon \SET\to \SET \) be a \(\lambda\)-accessible functor with a lax extension \(\eF\colon \Rels{\V}\to \Rels{\V}\) such that, for all sets \(X\) with \(|X|<\lambda\), the \(\V\)-category \((\ftF X,\eF1_{X})\) is separated. Then the set
  \begin{displaymath}
    M_{\lambda}=\{\text{all \(\alpha\)-ary Moss liftings, \(\alpha<\lambda\)}\}
  \end{displaymath}
  is separating.
\end{corollary}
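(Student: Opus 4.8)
The plan is to verify directly that the cone $(\overline{\mu}_X \colon \ftF X \to \DD{\arity(\mu)}X)_{\mu \in M_\lambda}$ is jointly injective for every set $X$, using $\lambda$-boundedness to reduce an arbitrary $X$ to a set of size below $\lambda$, where the separation hypothesis applies. So I fix a set $X$ and elements $\fx, \fy \in \ftF X$ with $\overline{\mu}_X(\fx) = \overline{\mu}_X(\fy)$ for every $\mu \in M_\lambda$, and aim to show $\fx = \fy$.

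First I would descend to a small set. Since $\ftF$ is $\lambda$-accessible, hence $\lambda$-bounded, there are subsets of $X$ witnessing the boundedness of $\fx$ and of $\fy$; their union still has cardinality below $\lambda$ because $\lambda$ is regular. This produces an injection $m \colon A \to X$ with $|A| < \lambda$ together with elements $\fx', \fy' \in \ftF A$ satisfying $\ftF m(\fx') = \fx$ and $\ftF m(\fy') = \fy$.

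The crux is to transport the hypothesis from $X$ down to $A$, that is, to prove $\overline{\mu}_A(\fx') = \overline{\mu}_A(\fy')$ for every $\mu \in M_\lambda$. Fixing $\mu$ of arity $\alpha$, naturality of $\overline{\mu}$ along $m$ gives $\overline{\mu}_X(\ftF m(\fx'))(g') = \overline{\mu}_A(\fx')(g' \cdot m)$ for every $g' \colon X \relto \alpha$; unravelled, this is the identity $\eF g'(\ftF m(\fx'), \fk) = \eF(g' \cdot m)(\fx', \fk)$, an instance of Proposition~\ref{p:18}(\ref{p:16}). Because $m$ is injective, every $\V$-relation $g \colon A \relto \alpha$ factors as $g = g' \cdot m$ for some $g' \colon X \relto \alpha$ (extend $g$ by $\bot$ outside the image of $m$). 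Evaluating the assumption $\overline{\mu}_X(\fx) = \overline{\mu}_X(\fy)$ at such $g'$ then yields $\overline{\mu}_A(\fx')(g) = \overline{\mu}_A(\fy')(g)$ for all $g$, as required.

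Finally I would invoke the separation hypothesis at $A$. By Proposition~\ref{p:100} we have $\eF = \eF^{M_\lambda}$, so by Examples~\ref{p:47}(\ref{d:exs:1}),
\[
  \eF 1_A (\fx', \fy')
  = \bigwedge_{\mu \in M_\lambda} (\eDD{\arity(\mu)}1_A)(\overline{\mu}_A(\fx'), \overline{\mu}_A(\fy')).
\]
Since $\overline{\mu}_A(\fx') = \overline{\mu}_A(\fy')$ for every $\mu$, each factor has the form $(\eDD{\arity(\mu)}1_A)(\Phi, \Phi) \geq k$ by reflexivity of the $\V$-category $(\DD{\arity(\mu)}A, \eDD{\arity(\mu)}1_A)$; hence $k \le \eF 1_A(\fx', \fy')$, and symmetrically $k \le \eF 1_A(\fy', \fx')$. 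As $|A| < \lambda$, the $\V$-category $(\ftF A, \eF 1_A)$ is separated by assumption, forcing $\fx' = \fy'$ and therefore $\fx = \ftF m(\fx') = \ftF m(\fy') = \fy$. I expect the main obstacle to be the third paragraph: aligning the naturality square of $\overline{\mu}$ with the surjectivity of restriction of $\V$-relations along the injection $m$, so that the small-set separation hypothesis can be brought to bear; the remaining computations are routine given the results already established.
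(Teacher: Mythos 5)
Your proof is correct, and in substance it coincides with the paper's argument: the paper proves this corollary in one line (``Same as for Corollary~\ref{d:cor:1}, using Proposition~\ref{p:100}''), that is, it combines \(\eF=\eF^{M_{\lambda}}\) with the computation of Kantorovich extensions through \(\eDD{\kappa}\), exactly as in your final paragraph. Where you go beyond the paper's terse proof is the descent step: since separation is assumed only for sets of size \(<\lambda\), the argument of Corollary~\ref{d:cor:1} cannot be run at an arbitrary \(X\) as stated, and your reduction along an injection \(m\colon A\to X\) with \(|A|<\lambda\) --- using \(\lambda\)-boundedness, naturality of \(\overline{\mu}\), and the fact that every \(\V\)-relation \(g\colon A\relto\alpha\) extends by \(\bot\) to some \(g'\colon X\relto\alpha\) with \(g=g'\cdot m\) --- supplies precisely the reduction the paper leaves implicit. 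An equivalent shortcut, closer to the contrapositive shape of Corollary~\ref{d:cor:1}, would be to transport separation upward rather than the hypothesis downward: by Proposition~\ref{p:18} and injectivity of \(m\) one has \(\eF 1_{A}=(\ftF m)^{\circ}\cdot\eF 1_{X}\cdot\ftF m\), so separation of \((\ftF A,\eF 1_{A})\) for all \(|A|<\lambda\) already forces separation of \((\ftF X,\eF 1_{X})\) for every \(X\), after which the proof of Corollary~\ref{d:cor:1} applies verbatim with \(M_{\lambda}\) in place of the class of all Moss liftings; your variant is equally valid, and all individual steps (the naturality identity, the extension of \(g\) along \(m\), and reflexivity of \(\eDD{\alpha}1_{A}\) via \ref{p:0}) check out.
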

\begin{proof}
  Same as for Corollary~\ref{d:cor:1}, using Proposition~\ref{p:100}.
\end{proof}

\begin{proposition}
  Let \(\eF_\ftF \colon \Rels{\V} \to \Rels{\V}\) be a lax extension of a functor \(\ftF \colon \SET \to \SET\) and \(i \colon \ftG\to \ftF\) be a natural transformation.
  Furthermore, let \(\eG_i \colon \Rels{\V} \to \Rels{\V}\) be the initial lax extension with respect to \(i \colon \ftG\to \ftF\).
  If \(\eF_\ftF\) is \(\lambda\)-small, then \(\eG_i\) is \(\lambda\)-small.
\end{proposition}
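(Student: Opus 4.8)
The plan is to reduce the whole statement to Proposition~\ref{d:rem:2}, which already identifies the class of predicate liftings that represents an initial extension along a natural transformation. First I would unfold the hypothesis that \(\eF\) is \(\lambda\)-small: by definition this means there is a \emph{set} \(M\) of monotone predicate liftings, each of arity strictly below \(\lambda\), such that \(\eF = \eF^{M}\).

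Next I would feed this \(M\) into Proposition~\ref{d:rem:2}. Since \(\eG\) is by hypothesis the initial lax extension of \(\ftG\) with respect to \(i \colon \ftG \to \ftF\), and \(\eF = \eF^{M}\), that proposition gives \(\eG = \eG^{M_{i}}\), where \(M_{i} = \{\ftP_\V i \cdot \mu \mid \mu \in M\}\). It then only remains to check that \(M_{i}\) witnesses \(\lambda\)-smallness of \(\eG\), and this splits into two immediate observations. First, \(M_{i}\) is again a \emph{set}, being the image of the set \(M\) under the assignment \(\mu \mapsto \ftP_\V i \cdot \mu\). Second, each \(\ftP_\V i \cdot \mu\) is a predicate lifting of \(\ftG\) of the \emph{same} arity as \(\mu\): postcomposing with \(\ftP_\V i \colon \ftP_\V \ftF \to \ftP_\V \ftG\) leaves the domain \(\ftP_{\V^{\arity(\mu)}}\) untouched, so \(\ftP_\V i \cdot \mu\) is again \(\arity(\mu)\)-ary, hence of arity \(< \lambda\). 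Consequently \(\eG = \eG^{M_{i}}\) presents \(\eG\) as the Kantorovich extension of a set of predicate liftings of arity below \(\lambda\), which is exactly \(\lambda\)-smallness.

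I do not expect a substantial obstacle, precisely because the real content — that the initial extension along \(i\) is the Kantorovich extension of the transported class \(M_{i}\) — is already carried by Proposition~\ref{d:rem:2}; everything left is bookkeeping showing that passing from \(M\) to \(M_{i}\) neither enlarges the index set nor raises arities. The one point worth a line is that \(\eG^{M_{i}}\) is a genuine lax extension, i.e.\ that each \(\ftP_\V i \cdot \mu\) is still monotone as required by Theorem~\ref{p:17}; this is automatic, since the components of \(\ftP_\V i\) act by precomposition with \(i_X\) and are therefore monotone for the pointwise order, so \(\ftP_\V i \cdot \mu\) is a composite of monotone maps. With this remark in place the argument is complete.
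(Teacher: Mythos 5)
Your proposal is correct and takes exactly the paper's route: the paper's entire proof reads ``Follows from Proposition~\ref{d:rem:2}'', and your argument simply spells out the bookkeeping that this citation leaves implicit, namely that \(M_{i}\) is again a set, that postcomposition with \(\ftP_{\V}i\) preserves arities, and that each \(\ftP_{\V}i\cdot\mu\) remains monotone. All of these checks are accurate, so your write-up is a faithful (and slightly more detailed) version of the intended proof.
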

\begin{proof}
  Follows from Proposition~\ref{d:rem:2}.
\end{proof}

We can relax slightly the condition of Proposition~\ref{p:100} by taking advantage of the structure of \(\V\)-category.

\begin{definition}
  Let \((X,a)\) be a \(\V\)-category.
  A map \(i \colon A\to X\) is \df{dense} in \((X,a)\) if \(a = a \cdot i \cdot i^\circ \cdot a\).
\end{definition}

\begin{remark}
  A map \(i \colon A \to X\) is dense in \((X,a)\) if and only if the image of
  \(A\) is dense in \((X,a)\) with respect to the closure operator
  \(\overline{(-)}\) on \(\Cats{\V}\) introduced by \citet{HT10}. We recall that
  for every \(x \in X\) and \(M \subseteq X\),
  \begin{displaymath}
    x \in \overline{M} \iff k \le \bigvee_{z\in M} a(x,z) \otimes a(z,x).
  \end{displaymath}
  This closure operator generalizes the one induced by the usual topology
  associated with a metric space. In fact, one can show that for categories
  enriched in a \emph{value quantale}, such as metric spaces, this closure
  operator coincides with the one induced by the symmetric topology considered
  by \citet{Fla92}.
\end{remark}

\begin{definition}
  A natural transformation \(i \colon \ftG\to \ftF\) between functors
  \(\ftG,\ftF \colon\SET\to\SET\) is called \df{dense} with respect to a lax extension
  \(\eF \colon \Rels{\V} \to \Rels{\V}\) of \(\ftF\) if, for each set
  \(X\), \(i_{X}\colon \ftG X\to\ftF X\) is dense in the
  \(\V\)-category \((\ftF X,\eF 1_{X})\).
\end{definition}

The following lemma records that dense maps are compatible with Moss liftings (see Theorem~\ref{p:75}), in the sense that to determine the action of a \(\kappa\)-ary Moss lifting it suffices to focus on a dense subset of \((\ftF \kappa, \eF 1_\kappa)\).

\begin{lemma}
  \label{p:29}
  Let \(\eF \colon \Rels{\V} \to \Rels{\V}\) be a lax extension of a functor \(\ftF \colon \SET \to \SET\).
  Furthermore, let \(\varphi \colon Y\relto\kappa\) be a \(\V\)-relation, \(\fk\) be an element of \(\ftF\kappa\), and \(i \colon A \to \ftF\kappa\) be a dense map in \((\ftF\kappa,\eF 1_{\kappa})\).
  Then %
  \begin{displaymath}
    \mu^{\fk}(\varphi)= \mu^\fk(1_\kappa) \cdot i \cdot i^\circ \cdot \eF(\varphi).
  \end{displaymath}
\end{lemma}

\begin{theorem}
  Let \(i \colon \ftG\to \ftF\) be a natural transformation between functors
  \(\ftG,\ftF \colon\SET\to\SET\), and let \(\eF \colon \Rels{\V} \to \Rels{\V}\) be a lax
  extension of \(\ftF\) such that \(i\) is dense with respect to \(\eF\).
  If \(\ftG\) is \(\lambda\)-accessible, then \(\eF\) is \(\lambda\)-small.
\end{theorem}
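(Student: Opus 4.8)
The plan is to realise $\eF$ as the Kantorovich extension of an explicit \emph{set} of Moss liftings of arity below $\lambda$. Concretely, I would set
\[
  M_\lambda = \{\,\mu^{i_\alpha(\fa)} \mid \alpha < \lambda,\ \fa \in \ftG\alpha\,\},
\]
which is a genuine set of monotone (by Proposition~\ref{p:64}) $\alpha$-ary Moss liftings of $\eF$ with $\alpha < \lambda$, and aim to prove $\eF = \eF^{M_\lambda}$; by definition this exhibits $\eF$ as $\lambda$-small. The inequality $\eF \leq \eF^{M_\lambda}$ is immediate from Corollary~\ref{p:27}. For the reverse inequality I would invoke Theorem~\ref{d:thm:1}, which writes $\eF = \eF^M$ for the class $M$ of \emph{all} Moss liftings; it then suffices to show $\eF^{M_\lambda} \leq \eF^{\mu^\fk}$ for an arbitrary Moss lifting $\mu^\fk$ with $\fk \in \ftF\kappa$ and $\kappa$ possibly large.

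The heart of the argument is a density step that, for fixed $\fk \in \ftF\kappa$, bounds $\eF^{\mu^\fk}$ from below by the Kantorovich extension of the arity-$\kappa$ Moss liftings coming from $\ftG\kappa$, that is, of $M_G^\kappa = \{\mu^{i_\kappa(\fl)} \mid \fl \in \ftG\kappa\}$. Since $i$ is dense with respect to $\eF$, the map $i_\kappa \colon \ftG\kappa \to \ftF\kappa$ is dense in $(\ftF\kappa, \eF 1_\kappa)$, so Lemma~\ref{p:29} factors every value of $\mu^\fk$ as $\mu^\fk(\varphi) = c \cdot \nu(\varphi)$, where $c = \mu^\fk(1_\kappa) \cdot i_\kappa \colon \ftG\kappa \relto 1$ and $\nu(\varphi) = i_\kappa^\circ \cdot \eF\varphi$. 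For a relation $r \colon X \relto Y$ and a $\V$-relation $g \colon Y \relto \kappa$, Proposition~\ref{p:2}(\ref{p:14}) then gives
\[
  \nu(g) \multimapdot \nu(g \cdot r) \leq (c \cdot \nu(g)) \multimapdot (c \cdot \nu(g\cdot r)) = \mu^\fk(g) \multimapdot \mu^\fk(g \cdot r),
\]
while Proposition~\ref{p:28}, applied over the codomain $\ftG\kappa$, identifies the left-hand side with $\bigwedge_{\fl \in \ftG\kappa} \mu^{i_\kappa(\fl)}(g) \multimapdot \mu^{i_\kappa(\fl)}(g\cdot r)$ (using $\fl^\circ \cdot i_\kappa^\circ = (i_\kappa(\fl))^\circ$). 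This last infimum dominates $\eF^{M_G^\kappa} r$; taking the infimum over all $g$ yields $\eF^{M_G^\kappa} \leq \eF^{\mu^\fk}$.

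It then remains to replace the arity-$\kappa$ liftings in $M_G^\kappa$ by liftings in $M_\lambda$, which is the routine part mirroring Proposition~\ref{p:100}. Given $\fl \in \ftG\kappa$, the $\lambda$-accessibility (equivalently, $\lambda$-boundedness) of $\ftG$ provides a monomorphism $m \colon \alpha \to \kappa$ with $\alpha < \lambda$ and some $\fa \in \ftG\alpha$ with $\fl = \ftG m(\fa)$; naturality of $i$ gives $i_\kappa(\fl) = \ftF m(i_\alpha(\fa))$, and Corollary~\ref{p:23} then yields $\eF^{\mu^{i_\kappa(\fl)}} = \eF^{\mu^{i_\alpha(\fa)}}$ with $\mu^{i_\alpha(\fa)} \in M_\lambda$. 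Hence $\eF^{M_\lambda} \leq \eF^{\mu^{i_\kappa(\fl)}}$ for every $\fl$, so $\eF^{M_\lambda} \leq \eF^{M_G^\kappa} \leq \eF^{\mu^\fk}$. Combining with Theorem~\ref{d:thm:1} gives $\eF^{M_\lambda} \leq \eF$, and together with the easy inequality we conclude $\eF = \eF^{M_\lambda}$.

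I expect the main obstacle to be the density step of the second paragraph: verifying that the factorisation from Lemma~\ref{p:29} interacts correctly with the two lift identities (Propositions~\ref{p:2}(\ref{p:14}) and~\ref{p:28}) so that the single large-arity lifting $\mu^\fk$ is squeezed between the family $M_G^\kappa$ and $\eF$. Once this squeeze is established, the passage to small arity is purely formal and reuses the bookkeeping of Proposition~\ref{p:100}.
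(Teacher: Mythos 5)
Your proposal is correct and follows essentially the same route as the paper's proof: the density factorisation of Lemma~\ref{p:29} combined with Propositions~\ref{p:2}(\ref{p:14}) and~\ref{p:28} to squeeze \(\eF^{\mu^{\fk}}\) from below by the liftings \(\mu^{i_{\kappa}(\fl)}\) with \(\fl\in\ftG\kappa\), followed by the \(\lambda\)-accessibility bookkeeping via Corollary~\ref{p:23} and the conclusion through Theorem~\ref{d:thm:1}. The only difference is presentational: you spell out explicitly the final reduction to the set \(M_{\lambda}\) and the use of Corollary~\ref{p:27}, which the paper compresses into the closing sentence ``the claim follows from the fact that \(\ftG\) is \(\lambda\)-accessible.''
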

\begin{proof}
  Let \(r \colon X \relto \kappa\) be a \(\V\)-relation, \(\fk\in\ftF\kappa\),
  and consider \(i_{\kappa} \colon \ftG \kappa \to \ftF \kappa\). Then, by
  Lemma~\ref{p:29}, for all \(r \colon X\relto Y\),
  \begin{align*}
    \eF^{\mu^\fk}(r)
    & = \bigwedge_{g \colon Y \relto \kappa} \mu^\fy (g) \multimapdot \mu^\fy (g \cdot r) \\
    & = \bigwedge_{g \colon Y \relto \kappa}
      \left(\mu^\fy(1_\kappa) \cdot i_{\kappa} \cdot i_{\kappa}^\circ \cdot \eF(g) \right)
      \multimapdot \left(\mu^\fy(1_\kappa) \cdot i_{\kappa} \cdot i_{\kappa}^\circ \cdot \eF(g \cdot r) \right).
  \end{align*}
  Hence, by Propositions~\ref{p:2}(\ref{p:14}) and~\ref{p:28},
  \begin{align*}
    \eF^{\mu^\fk}(r)
    & \geq \bigwedge_{g \colon Y \relto \kappa}
      \left( i_{\kappa}^\circ \cdot \eF(g) \right) \multimapdot
      \left(i_{\kappa}^\circ \cdot \eF(g \cdot r) \right) \\
    & = \bigwedge_{\fl \in \ftG \kappa} \bigwedge_{g \colon Y \relto\kappa}
      \mu^{i(\fl)} (g) \blackright \mu^{i(\fl)}(g \cdot r) \\
    & = \bigwedge_{\fl \in \ftG \kappa} \eF^{\mu^{i(\fl)}}(r).
  \end{align*}
  Now, the claim follows from the fact that \(\ftG\) is \(\lambda\)-accessible.
\end{proof}

Finally, we obtain a generalization of Theorem~\ref{p:60} mentioned in the introduction.

\begin{corollary}
  \label{p:33}
  Let \(\eF \colon \Rels{\V} \to \Rels{\V}\) be a lax extension.
  If there is a regular cardinal \(\lambda\) such that the natural transformation \(\ftF_\lambda \hookrightarrow \ftF\) is dense, then \(\eF\) is
  \(\lambda\)-small.
\end{corollary}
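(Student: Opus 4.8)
The plan is to obtain this as an immediate instance of the theorem immediately preceding it. Concretely, I would instantiate that theorem at \(\ftG = \ftF_\lambda\), the \(\lambda\)-accessible subfunctor of \(\ftF\) introduced in Section~\ref{sec:lax->pl}, and take \(i\) to be the inclusion natural transformation \(\ftF_\lambda \hookrightarrow \ftF\). The hypothesis of the corollary states exactly that this inclusion is dense with respect to \(\eF\), so the density requirement of the theorem is met verbatim, with no further verification needed.

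The only remaining ingredient is that \(\ftF_\lambda\) is genuinely \(\lambda\)-accessible. This is built into the construction recalled in Section~\ref{sec:lax->pl}: every \(\fx \in \ftF_\lambda X\) lies in the image of \(\ftF m\) for some inclusion \(m \colon A \to X\) with \(|A| < \lambda\), so \(\ftF_\lambda\) is \(\lambda\)-bounded, and hence \(\lambda\)-accessible by the boundedness-equals-accessibility characterisation of \citep{AMSW19}. With both hypotheses of the preceding theorem in place, I would conclude directly that \(\eF\) is \(\lambda\)-small, which is the assertion.

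There is essentially no obstacle to overcome here, since all the substantive work lives in the preceding theorem: its density hypothesis is transported unchanged from the corollary's assumption, and its accessibility hypothesis is automatic for the canonical subfunctor \(\ftF_\lambda\). The single point worth stating cleanly is the passage from ``\(\ftF_\lambda\) is a subfunctor supported on sets of size \(<\lambda\)'' to ``\(\ftF_\lambda\) is \(\lambda\)-accessible'', which is precisely where the boundedness characterisation is invoked; everything else is a direct specialisation.
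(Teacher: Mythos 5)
Your proposal is correct and matches the paper's intended derivation exactly: the corollary is stated as an immediate instance of the preceding theorem with \(\ftG = \ftF_\lambda\) and \(i\) the inclusion, the density hypothesis being assumed verbatim. Your extra justification that \(\ftF_\lambda\) is \(\lambda\)-accessible via \(\lambda\)-boundedness and \citep{AMSW19} is sound (the paper simply asserts this fact, citing \citep{AGT10}), so nothing is missing.
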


\section{Conclusions}

We have argued that the language of relations is a natural system to
express the connection between predicate liftings and lax extensions
by showing that it provides a pointfree perspective in which many
fundamental notions and results arise naturally and proofs become very
elementary.  Using this perspective, we were able to remove several
technical restrictions that feature centrally in previous approaches
which were confined to classical or $[0,1]$-valued relations and accessible
\(\SET\)-functors \citep{Lea08,MV15,WS20}.  Indeed, our
constructions and results are valid for arbitrary \(\SET\)-functors
and lax extensions to categories of quantale-enriched relations.  In
particular, we have introduced a new way of extracting predicate
liftings from a lax extension that is independent of functor
presentations, and indeed we provide an intrinsic characterization of
predicate liftings that are \emph{induced} in this sense by a given
lax extension. This leads to a very simple description of Moss
liftings, which has made it straightforward to show -- in quantalic
generality -- that every lax extension is induced by its
\emph{class} of Moss liftings, and that the role of accessibility is
to ensure that it suffices to consider a \emph{set} of Moss liftings.
Consequently, we have obtained the fact that every lax extension of a
\(\SET\)-functor is an initial extension of canonical extensions of
generalized monotone neighbourhood functors, as well as a
generalization of the fact that the finitary functors that admit an
identity-preserving lax extension are precisely the ones that admit a
separating set of monotone predicate liftings.  Furthermore, we have
lifted the result that every finitarily separable $[0,1]$-valued lax
extension of a \(\SET\)-functor is induced by a set of
predicate liftings \citep{WS20} to quantalic generality. Here, we have
avoided restrictions on the quantale that are needed when classical
notions of density~\citep{Flagg97} are used (like in recent results on
quantalic van Benthem and Hennessy-Milner theorems~\citep{WS21}), by
employing instead a categorical closure operator available on all
quantale-enriched categories.

\section*{Acknowledgements}

The first author acknowledges support by the Deutsche Forschungsgemeinschaft
(DFG, German Research Foundation) under the project \emph{A High Level Language
  for Programming and Specifying Multi-Effect Algorithms} (GO~2161/1-2, SCHR
1118/8-2). The second author acknowledges support by the Center for Research and
Development in Mathematics and Applications (CIDMA) through the Portuguese
Foundation for Science and Technology FCT -- Fundação para a Ciência e a
Tecnologia (UIDB/04106/2020). The third author acknowledges support by the DFG
-- project numbers 259234802 and %
419850228. %
The fourth author acknowledges support by the DFG -- project number
434050016. %

\bibliographystyle{apalike}
\bibliography{normal_lax}

\providecommand{\noopsort}[1]{}
\begin{thebibliography}{}

\bibitem[Ad{\'{a}}mek et~al., 2010]{AGT10}
Ad{\'{a}}mek, J., Gumm, H.~P., and Trnkov{\'{a}}, V. (2010).
\newblock Presentation of {S}et {F}unctors: {A} {C}oalgebraic {P}erspective.
\newblock {\em Journal of Logic and Computation}, 20(5):991--1015.

\bibitem[Ad{\'{a}}mek et~al., 2015]{AMMU15}
Ad{\'{a}}mek, J., Milius, S., Moss, L.~S., and Urbat, H. (2015).
\newblock On finitary functors and their presentations.
\newblock {\em Journal of Computer and System Sciences}, 81(5):813--833.

\bibitem[Ad{\'a}mek et~al., 2019]{AMSW19}
Ad{\'a}mek, J., Milius, S., Sousa, L., and Wissmann, T. (2019).
\newblock On {F}initary {F}unctors.
\newblock {\em Theory and Applications of Categories}, 34(35):1134--1164.

\bibitem[Alsina et~al., 2006]{AFS06}
Alsina, C., Frank, M.~J., and Schweizer, B. (2006).
\newblock {\em Associative functions. Triangular norms and copulas}.
\newblock Hackensack, NJ: World Scientific.

\bibitem[C{\^{\i}}rstea et~al., 2011]{CKP+11}
C{\^{\i}}rstea, C., Kurz, A., Pattinson, D., Schr{\"{o}}der, L., and Venema, Y.
  (2011).
\newblock Modal {L}ogics are {C}oalgebraic.
\newblock {\em Computer Journal}, 54(1):31--41.

\bibitem[Clementino and Hofmann, 2004]{CH04}
Clementino, M.~M. and Hofmann, D. (2004).
\newblock On extensions of lax monads.
\newblock {\em Theory and Applications of Categories}, 13(3):41--60.

\bibitem[Flagg, 1997]{Flagg97}
Flagg, R. (1997).
\newblock Quantales and continuity spaces.
\newblock {\em Algebra Univ.}, 37(3):257--276.

\bibitem[Flagg, 1992]{Fla92}
Flagg, R.~C. (1992).
\newblock Completeness in continuity spaces.
\newblock In Seely, R. A.~G., editor, {\em {Category Theory 1991: Proceedings
  of an International Summer Category Theory Meeting, held June 23-30, 1991}},
  volume~13 of {\em CMS Conference Proceedings}. American Mathematical Society.

\bibitem[Forster et~al., 2023]{FGH+23}
Forster, J., Goncharov, S., Hofmann, D., Nora, P., Schr{\"{o}}der, L., and
  Wild, P. (2023).
\newblock Quantitative hennessy-milner theorems via notions of density.
\newblock In Klin, B. and Pimentel, E., editors, {\em 31st {EACSL} Annual
  Conference on Computer Science Logic, {CSL} 2023, February 13-16, 2023,
  Warsaw, Poland}, volume 252 of {\em LIPIcs}, pages 22:1--22:20. Schloss
  Dagstuhl - Leibniz-Zentrum f{\"{u}}r Informatik.

\bibitem[Gavazzo, 2018]{Gav18}
Gavazzo, F. (2018).
\newblock Quantitative behavioural reasoning for higher-order effectful
  programs: Applicative distances.
\newblock In {\em Logic in Computer Science, {LICS} 2018}, pages 452--461.
  {ACM}.

\bibitem[Goncharov et~al., 2023]{GHN+23}
Goncharov, S., Hofmann, D., Nora, P., Schr{\"{o}}der, L., and Wild, P. (2023).
\newblock Kantorovich functors and characteristic logics for behavioural
  distances.
\newblock In Kupferman, O. and Sobocinski, P., editors, {\em Foundations of
  Software Science and Computation Structures - 26th International Conference,
  FoSSaCS 2023, Held as Part of the European Joint Conferences on Theory and
  Practice of Software, {ETAPS} 2023, Paris, France, April 22-27, 2023,
  Proceedings}, volume 13992 of {\em Lecture Notes in Computer Science}, pages
  46--67. Springer.

\bibitem[Gumm, 2005]{Gum05}
Gumm, H.~P. (2005).
\newblock From {T}-{C}oalgebras to {F}ilter {S}tructures and {T}ransition
  {S}ystems.
\newblock In Fiadeiro, J.~L., Harman, N., Roggenbach, M., and Rutten, J.,
  editors, {\em {A}lgebra and {C}oalgebra in {C}omputer {S}cience}, pages
  194--212. Springer Berlin Heidelberg.

\bibitem[Hofmann et~al., 2014]{HST14}
Hofmann, D., Seal, G.~J., and Tholen, W., editors (2014).
\newblock {\em Monoidal {T}opology. {A} {C}ategorical {A}pproach to {O}rder,
  {M}etric, and {T}opology}, volume 153 of {\em Encyclopedia of Mathematics and
  its Applications}.
\newblock Cambridge University Press, Cambridge.
\newblock Authors: Maria Manuel Clementino, Eva Colebunders, Dirk Hofmann,
  Robert Lowen, Rory Lucyshyn-Wright, Gavin J.\ Seal and Walter Tholen.

\bibitem[Hofmann and Tholen, 2010]{HT10}
Hofmann, D. and Tholen, W. (2010).
\newblock {L}awvere completion and separation via closure.
\newblock {\em Applied Categorical Structures}, 18(3):259--287.

\bibitem[Hughes and Jacobs, 2004]{HJ04}
Hughes, J. and Jacobs, B. (2004).
\newblock Simulations in coalgebra.
\newblock {\em Theoretical Computer Science}, 327(1):71--108.

\bibitem[Kelly, 1982]{Kel82}
Kelly, G.~M. (1982).
\newblock {\em Basic concepts of enriched category theory}, volume~64 of {\em
  London Mathematical Society Lecture Note Series}.
\newblock Cambridge University Press, Cambridge.
\newblock {Republished in: Reprints in Theory and Applications of Categories.
  No.~10 (2005), 1--136}.

\bibitem[K{\"{o}}nig and Mika{-}Michalski, 2018]{KonigMikaMichalski18}
K{\"{o}}nig, B. and Mika{-}Michalski, C. (2018).
\newblock (metric) bisimulation games and real-valued modal logics for
  coalgebras.
\newblock In Schewe, S. and Zhang, L., editors, {\em Concurrency Theory,
  {CONCUR} 2018}, volume 118 of {\em LIPIcs}, pages 37:1--37:17. Schloss
  Dagstuhl -- Leibniz-Zentrum f{\"{u}}r Informatik.

\bibitem[Kurz and Leal, 2009]{KL09}
Kurz, A. and Leal, R.~A. (2009).
\newblock Equational {C}oalgebraic {L}ogic.
\newblock In Abramsky, S., Mislove, M.~W., and Palamidessi, C., editors, {\em
  Proceedings of the 25th Conference on Mathematical Foundations of Programming
  Semantics, {MFPS} 2009, Oxford, UK, April 3-7, 2009}, volume 249 of {\em
  Electronic Notes in Theoretical Computer Science}, pages 333--356. Elsevier.

\bibitem[Kurz and Velebil, 2016]{KV16}
Kurz, A. and Velebil, J. (2016).
\newblock Relation lifting, a survey.
\newblock {\em Journal of Logical and Algebraic Methods in Programming},
  85(4):475--499.

\bibitem[Lawvere, 1973]{Law73}
Lawvere, F.~W. (1973).
\newblock Metric spaces, generalized logic, and closed categories.
\newblock {\em Rendiconti del Seminario Matemàtico e Fisico di Milano},
  43(1):135--166.
\newblock {Republished in: Reprints in Theory and Applications of Categories,
  No. 1 (2002), 1--37}.

\bibitem[Leal, 2008]{Lea08}
Leal, R.~A. (2008).
\newblock Predicate {L}iftings {V}ersus {N}abla {M}odalities.
\newblock In Ad{\'{a}}mek, J. and Kupke, C., editors, {\em Proceedings of the
  Ninth Workshop on Coalgebraic Methods in Computer Science, {CMCS} 2008,
  Budapest, Hungary, April 4-6, 2008}, volume 203(5) of {\em Electronic Notes
  in Theoretical Computer Science}, pages 195--220. Elsevier.

\bibitem[MacLane, 1998]{Mac98}
MacLane, S. (1998).
\newblock {\em Categories for the working mathematician}, volume~5 of {\em
  Graduate Texts in Mathematics}.
\newblock Springer, New York, 2 edition.

\bibitem[Manes, 2002]{Man02}
Manes, E.~G. (2002).
\newblock Taut monads and {$T0$}-spaces.
\newblock {\em Theoretical Computer Science}, 275(1-2):79--109.

\bibitem[Marti and Venema, 2015]{MV15}
Marti, J. and Venema, Y. (2015).
\newblock Lax extensions of coalgebra functors and their logic.
\newblock {\em Journal of Computer and System Sciences}, 81(5):880--900.

\bibitem[Pattinson, 2004]{Pat04}
Pattinson, D. (2004).
\newblock Expressive {L}ogics for {C}oalgebras via {T}erminal {S}equence
  {I}nduction.
\newblock {\em Notre Dame Journal of Formal Logic}, 45(1):19--33.

\bibitem[Rutten, 1998]{Rut98}
Rutten, J. (1998).
\newblock Relators and {M}etric {B}isimulations.
\newblock {\em Electronic Notes in Theoretical Computer Science}, 11:252--258.

\bibitem[Schr{\"{o}}der, 2008]{Sch08}
Schr{\"{o}}der, L. (2008).
\newblock Expressivity of coalgebraic modal logic: The limits and beyond.
\newblock {\em Theoretical Computer Science}, 390(2-3):230--247.

\bibitem[Schr{\"{o}}der and Pattinson, 2010]{SchroderPattinon10}
Schr{\"{o}}der, L. and Pattinson, D. (2010).
\newblock Rank-1 modal logics are coalgebraic.
\newblock {\em J.\ Log.\ Comput.}, 20(5):1113--1147.

\bibitem[Schubert and Seal, 2008]{SS08}
Schubert, C. and Seal, G.~J. (2008).
\newblock Extensions in the theory of lax algebras.
\newblock {\em Theory and Applications of Categories}, 21(7):118--151.

\bibitem[Seal, 2005]{Sea05}
Seal, G.~J. (2005).
\newblock Canonical and op-canonical lax algebras.
\newblock {\em Theory and Applications of Categories}, 14(10):221--243.

\bibitem[Stubbe, 2006]{Stu06}
Stubbe, I. (2006).
\newblock Categorical structures enriched in a quantaloid: tensored and
  cotensored categories.
\newblock {\em Theory and Applications of Categories}, 16(14):283--306.

\bibitem[Stubbe, 2014]{Stu14}
Stubbe, I. (2014).
\newblock An introduction to quantaloid-enriched categories.
\newblock {\em Fuzzy Sets and Systems}, 256:95--116.
\newblock Special Issue on Enriched Category Theory and Related Topics
  (Selected papers from the 33\textsuperscript{rd} Linz Seminar on Fuzzy Set
  Theory, 2012).

\bibitem[Wild and Schr{\"{o}}der, 2020]{WS20}
Wild, P. and Schr{\"{o}}der, L. (2020).
\newblock Characteristic {L}ogics for {B}ehavioural {M}etrics via {F}uzzy {L}ax
  {E}xtensions.
\newblock In Konnov, I. and Kov{\'{a}}cs, L., editors, {\em 31st International
  Conference on Concurrency Theory, {CONCUR} 2020, September 1-4, 2020, Vienna,
  Austria (Virtual Conference)}, volume 171 of {\em LIPIcs}, pages 27:1--27:23.
  Schloss Dagstuhl - Leibniz-Zentrum f{\"{u}}r Informatik.

\bibitem[Wild and Schr{\"{o}}der, 2021]{WS21}
Wild, P. and Schr{\"{o}}der, L. (2021).
\newblock A {Q}uantified {C}oalgebraic van {B}enthem {T}heorem.
\newblock In Kiefer, S. and Tasson, C., editors, {\em Foundations of Software
  Science and Computation Structures, {FOSSACS} 2021}, volume 12650 of {\em
  LNCS}, pages 551--571. Springer.

\bibitem[Wild and Schr{\"{o}}der, 2022]{WildSchroder22}
Wild, P. and Schr{\"{o}}der, L. (2022).
\newblock Characteristic logics for behavioural hemimetrics via fuzzy lax
  extensions.
\newblock {\em Log.\ Methods Comput.\ Sci.}, 18(2).

\end{thebibliography}

\end{document}